\documentclass[12pt]{extarticle}
\usepackage[utf8]{inputenc}
\usepackage[T1]{fontenc}

\usepackage{amssymb,amsfonts}
\usepackage{amsthm}
\usepackage{graphicx}
\usepackage[all,arc]{xy}
\usepackage[colorlinks=true, allcolors=myblue]{hyperref}
\usepackage{enumerate}
\usepackage{bbm}
\usepackage{dsfont}
\usepackage{mathrsfs}
\usepackage{import}
\usepackage[utf8]{inputenc}
\usepackage{tikz}
\usepackage{xspace}
\usepackage{ytableau}
\usepackage{subcaption}
\usepackage{xcolor}
\definecolor{myblue}{rgb}{0, 0.4470, 0.7410}
\definecolor{myorange}{rgb}{0.8500, 0.3250, 0.0980}
\definecolor{mygreen}{rgb}{0.4660, 0.6740, 0.1880}
\definecolor{mypurple}{rgb}{0.4940, 0.1840, 0.5560}
\definecolor{myteal}{rgb}{0.0, 0.5, 0.5}
\definecolor{mycyan}{rgb}{0.0,0.45,0.65}
\definecolor{mybordeaux}{rgb}{0.70,0.25,0.32}

\usepackage[left=1in,top=1in,right=1in,bottom=1in]{geometry}
\usepackage{mathtools}
\usepackage{comment}
\usepackage{makecell} 

\newtheorem{thm}{Theorem}[section]
\newtheorem{cor}[thm]{Corollary}
\newtheorem{prop}[thm]{Proposition}
\newtheorem{lem}[thm]{Lemma}

\theoremstyle{definition}
\newtheorem{defn}[thm]{Definition}

\newtheorem{example}[thm]{Example}

\newtheorem{rmk}[thm]{Remark}

\newtheorem{wfl}[thm]{Workflow}

\def\Z{\mathbb{Z}}
\def\Q{\mathbb{Q}}
\def\R{\mathbb{R}}
\def\C{\mathbb{C}}

\def\PP{\mathbb{P}}

\title{\bf Positive Geometries from Cubic Surfaces}
\date{}

\author{Bernd Sturmfels and Simon Telen}
\begin{document}
\maketitle
	
\begin{abstract} \noindent
We present a study of cubic surfaces from the novel perspective
of positive geometry. Our positive geometries have
dimension two (the surface minus its 27 lines),  dimension three
(its complement in 3-space), and dimension four (the moduli space).
In each case we explore the positive arrangement, its combinatorial rank, and the
canonical~forms.
\end{abstract}
	
\section{Introduction} \label{sec1}

The cubic surface with its $27$ lines is one of the most beautiful vistas in $19$th century mathematics. The subject has inspired many generations of algebraic geometers.
In this article, we show how the rich classical theory
 of cubic surfaces lends itself naturally to the current
perspective
of positive geometry \cite{Lam, Lam_moduli, RST, Harvard}, a subject at the interface of mathematics and theoretical physics \cite{ABHY, ABL}. The paper is partly expository, building on earlier work on del Pezzo surfaces \cite{EGPSY, HKT, RSS2, SY} while incorporating a range of recent developments in positive geometry \cite{BD, CT, HP, KPS}. Along the way, we establish several new definitions and new results.

Our exposition will be guided by the following result.
We shall derive a  detailed proof.

\begin{thm} \label{thm:130}
Let $X$ be a general real cubic surface in $\PP^3$
whose $27$ lines are real, and let $Y$ be the union of these lines.
The complement $(X \backslash Y)_\R$ consists of $130$ curvy polygons,
namely $10$ triangles, $90$ quadrilaterals and $30$ pentagons.
Each of these is a positive geometry for $(X,Y)$. Their $130$ canonical
differential forms span the space $\Omega^2_{\rm log}(X\backslash Y)$
which has dimension $109$.
\end{thm}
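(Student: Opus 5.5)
We realize $X$ as the blow-up of $\PP^2$ at six general real points $p_1,\dots,p_6$. The $27$ lines are then the images of three families: the six exceptional curves $E_i$, the fifteen lines $L_{ij}$ through $p_i,p_j$, and the six conics $C_j$ through the five points other than $p_j$. We will use this model for all three assertions: the face count, positivity, and the dimension count.

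\emph{Counting the regions.} First we compute Euler characteristics. $X_\R$ is $\#7\,\R\PP^2$, with $\chi=1-6=-5$. The real curve $Y_\R$ is a graph:
\begin{itemize}
\item its vertices are the $135$ intersection points of pairs of lines, all distinct for general $X$ (no Eckardt points);
\item each line meets $10$ others, so it is cut into $10$ arcs, giving $270$ edges.
\end{itemize}
If every region is a disc, then $-5 = 135-270+F$, so $F=130$.

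To check that regions are discs and to sort them by number of sides, we pass to $\PP^2_\R$. There the arrangement consists of the $15$ lines $L_{ij}$ and $6$ conics $C_j$, and the blow-up opens each point $p_i$ into the exceptional edge $E_i$. A face with $k$ sides satisfies $\sum_k k f_k = 2\cdot 270 = 540$. Together with $\sum_k f_k = 130$, this is consistent with $(f_3,f_4,f_5)=(10,90,30)$, since $30+360+150=540$. To pin down the numbers we use the $W(E_6)$-symmetry on the labels. Combinatorially, the regions of a real configuration with all lines real form a single orbit-type structure, so we can count directly in one explicit configuration (for instance points on a conic perturbed, or the Clebsch surface) and invoke genericity: the combinatorial type is constant on the connected component of the configuration space. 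Here I expect the main obstacle. The space of real points with all $27$ lines real may have several chambers, so we must argue that the face vector is the same in each. I would first try a local argument: crossing a wall creates a triple point, which is an Eckardt point, and the three triangles and three other faces involved trade sides in a way that preserves $(f_3,f_4,f_5)$. Failing that, we enumerate the chambers up to $W(E_6)$ and verify each one computationally.

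\emph{Positivity.} Each region $P$ is a curvy polygon bounded by arcs of lines. Every boundary line $\ell\cong\PP^1$ meets the adjacent sides transversally at the vertices of $P$, so the boundary component is a real interval in $\PP^1$, which is a positive geometry with residue form $dt/t-dt/(t-1)$. By the recursive definition, it suffices to produce a $2$-form $\Omega_P$ with the following properties:
\begin{itemize}
\item it has simple poles only along the sides of $P$;
\item its residues along the sides are the canonical forms of those edges;
\item it is holomorphic elsewhere.
\end{itemize}
Since $K_X=-H$, such a form is a section of $K_X+\sum_{\text{sides}}\ell = \mathcal{O}(-H+\sum\ell)$. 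For a triangle $\sum\ell\sim H$, so this bundle is trivial and $\Omega_P$ is unique up to scale. For quadrilaterals and pentagons we compute $h^0$ and impose vanishing of residues at the non-boundary intersections, using adjunction curves in $\PP^2$ coordinates. Existence then follows from a residue exact sequence and vanishing of $H^1(X,\mathcal{O})$. Uniqueness follows from $H^0(X,K_X)=0$.

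\emph{Dimension 109.} Let $U=X\setminus Y$. Then $\Omega^2_{\log}(U)\cong H^2(U,\C)$ is pure of type $(2,2)$. The Gysin/residue sequence gives
$$\dim H^2(U)=\dim H^4 \text{ terms}\ldots$$
More precisely, we compute $b_2(U)$ from $\chi(U)=\chi(X)-\chi(Y)$. We have $\chi(X)=9$ and $\chi(Y)=27\cdot 2-135=-81$, so $\chi(U)=90$. Also $H^1(U)$ is spanned by the relations among the $27$ line classes in $\mathrm{Pic}(X)\cong\Z^7$, which gives $b_1=20$. The hyperplane class kills $H^4$ and $H^3$ is computed dually, so $b_2=\chi-1+b_1-b_3+\ldots$. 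We would fix this by the Orlik–Solomon-type presentation and expect $109$.

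Finally, the spanning claim follows from a rank computation of the $130\times 109$ residue matrix, organized by $W(E_6)$-orbits.
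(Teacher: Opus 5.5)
You have the right ingredients, but the face count and the dimension count are both left unfinished, and the count is the central claim.

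\textbf{The face count.} Two problems with the counting argument:
\begin{itemize}
\item The identity $135-270+F=\chi(X_\R)=-5$ gives $F=130$ only if every face is an open disc, and you never show this.
\item The equations $\sum_k f_k=130$ and $\sum_k kf_k=540$ do not determine the face vector; for example $(f_3,f_4,f_5)=(20,70,40)$ satisfies both.
\end{itemize}
So everything rests on the chamber-invariance step, which you flag as the main obstacle and leave open. The Clebsch surface also cannot serve as a base point: its Eckardt points give $120$ quadrilaterals.

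The missing idea is the reduction in the paper's proof. A real cubic with $27$ real lines is $\PP^2_\R$ blown up at six real points, so it is a point of $\mathcal{Y}(3,6)_\R$. Relabeling by $W({\rm E}_6)$ does not change the surface, and $W({\rm E}_6)$ acts transitively on the $432$ components (Lemma~\ref{lem:Weylaction}). Hence you may assume $(a,b,c,d)\in\R^4_{>0}$. The cyclic orders in Table~\ref{tab:circular} change only when two intersection points on a line collide, which is an Eckardt point. This pezzotope meets only the ten Eckardt hypersurfaces \eqref{eq:10eckardt}, one for each triangle in \eqref{eq:10triangles}.

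Your flip must then be described correctly. There is one triangle, not three. It lies in one of the ten hexagons of the double-six picture (Figures~\ref{fig:endler1} and~\ref{fig:endler2}) and cuts it into the triangle, three pentagons along its edges, and three quadrilaterals at its vertices. Crossing the wall exchanges those pentagons and quadrilaterals. In general a flip makes three neighbouring faces lose a side and three gain one. So the face vector $(10,90,30)$ is preserved only because of this specific configuration, and that has to be verified for the arrangement in Figure~\ref{fig:endler2}.

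\textbf{The dimension count and spanning.} The dimension computation trails off, and saying that $H^3$ is computed dually is not an argument. You can finish it with your own numbers:
\begin{itemize}
\item $Y\sim -9K_X$ is ample, so $U$ is affine and $b_3(U)=b_4(U)=0$ by Andreotti--Frankel. Hence $b_2(U)=\chi(U)-1+b_1(U)=90-1+20=109$.
\item Purity still needs justification. The line classes span $H^2(X)$, so there is no weight $2$; the lines are rational, so there is no weight $3$. Thus $H^2(U)$ has type $(2,2)$ and $\Omega^2_{\rm log}(U)=F^2H^2(U)=H^2(U)$.
\end{itemize}
The paper's route (Lemma~\ref{lem:109}) is shorter: $109$ is the genus $135-27+1$ of the Schl\"afli graph, or equivalently $h^0(\mathcal{O}_X(K_X+Y))=h^0(\mathcal{O}_X(8))=\binom{11}{3}-\binom{8}{3}$.

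The spanning claim is only announced as a rank computation. The paper makes it concrete in Remark~\ref{rmk:realrank}. Each line, viewed as a conic $G_i$ in a plane model, gives the relation that the forms of the regions inside it sum to zero. These $27$ relations have $6$ dependencies, so the rank is $130-27+6=109$.

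\textbf{What is sound.} Your positivity step is correct. Using $H^0(X,K_X)=0$ and $H^1(X,K_X)\cong H^1(X,\mathcal{O}_X)^\vee=0$ in the residue sequence gives a uniform alternative to checking the paper's explicit forms against Brown--Dupont's residue criterion. In the paper's framework you would also need that $(X,Y)$ has genus zero.
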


Here $\PP^3$ denotes the projective space of dimension $3$, with
homogeneous coordinates $y = (y_0,y_1,y_2,y_3)$.
A cubic surface $X$ is the solution set in $\PP^3$ of a homogeneous polynomial equation
of degree three. The following cubic equation in $y$ will serve as our running example:
\begin{equation}
\label{eq:cubicsurface}
31 y_0 y_1 y_2-23 y_0 y_1 y_3+19 y_0 y_2 y_3-23 y_1 y_2 y_3
-32 y_0^2 y_1+45 y_0 y_1^2
-16 y_2^2 y_3+13 y_2 y_3^2
  \,\,= \,\, 0. \quad
\end{equation}
This surface contains $27$ straight lines $\PP^1$. 
The third column of Table \ref{tab:27} exhibits $27$
pairs of linear equations. These equations define
the arrangement $Y$ of all $27$ lines that lie on $X$.

The varieties in this paper have their points over the 
algebraically closed field of complex
numbers $\C$. Their defining polynomials
have coefficients in the real numbers $\R$.
Using the letter $\R$ as a subscript means that
we restrict a variety to the subset of
points with real coordinates.
For instance, the complex line $\PP^1$ is the Riemann sphere,
whereas $\PP^1_\R$ is a circle.

Every line in $\PP^3_\R$ is a circle.
Each of the $27$ circles on the cubic surface $X_\R$  intersects $10$ other circles on $X_\R$.
A 3D printed model  is shown on the left in
Figure \ref{fig:27circles}. It shows a {\em Clebsch cubic}, which has
$10$ \emph{Eckardt points}. These are points where
three circles intersect. 
The $27$ lines divide the cubic into $120$ curvy quadrilaterals,
seen on the right in Figure \ref{fig:27circles}.
The Clebsch cubic is not general because of its Eckardt points.
Theorem \ref{thm:130} does not apply.

The $10$ Eckardt points seen in Figure \ref{fig:27circles}
correspond to the question marks in Figure \ref{fig:endler2}.
On a general cubic surface $X$, like \eqref{eq:cubicsurface},
no three lines intersect. Here, each of the $10$ question marks
is replaced  by an up-triangle or by a down-triangle. This replaces 
three of the six quadrilaterals around each question mark
by a pentagon. This gives the number $130$ in  Theorem~\ref{thm:130}.
It explains the $10$ triangles, $30$ pentagons and $90$ quadrilaterals in
$(X\backslash Y)_\R$.

We point out that the $130$ polygons in Theorem \ref{thm:130} were already known to Segre in 1942 \cite[\S 65]{Segre}.  We present a modern proof in Section \ref{sec9}, partly based on symbolic~computations.

\begin{figure}
\centering
 \includegraphics[width = 1.00\linewidth]{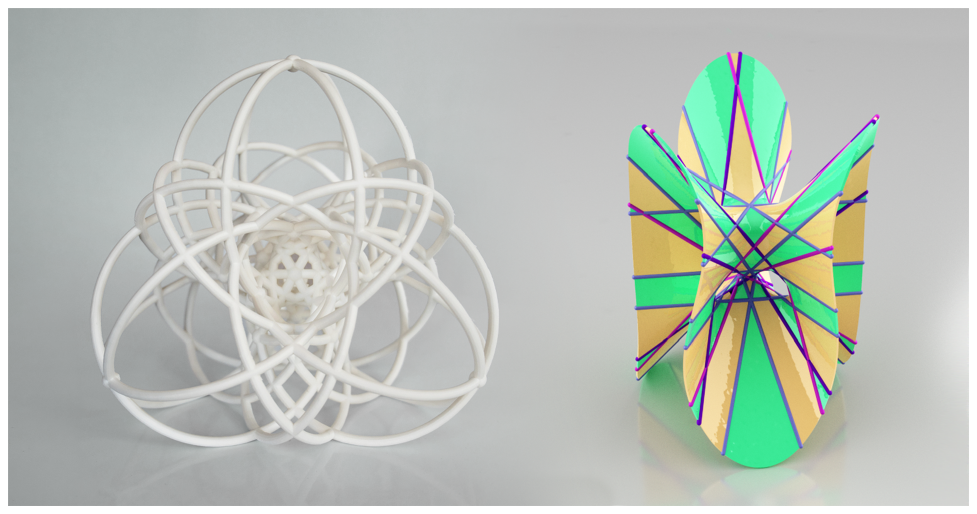} 
\caption{The $27$ lines on a cubic surface in real projective $3$-space.}
\label{fig:27circles}
\end{figure}

\smallskip

The totality of all smooth cubic surfaces is parametrized by
the moduli space $\mathcal{Y}(3,6)$.
This is a non-compact complex
variety of dimension $4$. Its real locus $\mathcal{Y}(3,6)_\R$
is a disconnected real manifold of dimension $4$. We shall prove the following
theorem about the moduli space.

\begin{thm} \label{thm:432}
The moduli space $\mathcal{Y}(3,6)_\R$ has $432$ connected components,
each of which is a curvy version of the simple $4$-polytope in Figure \ref{fig:pezzotope}.
Each of these $432$ pezzotopes is a positive geometry for the  Yoshida pair $(X,Y)$, which
is a $W(E_6)$-invariant compactification of $\mathcal{Y}(3,6)$.
Their $432$ canonical
differential forms span the space $\Omega^4_{\rm log}(X\backslash Y)$
which has dimension $150$.
\end{thm}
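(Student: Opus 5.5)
The plan is to work throughout with the model $\mathcal{Y}(3,6) \cong$ configurations of six points in $\PP^2$ in general position (no three collinear, not all six on a conic), modulo $PGL(3)$, together with its $W(E_6)$-action. For the Yoshida compactification I will take the variety $X$ cut out by Yoshida's $40$ (or, after dehomogenising, suitable) $W(E_6)$-invariant cross-ratio coordinates, i.e.\ the closure of the image of $\mathcal{Y}(3,6)$ under the map given by the $E_6$ root-type products of $3\times 3$ minors and conic conditions. $Y$ is then the boundary divisor $X\setminus \mathcal{Y}(3,6)$. I will treat the existence and smoothness/normal-crossing properties of this compactification, and the identification of its boundary strata with $W(E_6)$-orbits of root subsystems, as known from the work of Hacking--Keel--Tevelev and Yoshida cited in the introduction.

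\textbf{Step 1 (counting components).} A real point of $\mathcal{Y}(3,6)$ with all $27$ lines real corresponds to six real points in $\PP^2_\R$. For such a configuration, the signs of the $20$ minors $[ijk]$ and of the conic condition determine a chamber. The components of $\mathcal{Y}(3,6)_\R$ are precisely the chambers of the complement of these $21$ hypersurfaces in the real configuration space modulo $PGL(3,\R)$. I will count these chambers by $W(E_6)$-symmetry. $W(E_6)$ acts transitively on them, so the count equals $|W(E_6)|/|\mathrm{Stab}| = 51840/120$, where the stabiliser of one chamber is $S_5$ (or a group of order $120$). This gives $432$. To justify transitivity and identify the stabiliser, I would exhibit one explicit chamber, say six points on a convex hexagon-type configuration lying outside a conic. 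I would then check that the Coxeter/Cremona generators permute the chambers, which reduces to a finite sign computation.

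\textbf{Step 2 (face lattice).} For one chamber, I determine which boundary divisors of $Y$ meet its closure and how they intersect. Each facet should be a real component of a boundary divisor, which is either a product $\overline{M}_{0,5}$-type or a $\mathcal{Y}(3,5)$-type stratum. I would list the facets by testing which of the $36$ root-type and $40$ $A_2^3$-type divisors bound the chamber. I would then verify that the resulting poset is that of the simple $4$-polytope of Figure~\ref{fig:pezzotope}, with faces recursively pentagons/associahedra and lower pezzotope-type pieces. Positivity then follows by induction on dimension: the facets are known positive geometries, namely $\overline{M}_{0,n}$ positive parts (associahedra) and lower del Pezzo moduli. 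The canonical form is built so that its residues along the facets are the canonical forms of the facets, using that $(X,Y)$ is normal crossing along the closure.

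\textbf{Step 3 (dimension $150$).} $\Omega^4_{\rm log}(X\setminus Y)$ is identified with $H^4(\mathcal{Y}(3,6),\C)$ via Deligne, since the complement is a hyperplane-like arrangement complement with pure cohomology. Its dimension can be read off from the Poincaré polynomial of $\mathcal{Y}(3,6)$, obtained e.g.\ by point counting over $\mathbb{F}_q$: $\#\mathcal{Y}(3,6)(\mathbb{F}_q)$ is a polynomial whose top coefficient, in the reversed sense, gives $b_4=150$. The remaining task is to show that the $432$ canonical forms span this space. I would write each form explicitly in the coordinates of Step 1 as a rational function over products of minors. Their rank is then computed symbolically, by evaluating them at random points, giving a $432\times N$ numeric matrix of rank $150$.

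\textbf{Main obstacle.} I expect the hardest step to be Step 2: proving rigorously that each chamber closure is a curvy polytope with the prescribed face structure, and that the boundary is normal crossing there, so that the canonical form exists and is unique. I would first attempt this by direct computation on the single representative chamber, and then transport the result to all $432$ chambers by $W(E_6)$-equivariance.
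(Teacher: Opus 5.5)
The skeleton of your proposal matches the paper: orbit--stabilizer for $432$, a residue recursion for one pezzotope, transport by $W({\rm E}_6)$, and a numerical rank for spanning. However, Step 2 has a genuine gap. It is set up on the wrong model. The Yoshida variety $X\subset\PP^{39}$ is \emph{not} smooth with normal crossing boundary: it has $40$ singular points, all lying in $Y$, and $Y$ has only the $36$ root components. The $40$ divisors of type ${\rm A}_2^{3}$ exist only on Naruki's blow-up $(X',Y')$, which is the smooth simple normal crossing model of Hacking--Keel--Tevelev. In $X$ the five cube facets of each pezzotope are contracted to singular points. So the closure of a region in $X$ does not have the face lattice of the pezzotope, and your recursion ``using that $(X,Y)$ is normal crossing along the closure'' is not available there. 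Also, the facets are ten associahedra (from copies of $\overline{\mathcal{M}}_{0,6}$) and five cubes (from $(\PP^1)^3$), not $\overline{\mathcal{M}}_{0,5}$- or $\mathcal{Y}(3,5)$-type strata. You must work on $(X',Y')$ or on the toric variety of the pezzotope's fan, and transfer back via $H_4(X,Y)\cong H_4^{\rm lf}(\mathcal{Y}(3,6))$, so that the canonical form is independent of the compactification.

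More seriously, positive geometries are defined in this paper only relative to a positive arrangement. So the real content of the claim is that the Yoshida pair is one: its $36$ components are real with smooth real points, ${\rm Sing}(X)\subseteq Y$, and above all $(X,Y)$ has genus zero, which the Brown--Dupont map \eqref{eq:browndupont} requires. You never address this. A residue recursion produces a candidate form but does not give genus zero. The paper proves it on the Naruki model, whose boundary components are $\overline{\mathcal{M}}_{0,6}$ and $(\PP^1)^3$, via \cite[Corollary 3.13]{BD} and invariance under modifications.

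Finally, ``direct computation on a single chamber'' misses the idea that makes Step 2 tractable. In the parametrization \eqref{eq:sixpoints7}, all minors and $-q$ are subtraction-free in $a,b,c,d$. This identifies the region with $U_{>0}$ in the toric variety of ${\rm New}(g_1\cdots g_{11})$. The moment map then gives the curvy pezzotope, and ${\rm dlog}\,a\wedge{\rm dlog}\,b\wedge{\rm dlog}\,c\wedge{\rm dlog}\,d$ is its canonical form.

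In Step 1, checking that the generators permute chambers only confirms that the action is well defined, which is automatic. It does not show there is a single orbit. The paper gets transitivity from the surjection of the complement of the ${\rm E}_6$ reflection arrangement in $\R^6$ onto $\mathcal{Y}(3,6)_\R$ via \eqref{eq:dimatrix}; $W({\rm E}_6)$ acts simply transitively on the $51840$ chambers of that arrangement. Alternatively, you could enumerate all sign regions and check that your orbit exhausts them.

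Step 3 is a genuinely different route to $150$ from the paper's, which uses the reduced Euler characteristic $1215-1620+630-76+1$ of the Naruki complex via \cite[Proposition 4.4]{BD}. As stated, though, your appeal to purity of a ``hyperplane-like arrangement complement'' is not a proof. It can be made rigorous as follows. Fix $p_1,\ldots,p_4$ as a frame; then $p_5$ must avoid $6$ lines, and $p_6$ must avoid $10$ lines and one conic, so $\#\mathcal{Y}(3,6)(\mathbb{F}_q)=(q-2)(q-3)(q-5)^2$. By Katz's theorem the E-polynomial is this polynomial evaluated at $xy$. Since $\mathcal{Y}(3,6)$ is smooth and affine of dimension $4$, Poincar\'e duality, Artin vanishing and Deligne's bounds on Hodge types show that only $H^4$ contributes to the monomials $x^0y^r$. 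Hence $h^{4,4}(H^4)=150$ and $h^{4,b}(H^4)=0$ for $b<4$. This gives $\dim\Omega^4_{\rm log}=\dim F^4H^4=150$ and, as a bonus, exactly the genus zero condition you were missing.
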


The formulations in Theorems~\ref{thm:130} and \ref{thm:432} are meant to be similar.
This commonality will guide our discussion.
The most important keyword in both theorems is 
{\em positive geometry}. We explain what this means in
Definition \ref{def:positivegeometry}. That definition was inspired by 
 Brown and Dupont  \cite{BD}, and it
refines that in \cite[Section 1]{Harvard}. The original definition
due to Arkani-Hamed, Bai and Lam \cite{ABL} rests
on a recursion, which reflects the structure of scattering amplitudes in physics.
All other objects appearing in Theorems~\ref{thm:130} and \ref{thm:432} 
will also be defined later.

\begin{figure}
\centering
\includegraphics[width = 0.7\linewidth]{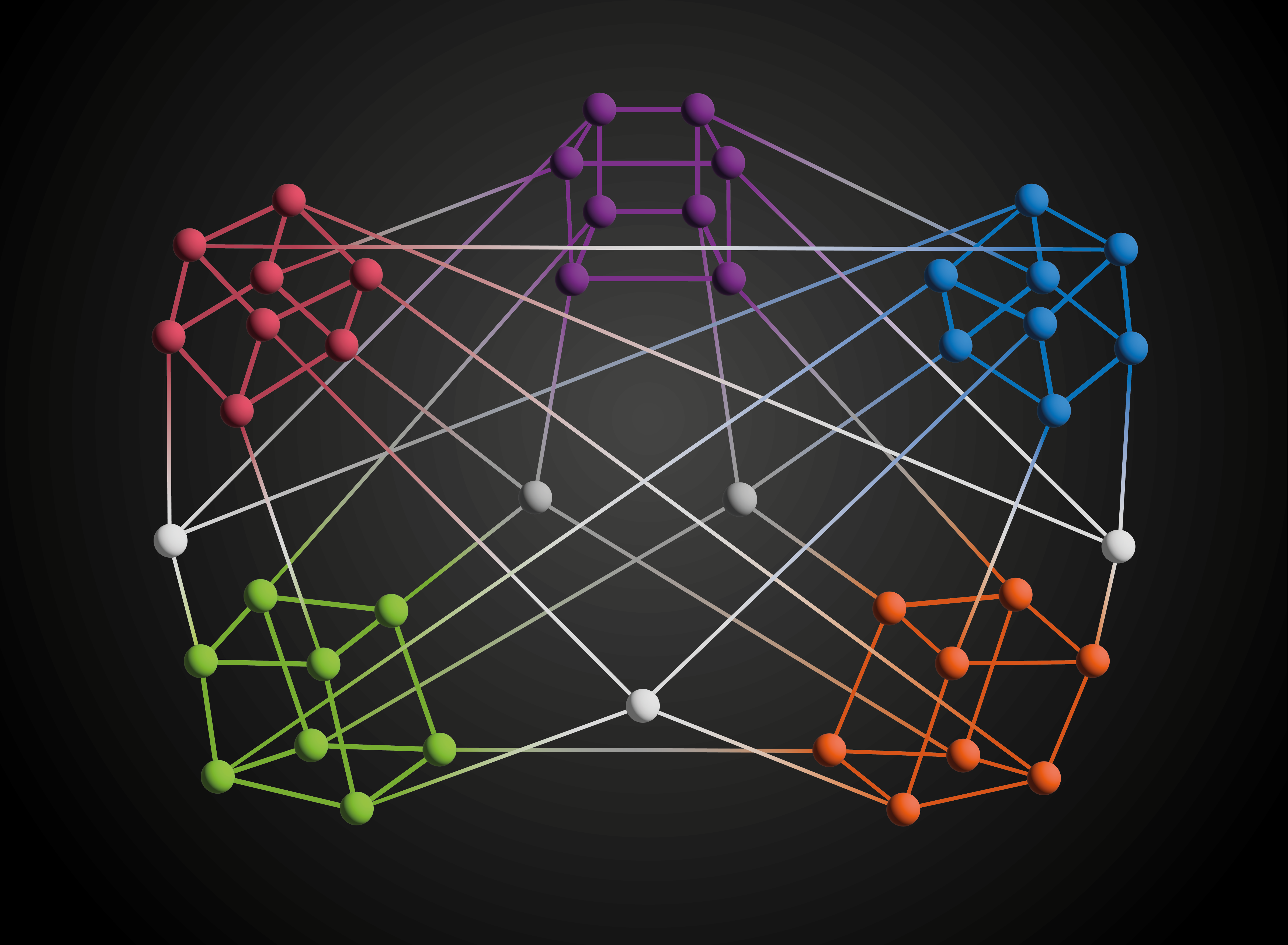}
\caption{Edge graph of the ${\rm E}_6$ pezzotope from \cite[Figure 1]{EGPSY}.
Each of its $15$ facets is a simple $3$-polytope.
Ten facets are associahedra. Five facets are cubes, here shown in color.}
\label{fig:pezzotope}
\end{figure}

The present article is organized as follows.
Section \ref{sec2} revisits the derivation
of the cubic surface by blowing up the plane at six points.
We explain this
at the undergraduate level, with pictures in Figures  \ref{fig:firstpentagon}
and \ref{fig:adjointquadrilateral}.
The cubic in (\ref{eq:cubicsurface}) is derived from the six points
in (\ref{eq:sixpoints}). 
Its $130$ regions are displayed in
 (\ref{eq:10triangles}), (\ref{eq:pentagons}), (\ref{eq:quadrilaterals}).
Section \ref{sec3} offers a conceptual explanation of these data. This rests on
the Schl\"afli graph $\mathcal{S}^{10}_{27}$
and its symmetry group,  the Weyl group $W({\rm E}_6)$.

Understanding integrals is central to physics. Integrals are pairings between
integration cycles and differential forms.
Section \ref{sec4} introduces the canonical
differential forms that are essential for
positive geometries.  For the $130$ regions on a general cubic surface,
we derive them in
(\ref{eq:triangle1}), (\ref{eq:canformquadrilateral}) and (\ref{eq:canonicalpentagon}).
Lemma \ref{lem:109} says that $109$ of the forms are linearly independent.

Section \ref{sec5} starts from relative homology of a pair $(X,Y)$,
and it introduces the key definitions for our discussion:
positive arrangement, positive geometry, and combinatorial rank.
 Workflow \ref{workflow} offers a blueprint
for future studies of positive geometries arising in practice.

In Section \ref{sec6} we study some geometries in dimension three:
an arrangement derived from Cayley's cubic surface (Proposition \ref{prop:41}),  
$\mathcal{M}_{0,6}$ via the 
Segre cubic  (Proposition \ref{prop:segrecubic}), and a recent
construction due to Koefler, Pavlov and Sinn \cite{KPS}.
We generalize this in Theorem \ref{thm:KPS}.

Section \ref{sec7} is devoted to stringy integrals and their field theory limits.
We explain these physics concepts  to a general audience of
mathematicians. This section is intended to highlight the tight connection
between positive geometries and the amplitudes they represent.

In Section \ref{sec8} we formally introduce the
${\rm E}_6$ pezzotope, which is the
$4$-dimensional polytope in
Figure \ref{fig:pezzotope}.
Theorem \ref{thm:clebsch} explains how a
subdivision of the pezzotope governs the family of
cubic surfaces depicted in Figure \ref{fig:endler2}.
We also introduce the theory of $u$-equations, as in (\ref{eq:perfectuE6}),
with emphasis on the toric geometry perspective 
developed by Calvo Cortes and Telen \cite{CT}.

Section \ref{sec9} is devoted to the 
$4$-dimensional moduli space 
$\mathcal{Y}(3,6)$. This parametrizes configurations of
 six points in general position in the plane $\PP^2$,
 and hence smooth cubic surfaces in $\PP^3$. 
 Our understanding of this space and its symmetries yields the proof of Theorem \ref{thm:130}. 

 Section \ref{sec10} introduces a positive arrangement $(X,Y)$ with
$X \backslash Y = {\cal Y}(3,6)$. Here $X$ is the Yoshida variety in $\PP^{39}$.
The tropicalization of $X$ is the normal crossing boundary  of the
Naruki arrangement $(X',Y')$.
This allows us to study $\Omega^4_{\rm log}(X \backslash Y)$ and to show
 that~the~pezzotopes  in $ {\cal Y}(3,6)$
 are positive geometries.
Theorem \ref{thm:cr150} completes the proof of Theorem~\ref{thm:432}.
  We conclude in Section \ref{sec11} with a study of the canonical forms on
${\cal Y}(3,6)$. 
They span the $150$-dimensional space $\Omega^4_{\rm log}(X \backslash Y)$.
Explicit generators are given in Corollaries \ref{cor:elfzwei} and~\ref{cor:elfdrei}.

\section{Six Points, 27 Lines, and 130 Regions}
\label{sec2}

In this section, we illustrate Theorem \ref{thm:130} in an explicit example. We build a cubic surface from a
configuration of six points in the projective plane $\mathbb{P}^2_\mathbb{R}$. Concretely, we fix the points
\begin{equation}
\label{eq:sixpoints}
\begin{matrix}
 E_1 &=& (1:0:0), & \qquad
 E_2 &=& (0:1:0), & \qquad
 E_3 &=& (0:0:1), \\
 E_4 &=& (1:-1:1),  & \qquad
 E_5 &=& (6:-4:3),  & \qquad
 E_6 &=& (24:-11:8).
 \end{matrix}
 \end{equation}
 These points are the columns of the $3 \times 6$ matrix 
 in (\ref{eq:sixpoints7})
 with $\, a\,=\,b\,=\,c\,=\,1 \,$ and $\,d=2$.

 Let $F_{ij}$ be the line spanned by $E_i$ and $E_j$,
 and let $G_k$ be the conic passing through the five points
$\{E_1,E_2,E_3,E_4,E_5,E_6\} \backslash \{E_k\}$.
In total, we have introduced $27$ geometric objects,
namely $6$ points, $15$ lines, and $6$ conics. These divide the plane $\PP^2_\R$ into $130$ regions.
Some regions have linear boundaries, so they are polygons. Figure \ref{fig:firstpentagon}
depicts such a pentagon. Many regions  have quadratic pieces
$G_i$ and special points $E_i$ in their boundaries.
Figure \ref{fig:adjointquadrilateral} shows the region 
$E_5F_{35}G_5F_{45}$. This is a curvy triangle in $\PP^2_\R$.
On the cubic surface to be created in $\PP^3_\R$, the point $E_5$ is replaced by a line segment,
so we get a curvy quadrilateral.

\begin{figure}[h]
\centering
\includegraphics[width = 0.72\linewidth]{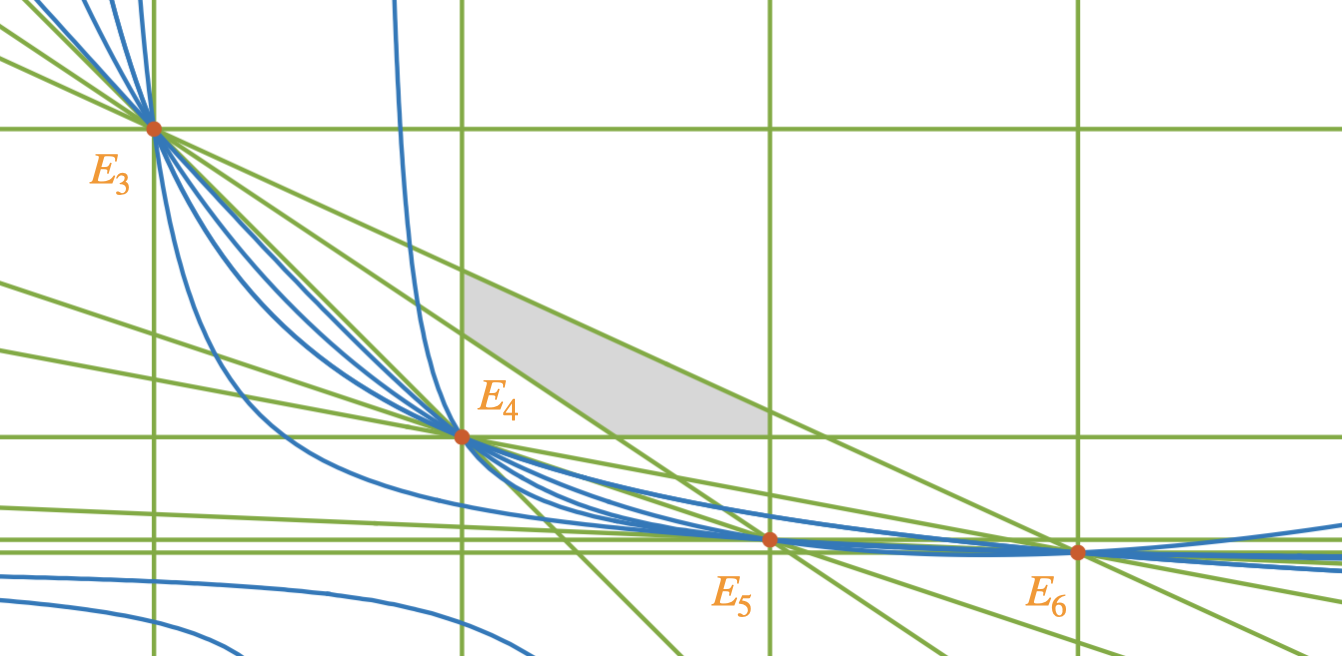}
\caption{The pentagon $F_{14}F_{35}F_{24}F_{36}F_{25}$ in the
real projective plane $\PP_\R^2$.}
\label{fig:firstpentagon}
\end{figure}

\begin{figure}
\centering
\includegraphics[width =0.55\linewidth]{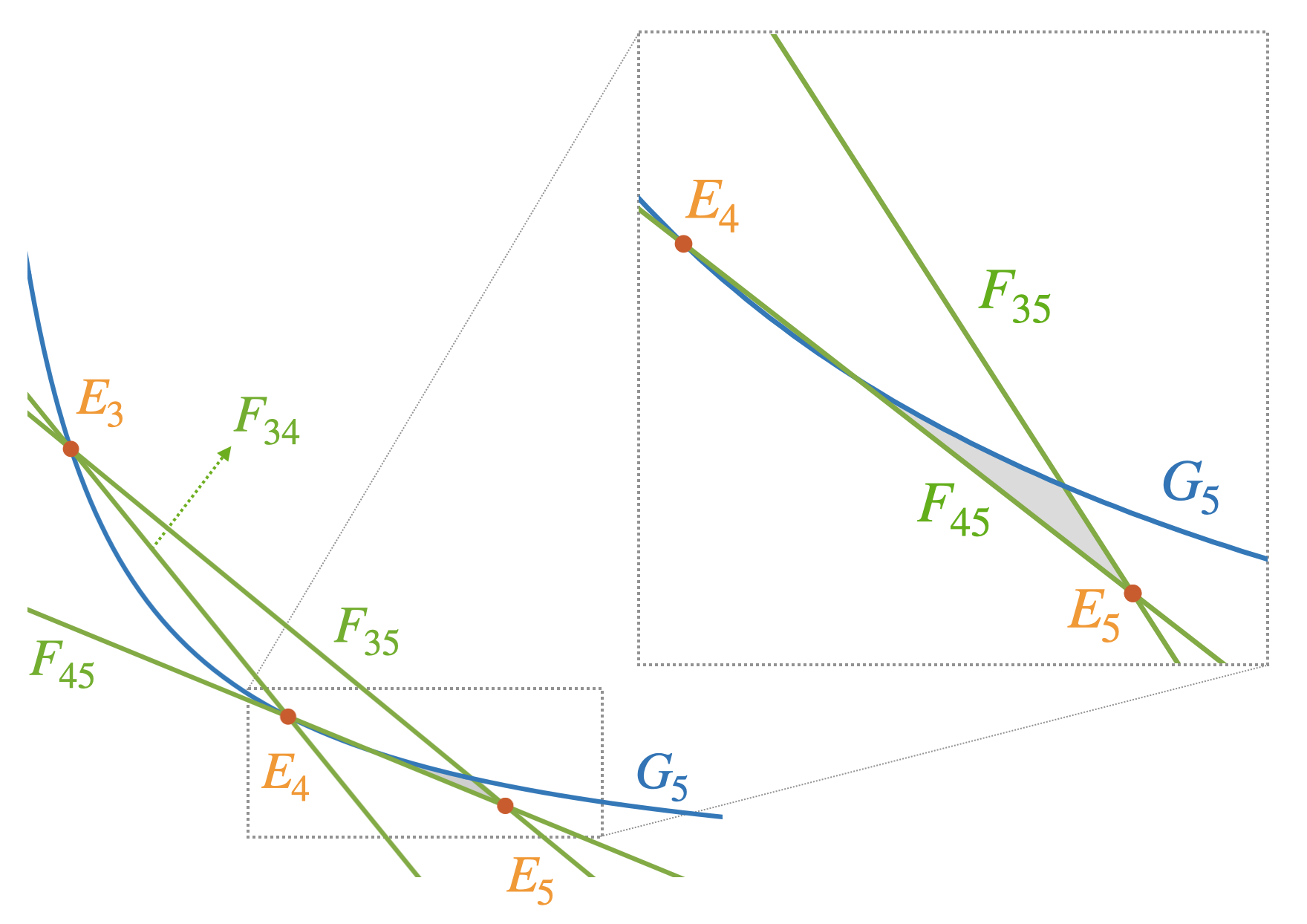} 
\includegraphics[width = 0.44\linewidth]{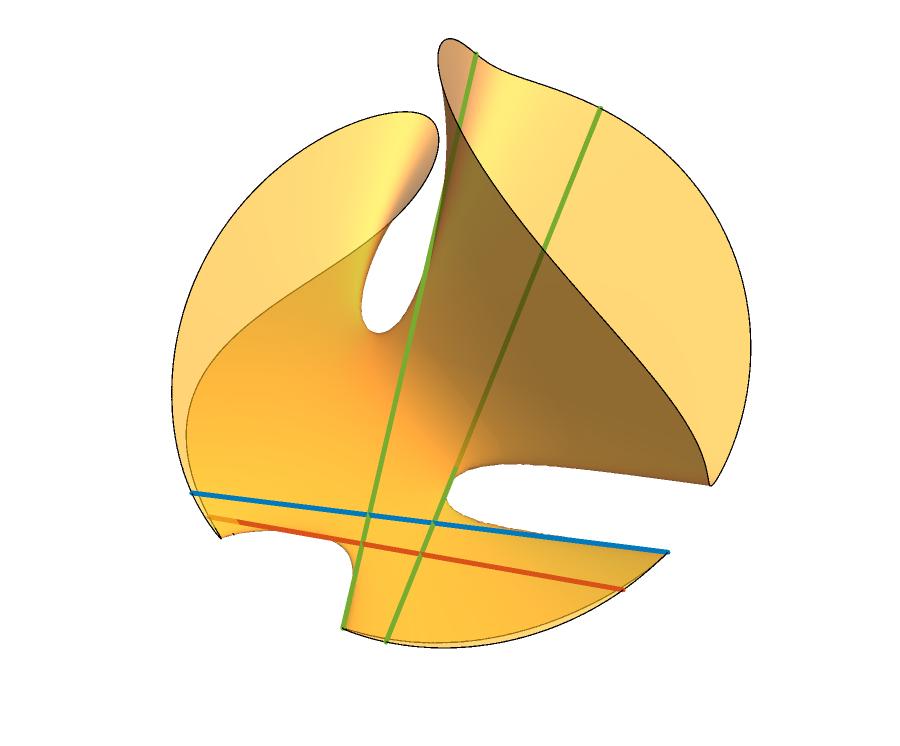}
\caption{The quadrilateral $E_5F_{35}G_5F_{45}$ on the cubic surface
comes from a triangle in $\PP^2_\R$.
 }
\label{fig:adjointquadrilateral}
\end{figure}

\begin{table}
$$ \begin{footnotesize}
\begin{matrix}
\hbox{Label} & & \hbox{Point, line or conic in $\PP^2$} & \hbox{Line in $\PP^3$} \smallskip \\
E_1 &  & \langle x_1,x_2 \rangle &  \langle y_1 - 3 y_3,6 y_0 - y_2 \rangle \\
E_2 &  & \langle x_0,x_2 \rangle &  \langle 3 y_1 + y_2 - 2 y_3,2 y_0 - y_2 \rangle \\
E_3 &  & \langle x_0,x_1 \rangle &  \langle 15 y_1 - 13 y_3,26 y_0 - 30 y_2 - 13 y_3 \rangle \\
E_4  &  & \langle x_0 + x_1,x_1 + x_2 \rangle &  \langle y_1 + y_2,16 y_0 + 7 y_2 + 13 y_3 \rangle \\
E_5 &  & \langle 2 x_0 + 3 x_1,3 x_1 + 4 x_2 \rangle &  \langle 45 y_1 + 32 y_2 - 7 y_3,2 y_0 + y_3 \rangle \\
E_6 &  & \langle 11 x_0 + 24 x_1,8 x_1 + 11 x_2 \rangle &  \langle 15 y_1 + 11 y_2,22 y_0 + 13 y_3 \rangle \smallskip \\
F_{12}  & &  \langle x_2 \rangle &  \langle y_2,y_0 \rangle \\
F_{13} &  & \langle x_1 \rangle &  \langle y_3,y_1 \rangle \\
F_{14}  &  & \langle x_1 + x_2 \rangle &  \langle 15 y_1 + 8 y_2 + 3 y_3,y_0 + y_3 \rangle \\
F_{15}  &  & \langle 3 x_1 + 4 x_2 \rangle &  \langle 3 y_1 + 2 y_2,64 y_0 - 2 y_2 + 39 y_3 \rangle \\
F_{16}  &  & \langle 8 x_1 + 11 x_2 \rangle &  \langle 17 y_1 + 11 y_2 - 3 y_3,17 y_0 - y_2 + 8 y_3 \rangle \\
F_{23}  &  & \langle x_0 \rangle & \langle 3 y_1 + 2 y_2,y_0 + y_3 \rangle \\
F_{24}  &  & \langle x_0 - x_2 \rangle &  \langle y_3,32 y_0 - 45 y_1 - 31 y_2 \rangle \\
F_{25}  &  & \langle x_0 - 2 x_2 \rangle &  \langle y_1,19 y_0 - 16 y_2 + 13 y_3 \rangle \\
F_{26}  &  & \langle x_0 - 3 x_2 \rangle &  \langle 3 y_1 - y_3,4 y_0 - 3 y_2 + y_3 \rangle \\
F_{34}  &  & \langle x_0 + x_1 \rangle &  \langle 23 y_1 + 16 y_2 - 13 y_3,y_0 \rangle \\
F_{35}  &  & \langle 2 x_0 + 3 x_1 \rangle &  \langle y_2,32 y_0 - 45 y_1 + 23 y_3 \rangle \\
F_{36}  &  & \langle 11 x_0 + 24 x_1 \rangle &  \langle 30 y_1 - 22 y_2 - 13 y_3,y_0 - 2 y_2 \rangle \\
F_{45}  &  & \langle x_0 + 3 x_1 + 2 x_2 \rangle &  \langle 3 y_1 - y_3,y_0 - 2 y_2 \rangle \\
F_{46}  &  & \langle 3 x_0 + 16 x_1 + 13 x_2 \rangle &  \langle y_2,y_1 \rangle \\
F_{56}  &  & \langle x_0 + 24 x_1 + 30 x_2 \rangle &  \langle y_3,y_0 \rangle \smallskip \\
G_1 &  & \langle 4 x_0^2 - 9 x_0 x_1 - 34 x_0 x_2 - 21 x_1 x_2 \rangle &  \langle 15 y_1 - 13 y_3,2 y_0 - y_2 \rangle \\
G_2 &  & \langle 109 x_0 x_1 + 96 x_1^2 + 154 x_0 x_2 + 141 x_1 x_2 \rangle &  \langle 135 y_1 + 77 y_2 - 78 y_3,6 y_0 - y_2 \rangle \\
G_3 &  & \langle 7 x_0 x_1 + 10 x_0 x_2 - 5 x_1 x_2 - 8 x_2^2 \rangle &  \langle y_1 - 3 y_3,2 y_0 - 2 y_2 - 7 y_3 \rangle \\
G_4 & &  \langle 15 x_0 x_1 + 22 x_0 x_2 + 3 x_1 x_2 \rangle &  \langle 15 y_1 + 11 y_2,16 y_0 + y_2 + 15 y_3 \rangle \\
G_5 &  & \langle 13 x_0 x_1 + 22 x_0 x_2 + 9 x_1 x_2 \rangle &  \langle 429 y_1 + 352 y_2 - 39 y_3,22 y_0 + 13 y_3 \rangle \\
G_6 &  & \langle x_0 x_1 + 2 x_0 x_2 + x_1 x_2 \rangle &  \langle y_1 + y_2,2 y_0 + y_3 \rangle  \\
\end{matrix} \end{footnotesize}
$$
\caption{Computation of the $27$ lines on the cubic surface (\ref{eq:cubicsurface})
from the $6$ points in (\ref{eq:sixpoints}).}
\label{tab:27}
\end{table}

In computer algebra, we encode
each of our $27$ objects $E_i,F_{ij},G_j$  algebraically, namely~by its homogeneous ideal in the polynomial ring
$\Q[x_0,x_1,x_2 ]$. In that representation, $E_i$ is an ideal generated by
two linear forms, $F_{ij}$ is a principal ideal generated
by one linear form, and $G_k$ is a principal ideal generated by one quadratic form.
The $27$ ideals are listed in 
Table \ref{tab:27}. 

We next  compute the associated cubic surface in $\PP^3$.
To do this, we choose four linearly independent cubics that vanish at our six points. 
As suggested in \cite{PSS}, a convenient choice~is
\begin{equation}
\label{eq:octamodel}
     y_0 \,=\, F_{12} F_{34} F_{56} \,,\quad 
  y_1 \,=\, F_{13} F_{25} F_{46} \,,\quad y_2 \,=\, F_{12} F_{35} F_{46} \,, \quad
  y_3 \,=\, F_{13} F_{24} F_{56} . 
  \end{equation}
  Using elimination, we find that
the four ternary forms in (\ref{eq:octamodel}) satisfy the cubic equation (\ref{eq:cubicsurface}).
This octanomial \cite{PSS} defines a cubic surface in $\PP^3$. Geometrically, it is the image of the~map
\begin{equation}
\label{eq:blowup}
 \PP^2 \dashrightarrow \PP^3, \,(x_0:x_1:x_2) \mapsto (y_0:y_1:y_2:y_3).
 \end{equation}
This map is well-defined and one-to-one on
$\PP^2 \backslash \{E_1,\ldots,E_6\}$, but it blows up the six points.

The cubic surface contains $27$ lines.
Each of them is represented by its homogeneous prime
ideal, which is generated by two linear forms in the homogeneous
coordinates $(y_0:y_1:y_2:y_3)$ on $\PP^3$.
These $27$ ideals in $\Q[y_0,y_1,y_2,y_3]$ are listed in the third column of Table \ref{tab:27}.
They arise from the $27$ ideals in the second column.
Let $I$ be the bihomogeneous prime ideal in $\Q[x_0,x_1,x_2;y_0,y_1,y_2,y_3]$
whose variety in $\PP^2 \times \PP^3$ is the graph of the map~(\ref{eq:blowup}).
For each object in $\PP^2$, we add its ideal to $I$, we saturate, and then
we eliminate $x_0,x_1,x_2$ to get the ideal in the third column.
This process is reversible. For each line in $\PP^3$, we add its ideal to $I$,
we saturate, and then we eliminate
 $y_0,y_1,y_2,y_3$ to get the ideal in the second column.

The $27$ lines define an arrangement $Y$ on the cubic surface $X$.
The complement $X_\R \backslash Y_\R$ has $130$ connected 
components. These are open regions in $X_\R$, and they arise from the regions in $\PP^2_\R$.
We find $10$ triangles,
$90$ quadrilaterals and $30$ pentagons, as promised in~Theorem~\ref{thm:130}.

\begin{table}[h]
$$ \begin{footnotesize}
\!\!\!\!\! \begin{matrix}
\hbox{Line}\, \,L & & \hbox{Circular ordering of the $10$ points on $L$} 
&\phantom{d}& \hbox{Line}\, \,L & & \hbox{Circular ordering of the $10$ points on $L$} 
\smallskip 
\\

{\color{darkgray} E_2} & & {\color{darkgray} G_1}, {\color{darkgray} F_{12}}, {\color{darkgray} F_{26}}, {\color{darkgray} F_{25}}, {\color{darkgray} F_{24}}, {\color{darkgray} G_3}, {\color{darkgray} F_{23}}, {\color{mybordeaux} G_4},{\color{darkgray} G_5}, {\color{mybordeaux} G_6}  & & 
{\color{mybordeaux} E_1} & & {\color{mybordeaux} G_6}, {\color{darkgray} G_5}, {\color{mybordeaux} G_4}, {\color{darkgray} G_3}, {\color{mybordeaux} G_2}, {\color{darkgray} F_{16}}, {\color{mybordeaux} F_{15}}, {\color{darkgray} F_{14}}, {\color{mybordeaux} F_{13}}, {\color{darkgray} F_{12}} \\
{\color{darkgray} E_4} & & {\color{darkgray} F_{34}}, {\color{darkgray} G_3}, {\color{mybordeaux} F_{24}}, {\color{darkgray} F_{14}}, {\color{mybordeaux} F_{46}}, {\color{darkgray} F_{45}}, {\color{darkgray} G_5}, {\color{mybordeaux} G_6}, {\color{darkgray} G_1}, {\color{mybordeaux} G_2}  & & 
 {\color{mybordeaux} E_3} & & {\color{mybordeaux} G_4}, {\color{darkgray} G_5}, {\color{mybordeaux} G_6}, {\color{darkgray} G_1}, {\color{mybordeaux} G_2}, {\color{darkgray} F_{34}}, {\color{mybordeaux} F_{35}}, {\color{darkgray} F_{36}}, {\color{mybordeaux} F_{13}}, {\color{darkgray} F_{23}} \\
 {\color{darkgray} E_6} & & {\color{darkgray} G_5}, {\color{darkgray} F_{56}}, {\color{mybordeaux} G_4}, {\color{darkgray} G_3}, {\color{mybordeaux} G_2}, {\color{darkgray} F_{16}}, {\color{darkgray} G_1}, {\color{mybordeaux} F_{26}}, {\color{darkgray} F_{36}}, {\color{mybordeaux} F_{46}} & & 
  {\color{mybordeaux} E_5} & &  {\color{mybordeaux} G_6}, {\color{darkgray} F_{45}}, {\color{mybordeaux} F_{35}}, {\color{darkgray} F_{25}}, {\color{mybordeaux} F_{15}}, {\color{darkgray} F_{56}}, {\color{mybordeaux} G_4}, {\color{darkgray} G_3}, {\color{mybordeaux} G_2} ,{\color{darkgray} G_1}
  \vspace{0.15cm} \\ 
 {\color{darkgray} F_{12}} & & {\color{darkgray} F_{45}}, {\color{darkgray} F_{36}}, {\color{mybordeaux} F_{35}}, {\color{darkgray} F_{34}}, {\color{mybordeaux} G_2}, {\color{darkgray} E_2}, {\color{darkgray} G_1}, {\color{mybordeaux} E_1}, {\color{darkgray} F_{56}}, {\color{mybordeaux} F_{46}} & &
 {\color{mybordeaux} F_{13}} & &  {\color{mybordeaux} E_1}, {\color{darkgray} G_1}, {\color{mybordeaux} F_{26}}, {\color{darkgray} F_{25}}, {\color{mybordeaux} F_{24}}, {\color{darkgray} G_3}, {\color{mybordeaux} E_3}, {\color{darkgray} F_{45}}, {\color{mybordeaux} F_{46}} ,{\color{darkgray} F_{56}}\\
 {\color{darkgray} F_{14}} & & {\color{darkgray} F_{36}}, {\color{darkgray} F_{25}}, {\color{mybordeaux} F_{35}}, {\color{darkgray} E_4}, {\color{mybordeaux} G_4}, {\color{darkgray} F_{23}}, {\color{darkgray} F_{56}}, {\color{mybordeaux} E_1}, {\color{darkgray} G_1}, {\color{mybordeaux} F_{26}} & & 
 {\color{mybordeaux} F_{15}} & & {\color{mybordeaux} F_{26}}, {\color{darkgray} F_{36}}, {\color{mybordeaux} F_{46}}, {\color{darkgray} G_5}, {\color{mybordeaux} E_5}, {\color{darkgray} F_{34}}, {\color{mybordeaux} F_{24}}, {\color{darkgray} F_{23}}, {\color{mybordeaux} E_1}, {\color{darkgray} G_1} \\
 {\color{darkgray} F_{16}} & & {\color{darkgray} F_{25}}, {\color{darkgray} F_{34}}, {\color{mybordeaux} F_{24}}, {\color{darkgray} F_{23}}, {\color{mybordeaux} E_1}, {\color{darkgray} E_6}, {\color{darkgray} G_1}, {\color{mybordeaux} G_6}, {\color{darkgray} F_{45}}, {\color{mybordeaux} F_{35}} & &  \\
 {\color{darkgray} F_{23}} & & {\color{darkgray} F_{14}}, {\color{darkgray} F_{56}}, {\color{mybordeaux} F_{15}}, {\color{darkgray} F_{16}}, {\color{mybordeaux} G_2}, {\color{darkgray} G_3}, {\color{darkgray} E_2}, {\color{mybordeaux} E_3}, {\color{darkgray} F_{45}}, {\color{mybordeaux} F_{46}} & & 
 {\color{mybordeaux} F_{24}} & &  {\color{mybordeaux} G_4}, {\color{darkgray} F_{56}}, {\color{mybordeaux} F_{15}}, {\color{darkgray} F_{16}}, {\color{mybordeaux} G_2}, {\color{darkgray} E_2}, {\color{mybordeaux} F_{13}}, {\color{darkgray} F_{36}}, {\color{mybordeaux} F_{35}},{\color{darkgray} E_4} \\
 {\color{darkgray} F_{25}} & & {\color{darkgray} F_{36}}, {\color{darkgray} F_{14}}, {\color{mybordeaux} F_{46}}, {\color{darkgray} G_5}, {\color{mybordeaux} E_5}, {\color{darkgray} F_{16}}, {\color{darkgray} F_{34}}, {\color{mybordeaux} G_2}, {\color{darkgray} E_2}, {\color{mybordeaux} F_{13}} & & 
 {\color{mybordeaux} F_{26}} & &  {\color{mybordeaux} F_{15}}, {\color{darkgray} E_6}, {\color{mybordeaux} G_6}, {\color{darkgray} F_{45}}, {\color{mybordeaux} F_{35}}, {\color{darkgray} F_{34}}, {\color{mybordeaux} G_2}, {\color{darkgray} E_2}, {\color{mybordeaux} F_{13}},{\color{darkgray} F_{14}} \\
   {\color{darkgray} F_{34}} & & {\color{darkgray} E_4}, {\color{darkgray} G_3}, {\color{mybordeaux} G_4}, {\color{darkgray} F_{56}}, {\color{mybordeaux} F_{15}}, {\color{darkgray} F_{16}}, {\color{darkgray} F_{25}}, {\color{mybordeaux} F_{26}}, {\color{darkgray} F_{12}}, {\color{mybordeaux} E_3} & & 
 {\color{mybordeaux} F_{35}} & &  {\color{mybordeaux} F_{46}}, {\color{darkgray} G_5}, {\color{mybordeaux} E_5}, {\color{darkgray} F_{16}}, {\color{mybordeaux} F_{26}}, {\color{darkgray} F_{12}}, {\color{mybordeaux} E_3}, {\color{darkgray} G_3}, {\color{mybordeaux} F_{24}},{\color{darkgray} F_{14}} \\
  {\color{darkgray} F_{36}} & & {\color{darkgray} F_{25}}, {\color{darkgray} F_{14}}, {\color{mybordeaux} F_{15}}, {\color{darkgray} E_6}, {\color{mybordeaux} G_6}, {\color{darkgray} F_{45}}, {\color{darkgray} F_{12}}, {\color{mybordeaux} E_3}, {\color{darkgray} G_3}, {\color{mybordeaux} F_{24}} & &  \\
 {\color{darkgray} F_{45}} & & {\color{darkgray} E_4}, {\color{darkgray} G_5}, {\color{mybordeaux} E_5}, {\color{darkgray} F_{16}}, {\color{mybordeaux} F_{26}}, {\color{darkgray} F_{36}}, {\color{darkgray} F_{12}}, {\color{mybordeaux} F_{13}}, {\color{darkgray} F_{23}}, {\color{mybordeaux} G_4} & & 
 {\color{mybordeaux} F_{46}} & & {\color{mybordeaux} G_4}, {\color{darkgray} E_4}, {\color{mybordeaux} F_{35}}, {\color{darkgray} F_{25}}, {\color{mybordeaux} F_{15}}, {\color{darkgray} E_6}, {\color{mybordeaux} G_6}, {\color{darkgray} F_{12}}, {\color{mybordeaux} F_{13}},{\color{darkgray} F_{23}} \\
 {\color{darkgray} F_{56}} & & {\color{darkgray} G_5}, {\color{darkgray} E_6}, {\color{mybordeaux} E_5}, {\color{darkgray} F_{34}}, {\color{mybordeaux} F_{24}}, {\color{darkgray} F_{23}}, {\color{darkgray} F_{14}}, {\color{mybordeaux} F_{13}}, {\color{darkgray} F_{12}}, {\color{mybordeaux} G_6} \vspace{0.15cm} \\
   {\color{darkgray} G_1} & & {\color{darkgray} E_6}, {\color{darkgray} F_{16}}, {\color{mybordeaux} E_5}, {\color{darkgray} E_4}, {\color{mybordeaux} E_3}, {\color{darkgray} E_2}, {\color{darkgray} F_{12}}, {\color{mybordeaux} F_{13}}, {\color{darkgray} F_{14}}, {\color{mybordeaux} F_{15}} & & 
 {\color{mybordeaux} G_2} & & {\color{mybordeaux} E_5}, {\color{darkgray} E_6}, {\color{mybordeaux} E_1}, {\color{darkgray} F_{23}}, {\color{mybordeaux} F_{24}}, {\color{darkgray} F_{25}}, {\color{mybordeaux} F_{26}}, {\color{darkgray} F_{12}}, {\color{mybordeaux} E_3}, {\color{darkgray} E_4} \\
   {\color{darkgray} G_3} & & {\color{darkgray} E_4}, {\color{darkgray} F_{34}}, {\color{mybordeaux} E_5}, {\color{darkgray} E_6}, {\color{mybordeaux} E_1}, {\color{darkgray} F_{23}}, {\color{darkgray} E_2}, {\color{mybordeaux} F_{13}}, {\color{darkgray} F_{36}}, {\color{mybordeaux} F_{35}} & &  
 {\color{mybordeaux} G_4} & & {\color{mybordeaux} E_3}, {\color{darkgray} F_{45}}, {\color{mybordeaux} F_{46}}, {\color{darkgray} F_{14}}, {\color{mybordeaux} F_{24}}, {\color{darkgray} F_{34}}, {\color{mybordeaux} E_5}, {\color{darkgray} E_6}, {\color{mybordeaux} E_1}, {\color{darkgray} E_2} \\
{\color{darkgray} G_5} & & {\color{darkgray} E_4}, {\color{darkgray} F_{45}}, {\color{mybordeaux} F_{35}}, {\color{darkgray} F_{25}}, {\color{mybordeaux} F_{15}}, {\color{darkgray} E_6}, {\color{darkgray} F_{56}}, {\color{mybordeaux} E_1}, {\color{darkgray} E_2}, {\color{mybordeaux} E_3} & &
 {\color{mybordeaux} G_6} & & {\color{mybordeaux} E_3}, {\color{darkgray} E_4}, {\color{mybordeaux} E_5}, {\color{darkgray} F_{16}}, {\color{mybordeaux} F_{26}}, {\color{darkgray} F_{36}}, {\color{mybordeaux} F_{46}}, {\color{darkgray} F_{56}}, {\color{mybordeaux} E_1},{\color{darkgray} E_2} \\
 \end{matrix} \end{footnotesize}
\vspace{-0.4cm}
$$
\caption{The $10$ lines that meet any given line, listed in the circular order of the intersections.}
\label{tab:circular}
\end{table}

We now determine all $130$ curvy polygons explicitly.
Our point of departure is the fact that each of the $27$
lines $L$ on $X$  intersects precisely $10$ of the other $26$ lines.
We mark these $10$ intersection points on $L$,
and we label each point with the name of the other line.  Now,
topologically, the real line $L$ is actually a circle in $\PP_\R^3$,
so we obtain a unique circular ordering of the other $10$ lines that meet $L$.
We list the resulting $27$ circular orderings in Table \ref{tab:circular}.

Table \ref{tab:circular} describes a graph ${\cal G}(X)$ with $135$ vertices and $270$ edges.
Thus the subdivision of the real surface has f-vector $(135,270,130)$.
The vertices of ${\cal G}(X)$ are the $135$ unordered pairs of lines that meet, like
$\{{\color{mybordeaux} E_1},{\color{darkgray} G_6}\}$, $\{{\color{mybordeaux} E_1},{\color{darkgray} G_5}\}$, $\ldots\,\,$, $\{{\color{mybordeaux} G_6},{\color{mybordeaux} F_{46}}\}$. The edges
are the $270$ pairs of pairs that have an element in common \emph{and} are 
adjacent in the circular order from Table~\ref{tab:circular}. 

For instance,
$\{{\color{mybordeaux} G_6},{\color{darkgray} F_{36}}\}$
and $\{{\color{darkgray} F_{45}},{\color{darkgray} F_{36}}\}$ 
form an edge because they 
appear next to each other in row ${\color{darkgray} F_{36}}$. The graph ${\cal G}(X)$ has a natural embedding in $X_\R$, by sending its vertices to the intersection points of the lines. The chordless $k$-cycles of ${\cal G}(X)$ are referred to as the $k$-gons of ${\cal G}(X)$, or of $X$. After filling in the question marks, ${\cal G}(X)$ is the graph in Figure \ref{fig:endler2}. 
Theorem \ref{thm:130} claims that ${\cal G}(X)$ contains $10$ triangles, $90$ quadrilaterals and $30$ pentagons.

\smallskip

Our graph indeed contains $10$ triangles.
They involve the $15$ lines we colored in {\color{darkgray} gray}:
\begin{equation}
\label{eq:10triangles}
\begin{small}
 \begin{matrix}
 & {\color{darkgray} F_{12}} {\color{darkgray} F_{36}} {\color{darkgray} F_{45}} & {\color{darkgray} F_{14}} {\color{darkgray} F_{23}} {\color{darkgray} F_{56}} & {\color{darkgray} F_{14}} {\color{darkgray} F_{25}} {\color{darkgray} F_{36}} &  {\color{darkgray} F_{16}} {\color{darkgray} F_{25}} {\color{darkgray} F_{34}} & \\
 {\color{darkgray} E_2} {\color{darkgray} F_{12}} {\color{darkgray} G_1} & {\color{darkgray} E_2} {\color{darkgray} F_{23}} {\color{darkgray} G_3} &
  {\color{darkgray} E_4} {\color{darkgray} F_{34}} {\color{darkgray} G_3} & {\color{darkgray} E_4} {\color{darkgray} F_{45}} {\color{darkgray} G_5}  & {\color{darkgray} E_6} {\color{darkgray} F_{56}} {\color{darkgray} G_5}& {\color{darkgray} E_6} {\color{darkgray} F_{16}} {\color{darkgray} G_1}.
\end{matrix}  
\end{small}
\end{equation}
 These $10$ triples of lines are among the $45$ tritangent planes of the cubic surface.
The $12$ lines that do not appear in the triangles  are shown in {\color{mybordeaux} red}.
They form a {\em Schl\"afli double-six}:
 \begin{equation}
 \label{eq:doublesix}
  \begin{matrix}
 {\color{mybordeaux} E_1} & {\color{mybordeaux} E_3} & {\color{mybordeaux} E_5}	& {\color{mybordeaux} F_{24}} & {\color{mybordeaux} F_{26}} & {\color{mybordeaux} F_{46}} \\
 {\color{mybordeaux} F_{35}} & {\color{mybordeaux} F_{15}} & {\color{mybordeaux} F_{13}} &  {\color{mybordeaux} G_6} &  {\color{mybordeaux} G_4} & {\color{mybordeaux} G_2} \\
 \end{matrix}
 \end{equation}

We next list the curvy quadrilaterals and curvy pentagons in our arrangements. They are bounded
by the $30$ five-cycles and the $90$ four-cycles in our graph. The $30$ pentagons are 

\begin{footnotesize}
\begin{equation}
\label{eq:pentagons}
\begin{array}{ccccc}
{\color{mybordeaux} F_{26}}{\color{darkgray} F_{45}}{\color{darkgray} F_{36}}{\color{darkgray} F_{12}}{\color{mybordeaux} F_{35}} & {\color{darkgray} F_{12}}{\color{darkgray} F_{45}}{\color{mybordeaux} F_{13}}{\color{mybordeaux} E_3}{\color{darkgray} F_{36}} & {\color{darkgray} F_{25}}{\color{darkgray} F_{34}}{\color{mybordeaux} F_{26}}{\color{mybordeaux} F_{35}}{\color{darkgray} F_{16}} & {\color{mybordeaux} F_{46}}{\color{mybordeaux} G_6}{\color{darkgray} F_{56}}{\color{darkgray} G_5}{\color{darkgray} E_6} & {\color{darkgray} E_6}{\color{darkgray} F_{16}}{\color{darkgray} G_1}{\color{mybordeaux} E_5}{\color{mybordeaux} G_2} \\
{\color{darkgray} E_2}{\color{mybordeaux} F_{26}}{\color{mybordeaux} F_{13}}{\color{darkgray} G_1}{\color{darkgray} F_{12}} & {\color{mybordeaux} E_3}{\color{mybordeaux} G_4}{\color{darkgray} F_{45}}{\color{darkgray} E_4}{\color{darkgray} G_5} & {\color{mybordeaux} E_3}{\color{darkgray} G_1}{\color{darkgray} E_2}{\color{darkgray} F_{12}}{\color{mybordeaux} G_2} & {\color{mybordeaux} F_{35}}{\color{darkgray} G_3}{\color{darkgray} E_4}{\color{darkgray} F_{34}}{\color{mybordeaux} E_3} & {\color{darkgray} F_{23}}{\color{darkgray} F_{56}}{\color{mybordeaux} F_{24}}{\color{mybordeaux} G_4}{\color{darkgray} F_{14}} \\
{\color{darkgray} F_{16}}{\color{darkgray} F_{34}}{\color{mybordeaux} F_{15}}{\color{mybordeaux} E_5}{\color{darkgray} F_{25}} & {\color{darkgray} F_{14}}{\color{mybordeaux} F_{35}}{\color{mybordeaux} F_{24}} {\color{darkgray} F_{36}}{\color{darkgray} F_{25}} & {\color{mybordeaux} E_5}{\color{darkgray} F_{56}}{\color{darkgray} E_6}{\color{darkgray} G_5}{\color{mybordeaux} F_{15}} & {\color{mybordeaux} F_{13}}{\color{darkgray} G_3}{\color{darkgray} E_2}{\color{darkgray} F_{23}}{\color{mybordeaux} E_3} & {\color{darkgray} E_2}{\color{mybordeaux} F_{24}}{\color{mybordeaux} G_2}{\color{darkgray} F_{23}}{\color{darkgray} G_3} \\
{\color{darkgray} F_{23}}{\color{darkgray} F_{56}}{\color{darkgray} F_{14}}{\color{mybordeaux} E_1}{\color{mybordeaux} F_{15}} & {\color{darkgray} F_{14}}{\color{darkgray} F_{25}}{\color{mybordeaux} F_{46}}{\color{mybordeaux} F_{15}}{\color{darkgray} F_{36}} & {\color{darkgray} F_{34}}{\color{mybordeaux} G_4}{\color{mybordeaux} F_{24}}{\color{darkgray} E_4}{\color{darkgray} G_3} & {\color{darkgray} E_4}{\color{mybordeaux} G_2}{\color{mybordeaux} E_5}{\color{darkgray} G_3}{\color{darkgray} F_{34}} & {\color{mybordeaux} E_1}{\color{darkgray} F_{12}}{\color{darkgray} G_1}{\color{darkgray} E_2}{\color{mybordeaux} G_6} \\
{\color{darkgray} E_4}{\color{darkgray} G_5}{\color{darkgray} F_{45}}{\color{mybordeaux} E_5}{\color{mybordeaux} G_6} & {\color{mybordeaux} F_{13}}{\color{darkgray} F_{25}}{\color{darkgray} F_{36}}{\color{darkgray} F_{14}}{\color{mybordeaux} F_{26}} & {\color{darkgray} E_6}{\color{darkgray} F_{16}}{\color{mybordeaux} E_1}{\color{mybordeaux} F_{15}}{\color{darkgray} G_1} & {\color{mybordeaux} F_{13}}{\color{darkgray} F_{56}}{\color{darkgray} F_{14}}{\color{darkgray} F_{23}}{\color{mybordeaux} F_{46}} & {\color{darkgray} E_4}{\color{mybordeaux} F_{46}}{\color{mybordeaux} F_{35}}{\color{darkgray} G_5}{\color{darkgray} F_{45}} \\
{\color{darkgray} E_6}{\color{mybordeaux} G_4}{\color{mybordeaux} E_1}{\color{darkgray} G_5}{\color{darkgray} F_{56}} & {\color{mybordeaux} F_{46}}{\color{mybordeaux} G_6}{\color{darkgray} F_{36}}{\color{darkgray} F_{45}}{\color{darkgray} F_{12}} & {\color{darkgray} F_{25}}{\color{darkgray} F_{34}}{\color{darkgray} F_{16}}{\color{mybordeaux} F_{24}}{\color{mybordeaux} G_2} & {\color{darkgray} F_{16}}{\color{darkgray} G_1}{\color{darkgray} E_6}{\color{mybordeaux} F_{26}}{\color{mybordeaux} G_6} & {\color{mybordeaux} E_1}{\color{mybordeaux} G_4}{\color{darkgray} E_2}{\color{darkgray} F_{23}}{\color{darkgray} G_3}.
\end{array}
\end{equation}
\end{footnotesize}
The $90$ quadrilaterals~are
\begin{footnotesize}
\begin{equation}
\label{eq:quadrilaterals}
\begin{array}{cccccc}
{\color{darkgray} E_2}{\color{mybordeaux} F_{26}}{\color{mybordeaux} G_2}{\color{darkgray} F_{12}} & {\color{darkgray} E_6}{\color{darkgray} F_{36}}{\color{mybordeaux} F_{15}}{\color{mybordeaux} F_{46}} & {\color{mybordeaux} E_5}{\color{mybordeaux} F_{35}}{\color{darkgray} G_5}{\color{darkgray} F_{25}} & {\color{darkgray} F_{34}}{\color{darkgray} F_{56}}{\color{mybordeaux} F_{24}}{\color{mybordeaux} F_{15}} & {\color{darkgray} F_{25}}{\color{mybordeaux} G_2}{\color{mybordeaux} F_{24}}{\color{darkgray} E_2} & {\color{darkgray} E_2}{\color{mybordeaux} G_6}{\color{mybordeaux} E_1}{\color{darkgray} G_5} \\
{\color{mybordeaux} E_5}{\color{mybordeaux} F_{15}}{\color{darkgray} G_5}{\color{darkgray} F_{25}} & {\color{mybordeaux} E_3}{\color{mybordeaux} G_4}{\color{darkgray} F_{45}}{\color{darkgray} F_{23}} & {\color{darkgray} E_4}{\color{mybordeaux} F_{24}}{\color{mybordeaux} G_4}{\color{darkgray} F_{14}} & {\color{darkgray} F_{12}}{\color{darkgray} F_{45}}{\color{mybordeaux} F_{13}}{\color{mybordeaux} F_{46}} & {\color{darkgray} F_{34}}{\color{mybordeaux} G_4}{\color{mybordeaux} E_5}{\color{darkgray} F_{56}} & {\color{darkgray} F_{34}}{\color{darkgray} F_{56}}{\color{mybordeaux} F_{24}}{\color{mybordeaux} G_4} \\
{\color{mybordeaux} E_3}{\color{mybordeaux} G_4}{\color{darkgray} E_2}{\color{darkgray} G_5} & {\color{darkgray} F_{12}}{\color{mybordeaux} F_{35}}{\color{mybordeaux} E_3}{\color{darkgray} F_{34}} & {\color{mybordeaux} E_5}{\color{mybordeaux} F_{35}}{\color{darkgray} G_5}{\color{darkgray} F_{45}} & {\color{darkgray} F_{23}}{\color{darkgray} F_{45}}{\color{mybordeaux} F_{13}}{\color{mybordeaux} E_3} & {\color{mybordeaux} E_1}{\color{darkgray} F_{12}}{\color{darkgray} G_1}{\color{mybordeaux} F_{13}} & {\color{darkgray} E_6}{\color{darkgray} G_3}{\color{mybordeaux} E_5}{\color{mybordeaux} G_2} \\
{\color{darkgray} E_4}{\color{darkgray} G_1}{\color{mybordeaux} E_3}{\color{mybordeaux} G_6} & {\color{mybordeaux} F_{26}}{\color{mybordeaux} F_{35}}{\color{darkgray} F_{12}}{\color{darkgray} F_{34}} & {\color{darkgray} F_{12}}{\color{mybordeaux} F_{35}}{\color{mybordeaux} E_3}{\color{darkgray} F_{36}} & {\color{darkgray} E_6}{\color{darkgray} F_{16}}{\color{mybordeaux} E_1}{\color{mybordeaux} G_2} & {\color{darkgray} F_{16}}{\color{mybordeaux} G_6}{\color{mybordeaux} E_5}{\color{darkgray} F_{45}} & {\color{mybordeaux} F_{46}}{\color{mybordeaux} G_6}{\color{darkgray} F_{56}}{\color{darkgray} F_{12}} \\
{\color{darkgray} F_{14}}{\color{darkgray} G_1}{\color{mybordeaux} E_1}{\color{mybordeaux} F_{13}} & {\color{mybordeaux} F_{15}}{\color{mybordeaux} F_{26}}{\color{darkgray} F_{14}}{\color{darkgray} F_{36}} & {\color{darkgray} E_2}{\color{mybordeaux} F_{24}}{\color{mybordeaux} F_{13}}{\color{darkgray} F_{25}} & {\color{mybordeaux} F_{13}}{\color{darkgray} F_{56}}{\color{darkgray} F_{12}}{\color{mybordeaux} E_1} & {\color{mybordeaux} E_3}{\color{mybordeaux} G_4}{\color{darkgray} E_2}{\color{darkgray} F_{23}} & {\color{mybordeaux} F_{15}}{\color{mybordeaux} F_{26}}{\color{darkgray} E_6}{\color{darkgray} G_1} \\
{\color{darkgray} E_4}{\color{darkgray} G_1}{\color{mybordeaux} E_5}{\color{mybordeaux} G_6} & {\color{mybordeaux} F_{35}}{\color{mybordeaux} F_{46}}{\color{darkgray} F_{25}}{\color{darkgray} G_5} & {\color{darkgray} E_2}{\color{mybordeaux} G_6}{\color{mybordeaux} E_3}{\color{darkgray} G_1} & {\color{darkgray} F_{14}}{\color{darkgray} G_1}{\color{mybordeaux} E_1}{\color{mybordeaux} F_{15}} & {\color{mybordeaux} F_{13}}{\color{darkgray} F_{56}}{\color{darkgray} F_{12}}{\color{mybordeaux} F_{46}} & {\color{darkgray} E_6}{\color{mybordeaux} G_4}{\color{mybordeaux} E_1}{\color{darkgray} G_3} \\
{\color{darkgray} E_6}{\color{mybordeaux} G_4}{\color{mybordeaux} E_5}{\color{darkgray} F_{56}} & {\color{mybordeaux} E_1}{\color{darkgray} F_{16}}{\color{darkgray} F_{23}}{\color{mybordeaux} F_{15}} & {\color{darkgray} F_{23}}{\color{darkgray} F_{45}}{\color{mybordeaux} G_4}{\color{mybordeaux} F_{46}} & {\color{darkgray} E_4}{\color{mybordeaux} F_{46}}{\color{mybordeaux} G_4}{\color{darkgray} F_{14}} & {\color{mybordeaux} F_{13}}{\color{mybordeaux} F_{46}}{\color{darkgray} F_{23}}{\color{darkgray} F_{45}} & {\color{mybordeaux} E_5}{\color{mybordeaux} F_{35}}{\color{darkgray} F_{16}}{\color{darkgray} F_{25}} \\
{\color{mybordeaux} E_1}{\color{darkgray} F_{12}}{\color{darkgray} F_{56}}{\color{mybordeaux} G_6} & {\color{darkgray} E_4}{\color{darkgray} G_5}{\color{mybordeaux} E_3}{\color{mybordeaux} G_6} & {\color{mybordeaux} F_{35}}{\color{darkgray} G_3}{\color{darkgray} E_4}{\color{mybordeaux} F_{24}} & {\color{darkgray} E_6}{\color{darkgray} F_{36}}{\color{mybordeaux} F_{15}}{\color{mybordeaux} F_{26}} & {\color{darkgray} E_4}{\color{mybordeaux} F_{46}}{\color{mybordeaux} F_{35}}{\color{darkgray} F_{14}} & {\color{mybordeaux} F_{26}}{\color{darkgray} F_{45}}{\color{darkgray} F_{36}}{\color{mybordeaux} G_6} \\
{\color{darkgray} E_2}{\color{mybordeaux} F_{26}}{\color{mybordeaux} F_{13}}{\color{darkgray} F_{25}} & {\color{darkgray} E_2}{\color{mybordeaux} G_6}{\color{mybordeaux} E_3}{\color{darkgray} G_5} & {\color{mybordeaux} E_1}{\color{darkgray} G_5}{\color{darkgray} E_2}{\color{mybordeaux} G_4} & {\color{darkgray} E_4}{\color{mybordeaux} F_{24}}{\color{darkgray} F_{14}}{\color{mybordeaux} F_{35}} & {\color{mybordeaux} F_{26}}{\color{darkgray} F_{34}}{\color{darkgray} F_{12}}{\color{mybordeaux} G_2} & {\color{darkgray} F_{25}}{\color{mybordeaux} G_2}{\color{darkgray} E_2}{\color{mybordeaux} F_{26}} \\
{\color{darkgray} F_{14}}{\color{darkgray} F_{23}}{\color{mybordeaux} G_4}{\color{mybordeaux} F_{46}} & {\color{mybordeaux} E_5}{\color{mybordeaux} F_{35}}{\color{darkgray} F_{16}}{\color{darkgray} F_{45}} & {\color{mybordeaux} F_{13}}{\color{darkgray} G_3}{\color{darkgray} F_{36}}{\color{mybordeaux} F_{24}} & {\color{mybordeaux} F_{35}}{\color{darkgray} G_3}{\color{darkgray} F_{36}}{\color{mybordeaux} E_3} & {\color{darkgray} E_6}{\color{mybordeaux} G_4}{\color{mybordeaux} E_5}{\color{darkgray} G_3} & {\color{darkgray} E_6}{\color{darkgray} G_3}{\color{mybordeaux} E_1}{\color{mybordeaux} G_2} \\
{\color{mybordeaux} F_{15}}{\color{darkgray} G_5}{\color{darkgray} E_6}{\color{mybordeaux} F_{46}} & {\color{darkgray} F_{16}}{\color{darkgray} F_{34}}{\color{mybordeaux} F_{15}}{\color{mybordeaux} F_{24}} & {\color{darkgray} E_4}{\color{mybordeaux} G_2}{\color{mybordeaux} E_3}{\color{darkgray} F_{34}} & {\color{darkgray} F_{23}}{\color{darkgray} F_{56}}{\color{mybordeaux} F_{24}}{\color{mybordeaux} F_{15}} & {\color{mybordeaux} F_{35}}{\color{darkgray} F_{14}}{\color{darkgray} F_{25}}{\color{mybordeaux} F_{46}} & {\color{mybordeaux} F_{13}}{\color{darkgray} F_{25}}{\color{mybordeaux} F_{24}}{\color{darkgray} F_{36}} \\
{\color{mybordeaux} F_{26}}{\color{darkgray} F_{45}}{\color{darkgray} F_{16}}{\color{mybordeaux} F_{35}} & {\color{mybordeaux} F_{24}}{\color{mybordeaux} G_2}{\color{darkgray} F_{16}}{\color{darkgray} F_{23}} & {\color{darkgray} F_{23}}{\color{mybordeaux} G_2}{\color{mybordeaux} E_1}{\color{darkgray} G_3} & {\color{mybordeaux} E_1}{\color{darkgray} G_5}{\color{darkgray} F_{56}}{\color{mybordeaux} G_6} & {\color{mybordeaux} F_{46}}{\color{mybordeaux} G_6}{\color{darkgray} F_{36}}{\color{darkgray} E_6} & {\color{darkgray} E_2}{\color{mybordeaux} F_{24}}{\color{mybordeaux} F_{13}}{\color{darkgray} G_3} \\
{\color{mybordeaux} F_{15}}{\color{darkgray} G_5}{\color{darkgray} F_{25}}{\color{mybordeaux} F_{46}} & {\color{mybordeaux} F_{15}}{\color{darkgray} F_{23}}{\color{darkgray} F_{16}}{\color{mybordeaux} F_{24}} & {\color{mybordeaux} F_{13}}{\color{mybordeaux} E_1}{\color{darkgray} F_{56}}{\color{darkgray} F_{14}} & {\color{mybordeaux} E_3}{\color{darkgray} F_{34}}{\color{darkgray} F_{12}}{\color{mybordeaux} G_2} & {\color{mybordeaux} E_5}{\color{mybordeaux} F_{15}}
{\color{darkgray} F_{56}}{\color{darkgray} F_{34}} & {\color{darkgray} E_6}{\color{darkgray} F_{36}}{\color{mybordeaux} F_{26}}{\color{mybordeaux} G_6} \\
{\color{darkgray} E_4}{\color{mybordeaux} G_2}{\color{mybordeaux} E_5}{\color{darkgray} G_1} & {\color{mybordeaux} F_{13}}{\color{mybordeaux} E_3}{\color{darkgray} G_3}{\color{darkgray} F_{36}} & {\color{mybordeaux} F_{26}}{\color{darkgray} F_{45}}{\color{darkgray} F_{16}}{\color{mybordeaux} G_6} & {\color{darkgray} F_{34}}{\color{mybordeaux} G_4}{\color{mybordeaux} E_5}{\color{darkgray} G_3} & {\color{mybordeaux} E_1}{\color{darkgray} F_{16}}{\color{darkgray} F_{23}}{\color{mybordeaux} G_2} & {\color{darkgray} F_{14}}{\color{darkgray} G_1}{\color{mybordeaux} F_{15}}{\color{mybordeaux} F_{26}} \\
{\color{darkgray} F_{16}}{\color{darkgray} G_1}{\color{mybordeaux} E_5}{\color{mybordeaux} G_6} & {\color{darkgray} F_{14}}{\color{darkgray} G_1}{\color{mybordeaux} F_{13}}{\color{mybordeaux} F_{26}} & {\color{darkgray} G_3}{\color{mybordeaux} F_{35}}{\color{mybordeaux} F_{24}}{\color{darkgray} F_{36}} & {\color{darkgray} E_4}{\color{mybordeaux} G_2}{\color{mybordeaux} E_3}{\color{darkgray} G_1} & {\color{darkgray} E_4}{\color{mybordeaux} F_{46}}{\color{mybordeaux} G_4}{\color{darkgray} F_{45}} & {\color{darkgray} F_{25}}{\color{darkgray} F_{34}}{\color{mybordeaux} F_{26}}{\color{mybordeaux} G_2}.
\end{array}
\end{equation}
\end{footnotesize}
At first glance, these lists look complicated. However, the situation is more transparent 
and more beautiful than it may seem.  To reveal this beauty is the objective of the next section.

\section{Schl\"afli Graph, Weyl Group, and Pictures}
\label{sec3}

We begin by introducing the
{\em Schl\"afli graph}~$\mathcal{S}^{10}_{27}$. This graph has
$27$ vertices, one for each of the $27$ lines $E_i,F_{ij},G_j$ on the cubic surface $X$.
Two vertices of $\mathcal{S}^{10}_{27}$ are connected by an edge
if and only if the two corresponding lines intersect.  Hence, the Schl\"afli graph 
$\mathcal{S}^{10}_{27}$ is $10$-regular, and it has $135$ edges.
These edges are listed in Table \ref{tab:circular}. Here is a key result.

\begin{thm} The symmetry group of the
Schl\"afli graph $\mathcal{S}^{10}_{27}$ is generated by the symmetric group $S_6$, 
which acts by permuting the labels $\{1,2,3,4,5,6\}$, together with one involution
\begin{equation}
\label{eq:cremona}
( {\color{mybordeaux}  F_{13}} \, {\color{darkgray} G_5}) \,
( {\color{mybordeaux}  F_{15}} \, {\color{darkgray} G_3}) \,
( {\color{mybordeaux}  F_{35}} \, {\color{darkgray} G_1}) \,
( {\color{mybordeaux}  F_{24}} \, {\color{darkgray} E_6 }) \,
( {\color{mybordeaux}  F_{26}} \, {\color{darkgray} E_4 }) \,
( {\color{mybordeaux}  F_{46}} \, {\color{darkgray} E_2 }) .
\end{equation}
This symmetry group has order $51840$. It is the 
{\em Weyl group} $W({\rm E}_6)$ of the {\em root system} ${\rm E}_6$. 
\end{thm}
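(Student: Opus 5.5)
The approach is to identify the $27$ lines with exceptional classes in the Picard lattice of the blow-up, and to count automorphisms by an orbit–stabilizer argument, with matching upper and lower bounds of $51840$.

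\textbf{Adjacency rule and the generators.} From Table \ref{tab:circular} I read off the rule.
\begin{itemize}
\item $E_i$ meets $F_{ij}$ for every $j$, and $G_j$ for every $j\neq i$.
\item $F_{ij}$ meets $F_{kl}$ exactly when $\{i,j\}\cap\{k,l\}=\emptyset$.
\item $G_k$ meets $F_{ij}$ exactly when $k\in\{i,j\}$.
\item Two lines of type $E$, or two of type $G$, never meet.
\end{itemize}
This rule is visibly $S_6$-invariant.

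The involution (\ref{eq:cremona}) is the quadratic Cremona transformation based at $E_2,E_4,E_6$. It swaps $E_i\leftrightarrow F_{jk}$ for $\{i,j,k\}=\{2,4,6\}$, and $F_{ij}\leftrightarrow G_k$ for $\{i,j,k\}=\{1,3,5\}$. It fixes $E_1,E_3,E_5$, $G_2,G_4,G_6$, and the nine $F_{ij}$ with one index in each triple. Checking that it preserves adjacency is a short finite case check over pairs of types.

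\textbf{Lattice model.} To identify the group with $W({\rm E}_6)$, I pass to $\mathrm{Pic}$ of the blow-up, $\Z h\oplus\Z e_1\oplus\cdots\oplus\Z e_6$ with the form $h^2=1$, $e_i^2=-1$. Here $E_i=e_i$, $F_{ij}=h-e_i-e_j$ and $G_k=2h-\sum_{l\ne k}e_l$. Two lines meet iff their classes have intersection number $1$, and they are skew iff it is $0$.

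The transpositions $(ij)$ act as reflections in the roots $e_i-e_j$. The involution (\ref{eq:cremona}) is the reflection in $h-e_2-e_4-e_6$. These six roots form an ${\rm E}_6$ Dynkin diagram, so the subgroup $H$ they generate is $W({\rm E}_6)$, of known order $51840$. This subgroup acts on the $27$ exceptional classes, hence faithfully on the graph, since a linear map fixing all $27$ classes, which span the lattice, is trivial.

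\textbf{Upper bound on $\mathrm{Aut}$.} It remains to show $|\mathrm{Aut}(\mathcal{S}^{10}_{27})|\le 51840$. $H$ is vertex-transitive: $S_6$ has the three orbits $E$, $F$, $G$, and (\ref{eq:cremona}) joins them. So it suffices to bound the stabilizer of one vertex $v$ by $1920$.

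The $16$ non-neighbours of $v$ induce the Clebsch graph, which is $5$-regular, with automorphism group of order $1920=2^4\cdot 5!$. The $10$ neighbours of $v$ split into $5$ pairs; each pair completes a tritangent plane with $v$. Every neighbour $w$ is adjacent to exactly $8$ of the $16$ non-neighbours, and I would check that these $8$-sets are pairwise distinct. Granting that, an automorphism fixing $v$ and all $16$ non-neighbours fixes everything, so the stabilizer embeds in the automorphism group of the Clebsch graph. Hence $|\mathrm{Aut}|\le 27\cdot1920=51840=|H|$, and $\mathrm{Aut}=H$.

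\textbf{Main obstacle.} The delicate step is the Clebsch graph part: verifying the induced subgraph structure and citing or proving $|\mathrm{Aut}|=1920$ for it. One way is to view it as the folded $5$-cube, whose automorphisms are $2^4\rtimes S_5$, and to check the upper bound via the stabilizer of a vertex acting faithfully on its $5$ neighbours. If this gets messy, I would fall back on a direct computer verification of the automorphism group order, which is consistent with the paper's computational style.
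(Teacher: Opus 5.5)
Your proof is correct. It also does something the paper does not: the paper states this theorem as a classical fact and gives no proof. The neighbouring Proposition~\ref{prop:Schlafli} is justified only by ``a direct computation'', and Section~\ref{sec9} simply asserts that transpositions and the Cremona transformation generate $W({\rm E}_6)$. Your argument has two halves. The lower bound realizes $S_6$ and the involution (\ref{eq:cremona}) as reflections of ${\rm Pic}$ in the roots $e_i-e_j$ and $h-e_2-e_4-e_6$. Your identification of (\ref{eq:cremona}) with the Cremona transformation based at $E_2,E_4,E_6$ is right. The upper bound $27\cdot 1920$ comes from embedding a vertex stabilizer into the automorphism group of the induced Clebsch graph. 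The paper can skip all this because the result is classical. Your route makes it self-contained and explains why exactly these generators suffice.

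One sentence needs repair. With the obvious five transpositions $(12),(23),(34),(45),(56)$, the six roots do \emph{not} form an ${\rm E}_6$ diagram, because $h-e_2-e_4-e_6$ has intersection number $\pm 1$ with every $e_i-e_{i+1}$. Take instead the chain $e_2-e_4,\,e_4-e_6,\,e_6-e_1,\,e_1-e_3,\,e_3-e_5$. Then $h-e_2-e_4-e_6$ is orthogonal to every root except the middle one $e_6-e_1$, which gives ${\rm E}_6$, and these transpositions still generate $S_6$.

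The steps you left open are routine at $v=E_1$. One vertex suffices, since $|{\rm Aut}|\le 27\,|{\rm Stab}(v)|$ needs no transitivity. First, the $8$-sets of $F_{1j}$ and $G_j$ are complementary, and they are distinct for different $j$. Second, for the Clebsch graph you only need $|{\rm Aut}|\le 1920$. This holds because the ten vertices at distance two from $G_1$ are the $F_{jk}$, each determined by its two neighbours $E_j,E_k$. So the stabilizer of $G_1$ embeds in $S_5$, giving $|{\rm Aut}|\le 16\cdot 120$.
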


We conclude that the symmetry group
$W({\rm E}_6)$ acts transitively on the labels $E_i,F_{ij},G_j$ of the $27$ lines.
Each transposition in $S_6$ acts as a  product of six $2$-cycles, just like (\ref{eq:cremona}). 
 For instance, the transposition $1 \leftrightarrow 2$ acts as
 $\,({\color{mybordeaux} E_1}\,{\color{darkgray} E_2})({\color{mybordeaux} G_2}\,{\color{darkgray} G_1})({\color{mybordeaux} F_{13}} \,{\color{darkgray} F_{23}})( {\color{mybordeaux} F_{24}}\,{\color{darkgray} F_{14}}) ({\color{mybordeaux} F_{15}}\,{\color{darkgray} F_{25}})({\color{mybordeaux} F_{26}}\,{\color{darkgray} F_{16}})$.
 We obtain the following result for the 
$W({\rm E}_6)$-action on the labels of the $130$ regions of~$X \backslash Y$.

\begin{prop} \label{prop:Schlafli}
The Schl\"afli graph $\mathcal{S}^{10}_{27}$ has $45$ triangles,
namely the $W({\rm E}_6)$-orbit of (\ref{eq:10triangles}).
It has $1080$ four-cycles, namely the
$W({\rm E}_6)$-orbit of (\ref{eq:quadrilaterals}).
There are two orbits of five-cycles.
The orbit of (\ref{eq:pentagons}) has size $4320$. The orbit
of the pentagon $\,{\color{darkgray} F_{14}} {\color{mybordeaux} F_{15}} {\color{darkgray} F_{23}} {\color{mybordeaux} F_{24}} {\color{darkgray} E_4}\,$ has size $2592$.
\end{prop}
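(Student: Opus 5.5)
The plan is to combine the strongly regular structure of $\mathcal{S}^{10}_{27}$ with orbit–stabilizer counting for $W({\rm E}_6)$. Here ``$k$-cycle'' means a chordless (induced) cycle, as in Section \ref{sec2}.

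\emph{Parameters.} From Table \ref{tab:circular} one checks that $\mathcal{S}^{10}_{27}$ is strongly regular with parameters $(27,10,1,5)$:
\begin{itemize}
\item two intersecting lines lie in a unique tritangent plane, so they have exactly one common neighbour;
\item two skew lines, for instance $E_1,E_2$, have exactly the five common neighbours $F_{12},G_3,G_4,G_5,G_6$.
\end{itemize}
By transitivity of $W({\rm E}_6)$ on intersecting pairs and on skew pairs, it suffices to verify these two facts for one representative pair each.

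\emph{Triangles.} Since $\lambda=1$, every one of the $135$ edges lies in exactly one triangle. Hence there are $135/3=45$ triangles, and these are exactly the tritangent planes. The group $W({\rm E}_6)$ is transitive on them: up to the $S_6$ action the triangles are of type $E_iF_{ij}G_j$ or $F_{ij}F_{kl}F_{mn}$, and the involution (\ref{eq:cremona}) maps a triangle of the first type to one of the second. So the $45$ triangles form the orbit of (\ref{eq:10triangles}).

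\emph{Four-cycles.} An induced $4$-cycle is a skew pair $\{a,c\}$ together with two non-adjacent common neighbours $\{b,d\}$, and it is counted once for each of its two diagonals.
\begin{itemize}
\item There are $27\cdot 16/2=216$ skew pairs.
\item For $\{E_1,E_2\}$ the five common neighbours $F_{12},G_3,\dots,G_6$ are pairwise skew, since $F_{12}$ meets only $G_1,G_2$ among the $G$'s. So every skew pair has $\binom52=10$ choices of $\{b,d\}$.
\end{itemize}
This gives $216\cdot 10/2=1080$ four-cycles.

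For transitivity I will use orbit–stabilizer. I will show that the stabilizer of one explicit $4$-cycle from (\ref{eq:quadrilaterals}) has order $51840/1080=48$. Equivalently, I will show that the stabilizer of $\{E_1,E_2\}$, of order $51840/216=240$, acts transitively on the $10$ pairs from $\{F_{12},G_3,\dots,G_6\}$, so that one stabilizer orbit covers everything. The subgroup $S_2\times S_4\subset S_6$ already handles the pairs $\{G_i,G_j\}$ and the pairs $\{F_{12},G_i\}$ separately. The remaining step is to exhibit one conjugate of (\ref{eq:cremona}) that fixes $\{E_1,E_2\}$ and moves $F_{12}$ to some $G_i$; I expect to find it by a short search in $S_6$.

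\emph{Five-cycles.} Here local counting is messier, so I will proceed as follows.
\begin{enumerate}
\item Enumerate induced $5$-cycles directly, by computer from the adjacency in Table \ref{tab:circular}: choose an induced path $a\,b\,c$ and count pairs $d,e$ with $d\sim c$, $e\sim a$, $d\sim e$, and no chords. This should give $6912=4320+2592$ five-cycles.
\item Compute the $W({\rm E}_6)$-orbits, using the generators $S_6$ and (\ref{eq:cremona}) as a permutation group on $27$ points (for instance in GAP).
\item Separate the two orbits by an invariant. In the pentagon $F_{14}F_{15}F_{23}F_{24}E_4$ the five non-edges, i.e.\ skew pairs, extend to a double-six-like configuration. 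For the pentagons in (\ref{eq:pentagons}) I will check instead whether the five vertices, or the five ``diagonal'' lines, lie in a common double-six; which of the two holds for which orbit is to be determined.
\item Confirm the orbit sizes through the stabilizers, of orders $12$ and $20$ respectively. For the pentagon $F_{14}F_{15}F_{23}F_{24}E_4$ I expect a visible $D_5$ symmetry giving the stabilizer of order $20$.
\end{enumerate}

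The main obstacle is this five-cycle step. Producing an explicit stabilizer element of order $5$, and proving that no element identifies the two classes, will require either the computer check or a careful double-six argument.
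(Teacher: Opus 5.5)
Your treatment of triangles and four-cycles is correct, and it is more conceptual than the paper, whose proof is only a direct orbit computation. Since $\lambda=1$, every $4$-cycle is automatically chordless. The involution you want also exists: conjugating (\ref{eq:cremona}) to be centered at $\{3,4,5\}$ gives $(E_3\,F_{45})(E_4\,F_{35})(E_5\,F_{34})(F_{12}\,G_6)(F_{16}\,G_2)(F_{26}\,G_1)$, which fixes $E_1,E_2$ and sends $F_{12}$ to $G_6$.

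The genuine gap is the five-cycle step: your convention that cycles of $\mathcal{S}^{10}_{27}$ are chordless is wrong there. The $130$ polygons are chordless cycles of $\mathcal{G}(X)$, whose vertices are intersection points. Their boundary lines need not form an induced cycle of $\mathcal{S}^{10}_{27}$. In $F_{14}F_{35}F_{24}F_{36}F_{25}$ the lines $F_{14}$ and $F_{36}$ meet away from the pentagon, closing the triangle $F_{14}F_{25}F_{36}$. This is the unique $3$-cycle $H_1$ used in Section~\ref{sec4}, and every pentagon of (\ref{eq:pentagons}) carries exactly one such chord. So the enumeration in your step 1 returns $2592$ cycles, not $6912$, and they form a single orbit. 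The orbit of (\ref{eq:pentagons}) never appears. The invariant you propose in step 3, built on five diagonals, cannot describe it either, since such a pentagon has only four non-edges.

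The repair stays inside your strongly regular framework.
\begin{itemize}
\item With $\lambda=1$ a $5$-cycle has at most one chord. Two chords either share a vertex or cross, and in both cases some adjacent pair acquires two common neighbours. The number of chords is $W({\rm E}_6)$-invariant and separates the two classes.
\item A one-chord $5$-cycle consists of a chord edge ($135$ choices), its forced third triangle vertex, and a $4$-cycle through the chord ($1080\cdot 4/135=32$ choices). This gives $4320$.
\item For induced ones, take one of the $27\cdot 10\cdot 8=2160$ ordered induced paths $v_1v_2v_3$. Let $c_1=v_2,c_2,\dots,c_5$ be the common neighbours of $v_1,v_3$. Write the triangles through $v_3$ and $v_1$ as $v_3c_im_i$ and $v_1c_in_i$.
\item A vertex off a triangle is adjacent to exactly one of its vertices. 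Indeed, by $\lambda=1$ the $3\cdot 8=24$ edges leaving a triangle hit $24$ distinct vertices.
\item Hence $v_4\in\{m_2,\dots,m_5\}$ and $v_5\in\{n_2,\dots,n_5\}$, with $m_j\sim n_k$ if and only if $j\neq k$. This gives $12$ extensions and $2160\cdot 12/10=2592$ induced $5$-cycles.
\end{itemize}
Transitivity on each class still needs an argument or a computation, namely stabilizers of orders $12$ and $20$. The pointwise stabilizers of the five lines have orders $6$ and $2$; the latter is generated by the Cremona involution centered at $\{1,2,6\}$.

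Finally, the listed order $F_{14}F_{15}F_{23}F_{24}E_4$ is not a cyclic order, because $F_{14}$ and $F_{15}$ are skew. The induced cycle is $F_{14}F_{23}F_{15}F_{24}E_4$.
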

\begin{proof}
These orbits are found via a direct computation. 
\end{proof}

A cycle of ${\cal S}^{10}_{27}$ may or may not appear as a curvy polygon in any particular cubic surface $X$.
If it does then we call it {\em exposed}. The exposed cycles are the chordless cycles of the graph ${\cal G}(X)$ constructed in Section \ref{sec2}. For instance,  among the $4320$ pentagons in the first orbit,
precisely $30$ are exposed in $X$. The pentagons in the second orbit are never exposed.
The stabilizer of the exposed triangles (\ref{eq:10triangles}) is a subgroup
of order $120$ in $W({\rm E}_6)$. Hence $\mathcal{S}^{10}_{27}$ has $432 $ distinct configurations
of $10$ exposed triangles for the various real cubic surfaces $X$.
We shall return to this in our discussion of the moduli space
and its positive geometry structure.

\begin{figure}[h]
$$ \!\!\!\!\!\!\!
\includegraphics[height = 6.8cm]{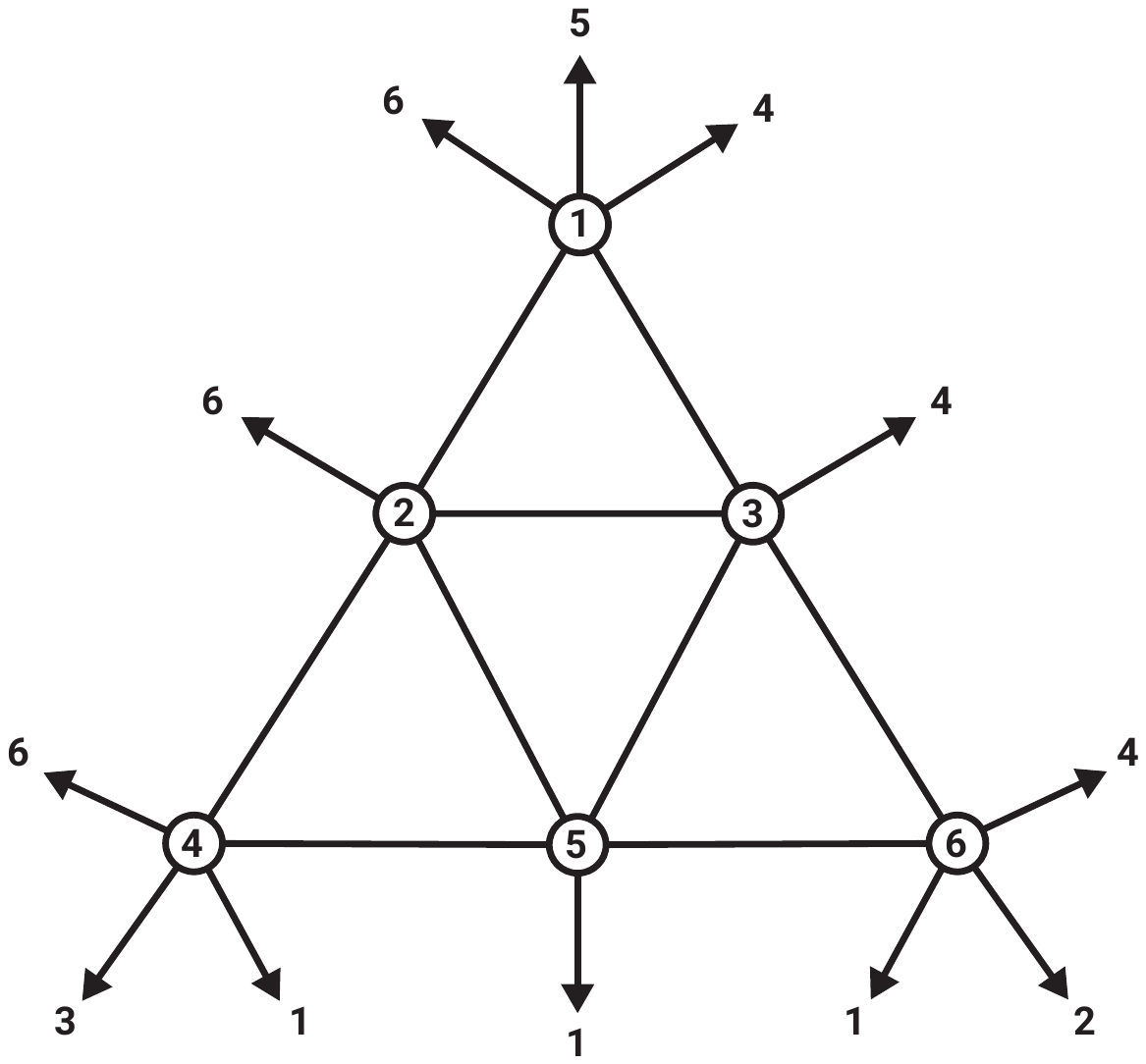} \, \, \, 
\includegraphics[height = 6.8cm]{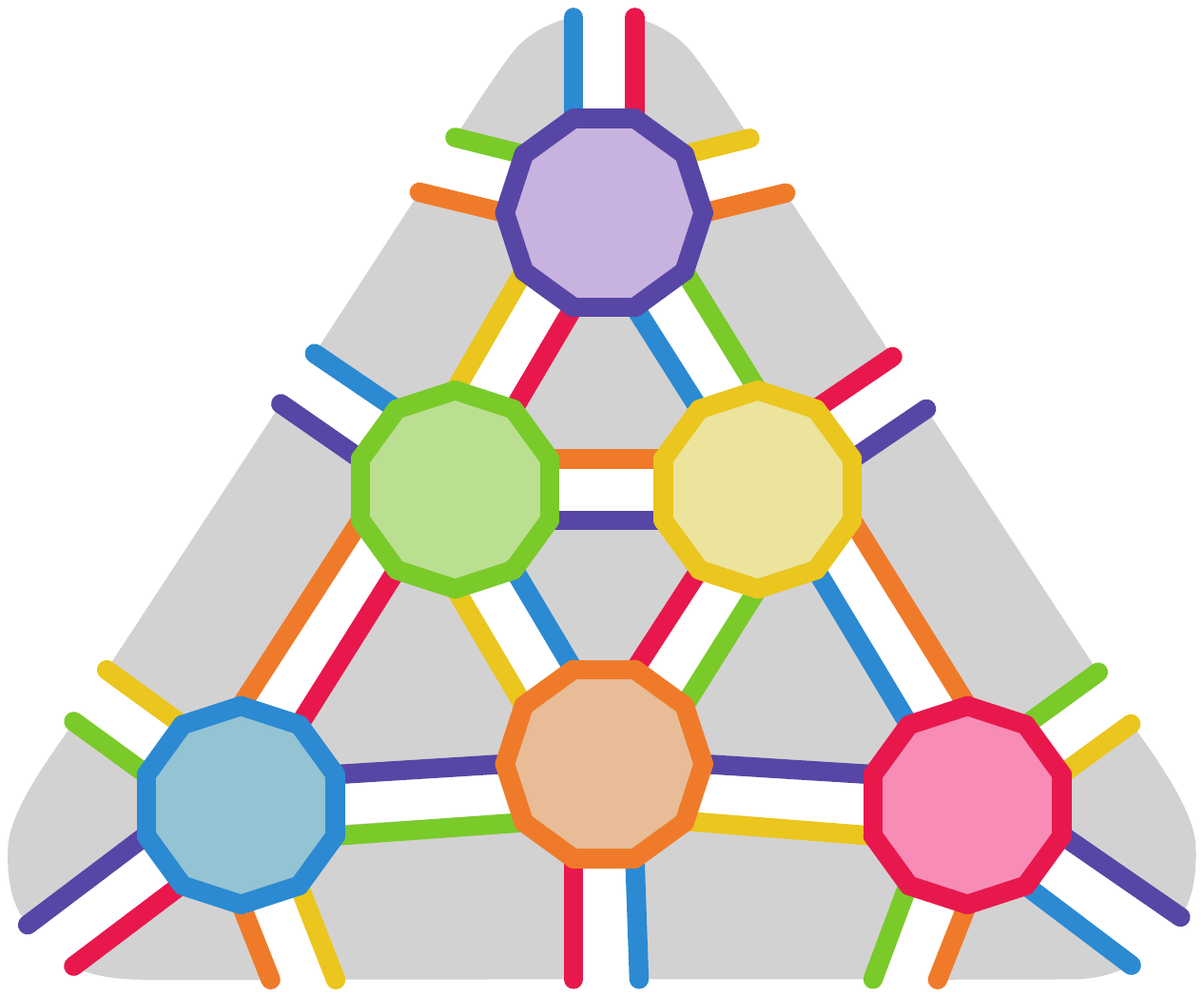}
$$
\caption{Minimal triangulation of the real projective plane (left).
Blowing up the six points  $E_1,E_2,\ldots,E_6$ and
inserting  the six conics $G_1,G_2,\ldots,G_6$ yields a Schl\"afli double-six (right).}
\label{fig:endler1}
\end{figure}

We next discuss a pretty picture that shows the $130$  regions
on the cubic surface. Using the involution (\ref{eq:cremona}), we first replace the
double-six in (\ref{eq:doublesix}) with the standard double-six:
 \begin{equation}
 \label{eq:doublesix2}
  \begin{matrix}
  E_1 & E_2 & E_3 & E_4 & E_5 & E_6 \\
  G_1 & G_2 & G_3 & G_4 & G_5 & G_6 
  \end{matrix}
 \end{equation}
This corresponds to six points
$E_1, \ldots, E_6$ in the projective plane $\PP_\R^2$ together with the
six conics $G_1,\ldots,G_6$, where $G_i$ passes though the points other than $E_i$.
Figure \ref{fig:endler1} shows
this arrangement. On the left we see the minimal triangulation
of $\PP_\R^2$. This has six vertices, $15$ edges and $10$ triangles.
The six vertices are the points $E_1,\ldots, E_6$ that are to be blown up. The outward pointing arrows are to be identified as indicated by their label. For instance, the upward pointing arrow leaving $E_1$ is an edge which connects to $E_5$ from below. 

The picture on the right in Figure \ref{fig:endler1} is a cartoon of $\PP^2_\R$ after the blow-up.
Each point $E_i$ has been replaced by a circle. The circle is a $10$-gon with opposite edges
identified, so it is a $5$-cycle. The conic $G_i$ is also $5$-cycle. Here, $G_i$ is 
 shown in the same color as~$E_i$.
We obtain a subdivision of our cubic surface $X$ into $25$ polygons,
namely $10$ gray hexagons and $15$ white quadrilaterals. We will see in Example \ref{ex:cr17} that these are positive geometries, just like the $130$ polygons in Theorem \ref{thm:130}.
We count $30$ vertices and $60$ edges, namely ten of each color.

We now add the $15$ lines $F_{ij}$ that are not in  (\ref{eq:doublesix2})
to the colorful picture in Figure \ref{fig:endler1}.
They are indicated by the thin line segments in Figure \ref{fig:endler2}.
Each of the $15$ quadrilaterals in Figure~\ref{fig:endler1} is divided into
four small quadrilaterals.
Each of the $10$ shaded hexagons in Figure~\ref{fig:endler1} is divided
into either six or seven polygons. 
The $10$ question marks indicate that indecision.

\begin{figure}[h]
$$
\includegraphics[width = 0.77\linewidth]{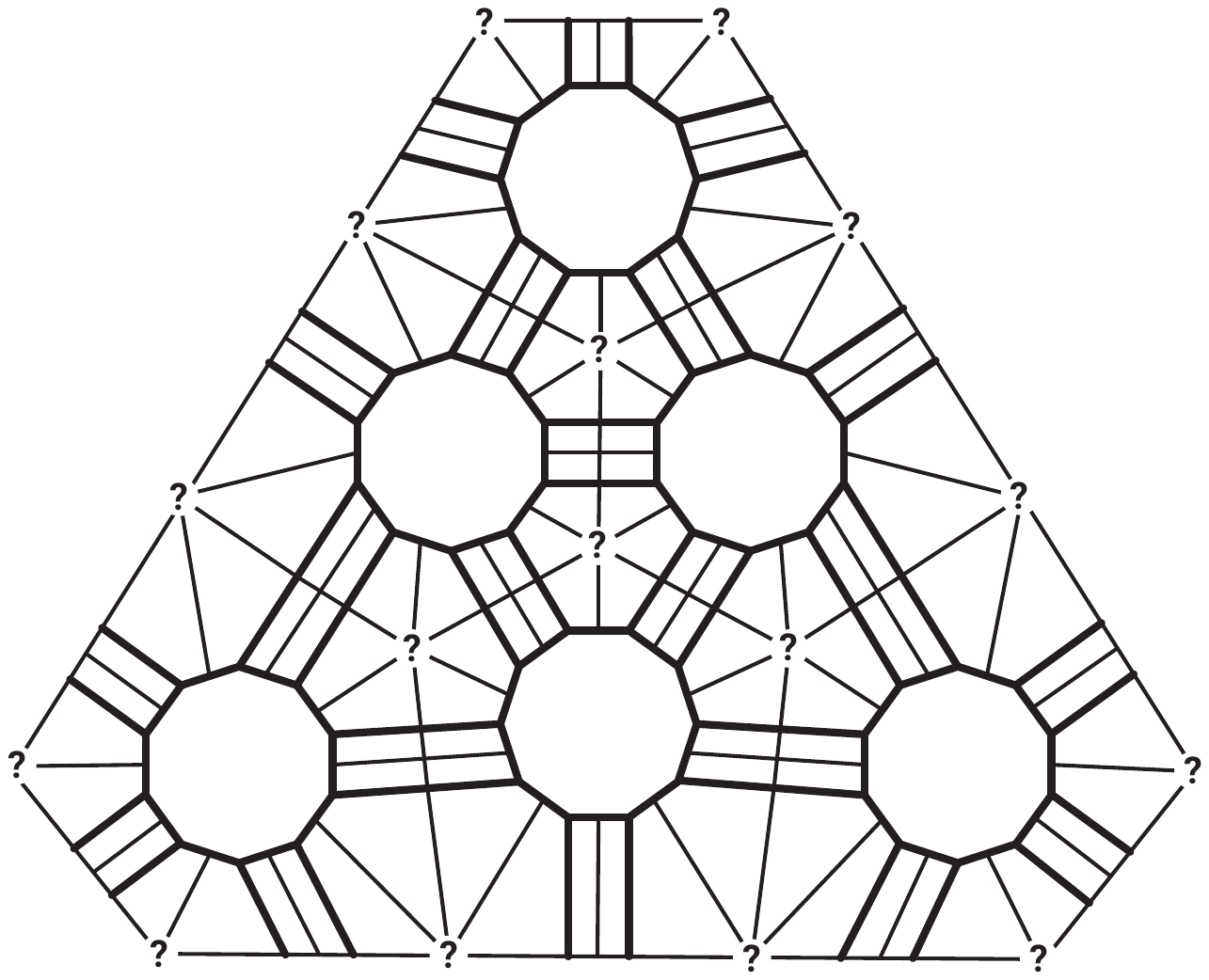}
$$
\caption{The $130$ regions on a real cubic surface.
We insert $15$ thin lines into $12$ colorful lines from the double-six in Figure \ref{fig:endler1}.
Question marks indicate triangles or Eckardt points.}
\label{fig:endler2}
\end{figure}

If the three lines in a hexagon meet, then the
question mark becomes an  {\em Eckardt point}, and the
hexagon splits into six quadrilaterals. Theorem \ref{thm:130} refers
to the generic case when each
question mark is a triangle. Here the  hexagon splits
into $3$ pentagons, $3$ quadrilaterals, and the triangle.
Figure \ref{fig:endler2} is a picture of the graph ${\cal G}(X)$ from Section \ref{sec2}. It shows all $130$ polygons,
namely $10$ triangles, $90$ quadrilaterals and $30$ pentagons.
Each of the $27$ lines is represented by a cycle of length $10$. 
 The intersection graph of these cycles is the Schl\"afli graph $\mathcal{S}^{10}_{27}$.
 Our readers are encouraged to spot all
  $270$ edges and all $135$ vertices in Figure \ref{fig:endler2}.

\section{Differential Forms}
\label{sec4}
A distinguished feature of a positive geometry is the fact that it comes with a canonical differential form. Since our geometries
live in $\PP^2$ and in $\PP^3$, we start by explaining
top-dimensional differential forms on these projective spaces. Such a form on $\PP^3$ looks like
\begin{equation}
\label{eq:formomega3}
\begin{matrix}
 \omega \, = \, r(y_0,y_1,y_2, y_3) \, \Omega_{\mathbb{P}^3}, \,\,\,
 {\rm where}
 \qquad \qquad\qquad \qquad\qquad \qquad \qquad \qquad
  \smallskip \\ \qquad
 \Omega_{\mathbb{P}^3} \,=\, 
 {\rm d}y_0 \!\wedge\! {\rm d} y_1 \! \wedge\! {\rm d} y_2 
 -  {\rm d}y_0 \!\wedge\! {\rm d} y_1 \! \wedge\! {\rm d} y_3 
 +  {\rm d}y_0 \!\wedge\! {\rm d} y_2 \! \wedge\! {\rm d} y_3 
-  {\rm d}y_1 \!\wedge\! {\rm d} y_2 \! \wedge\! {\rm d} y_3 .
\end{matrix}
\end{equation}
The coefficient $r(y)$ is a homogeneous rational function of degree $-4$.
Hence $r(y) = g(y)/h(y)$ where $g(y)$ and $h(y)$ are homogeneous polynomials
satisfying ${\rm degree}(h) = {\rm degree}(g) + 4$.

Similarly, a differential $2$-form on the projective plane $\PP^2$ is an expression
\begin{equation}
\label{eq:formomega2}
\begin{matrix}
 \omega \, = \, r(x_0,x_1,x_2) \, \Omega_{\mathbb{P}^2}, \,\,\,
 {\rm where} \quad
  \Omega_{\mathbb{P}^2} \,=\, 
 {\rm d}x_0 \!\wedge\! {\rm d} x_1 -
 {\rm d}x_0 \!\wedge\! {\rm d} x_2 +
{\rm d}x_1 \!\wedge\! {\rm d} x_2.
\end{matrix}
\end{equation}
The coefficient $r(x)$ is a homogeneous rational function of degree $-3$ in $x = (x_0,x_1,x_2)$.

The purpose of a $2$-form $\omega$ is to be integrated on a $2$-dimensional integration region~$\Gamma \subset \mathbb{P}^2$. In the affine chart $\{x_0 \neq 0\}$, the integral is the familiar one from multivariable calculus:
\begin{equation}
\label{eq:calculus2}
\int_{\Gamma} \omega \,\,=\,
\int_{\Gamma} r(x_0,x_1,x_2) \, \Omega_{\mathbb{P}^2} \, = \, \int_{\Gamma} r(1,x_1,x_2) \, {\rm d} x_1 \wedge {\rm d} x_2 .
\end{equation}
Conversely, every rational form $\omega$ on $\mathbb{P}^2$ is uniquely determined by an affine
expression of the form $\tilde{r}(x_1,x_2) \,  {\rm d} x_1 \!\wedge \! {\rm d} x_2$ for $\tilde{r} \in \mathbb{C}(x_1,x_2)$.
To pass from $\C^2 $ to $\PP^2$, one homogenizes by setting
 $x_i \mapsto x_i/x_0$ for $i=1,2$.
The analogous statement holds for $3$-forms on $\C  ^3$ and  $\PP^3$.

\begin{example}[Pentagon] \label{ex:pentagon}
We consider the following rational $2$-form on the plane $\PP^2$:
\begin{equation}
\label{eq:pentagonform}
 \omega \,\, = \,\,\frac{ 65 \, F_{23} F_{45} - 104 \, F_{24}F_{35}}{F_{14}F_{35}F_{24}F_{36}F_{25}} \, \Omega_{\mathbb{P}_2} .
 \end{equation}
The numerator  is a conic through the
four points $E_2, E_3,E_4, E_5 $. Indeed, the two summands generate
the ideal of these points. The coefficients are chosen so that the
conic also passes through the intersection  point $F_{14} \cap F_{36}$. 
The form $\omega$ is the canonical form of the pentagon
formed by the lines in the denominator. That pentagon is shown in 
Figure~\ref{fig:firstpentagon}. The conic in the numerator is the \emph{adjoint curve}. The pentagon, together with its canonical form $\omega$, is our first
example of a {\em positive geometry}. This example illustrates the key property of the canonical form of a
polygon $S$: it has poles of order one along the five boundary lines of $S$. 

The conics $G_1, \ldots, G_6$ in Figure \ref{fig:firstpentagon} can also be obtained as the adjoint curves of curvy pentagons on $X$. The cartoon in Figure \ref{fig:pentagon} shows the conic $G_6$ passing through the points $E_1, E_2, E_3,E_4, E_5$. It appears in the numerator of the canonical form of the gray pentagon.
\end{example}

\begin{figure}[h]
\centering
\includegraphics[width = 0.45\linewidth]{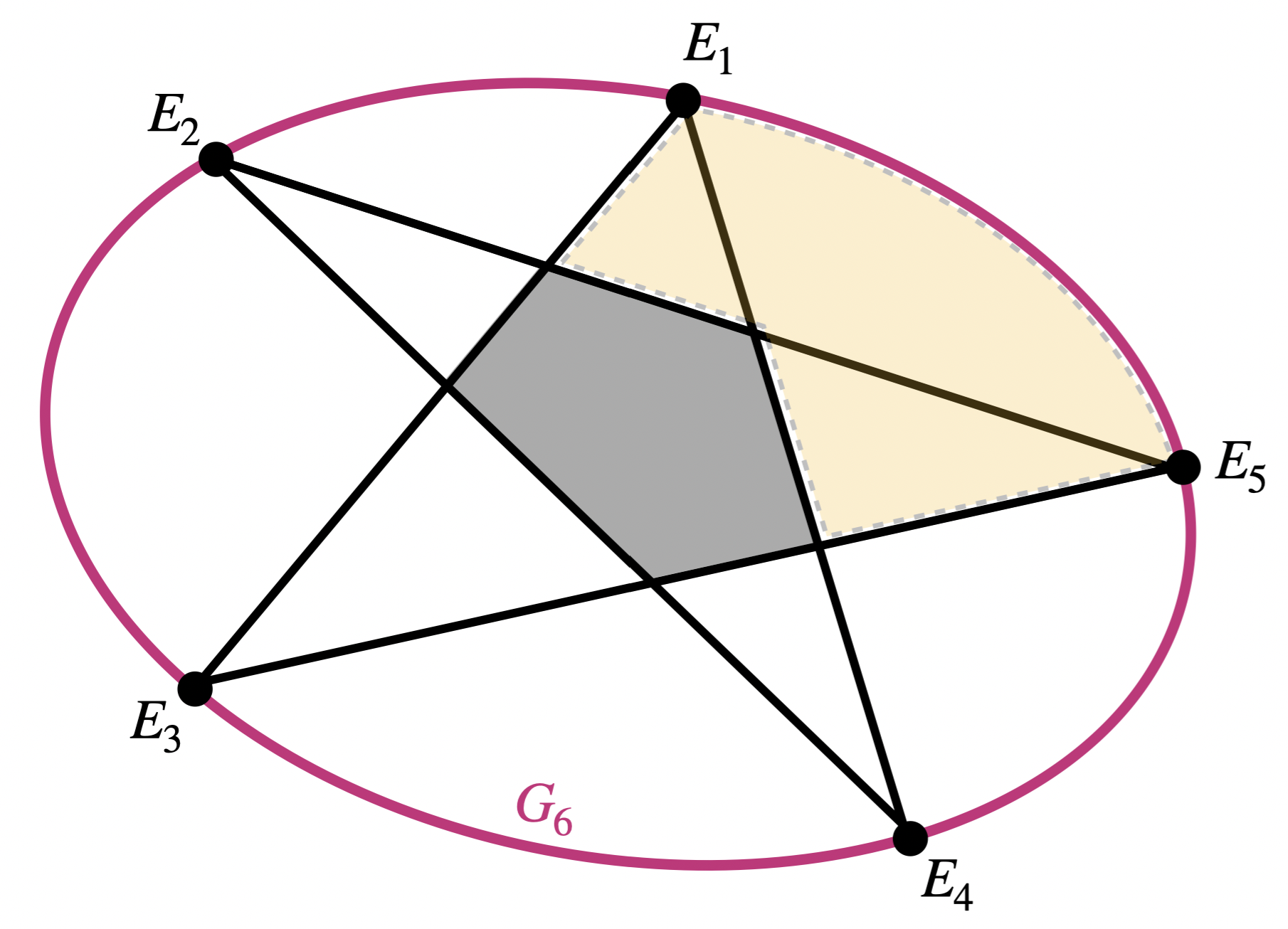}
\caption{The gray shape and the yellow shape are positive geometries on the cubic~surface.}
\label{fig:pentagon}
\end{figure}
We now return to (\ref{eq:formomega3}) and consider a
homogeneous rational function $r(y)$ of degree $-4$.
Let $\Gamma$ be a semialgebraic region in $\PP^3_\R$ which does not intersect the poles of $r(y)$.
We view $\Gamma$ as a homology class, and we view
$\omega$ as a cohomology class.
Their pairing is the integral
\begin{equation}
\label{eq:calculus3}
\int_{\Gamma} \omega \,\,=\,
\int_{\Gamma} r(y_0,y_1,y_2,y_3) \, \Omega_{\mathbb{P}^3} \, = \, \int_{\Gamma} r(1,y_1,y_2,y_3) 
\, {\rm d} y_1 \wedge {\rm d} y_2  \wedge {\rm d} y_3.
\end{equation}

There are two natural constructions for turning the $3$-dimensional objects in (\ref{eq:calculus3})
into $2$-dimensional objects. For the region $\Gamma$ this is achieved
by passing to its {\em boundary} $\partial \Gamma$. For the form $\omega$ this is
achieved by passing to its {\em residue} along an irreducible component of $\partial \Gamma$.

We fix a homogeneous cubic $f(y)$, such as (\ref{eq:cubicsurface}),
that defines a smooth cubic surface $ X = V(f)$ in $\PP^3$.
Let $\omega$ be a rational $3$-form on $\PP^3$ which has
a pole of order one along $X$. For the definition of order see
\cite[Chapter II, \S 6]{Hartshorne}. Under these assumptions, we can write
\begin{equation} \label{eq:decompomega} \omega \,\, =\, \,  \eta \wedge \frac{{\rm d} f}{f} \,+\, \eta',\end{equation}
where $\eta$ is a $2$-form and $\eta'$ is a $3$-form on $\PP^3$, both having no poles along $X$. 
This decomposition gives a rational $2$-form on the cubic surface $X$. By restriction from $\PP^3$, we obtain
\begin{equation}
\label{eq:residue}
 {\rm Res}_{X} \, \omega \, = \, \eta_{|X}.
\end{equation}
This $2$-form is called the  \emph{Poincar\'e residue} of $\omega$ along $X$.
We shall use this residue operator to represent the $2$-forms on $X$
that are needed for the space $\Omega^2_{\rm log}(X \backslash Y)$ in Theorem \ref{thm:130}.
    
Explicitly, we can represent the $3$-form in (\ref{eq:decompomega}) by two homogeneous polynomials
$g(y)$ and $h(y)$ in $y= (y_0,y_1,y_2,y_3)$ whose degrees satisfy ${\rm deg}(h) = {\rm deg}(g) + 1$. Namely, we write
\begin{equation} \label{eq:omegacubicsurface} \omega \, = \, \frac{g(y)}{f(y) h(y)} \, \Omega_{\mathbb{P}^3}. \end{equation}
In the affine coordinates, this equals   (\ref{eq:calculus3}), where
$ r(1,y_1,y_2,y_3) \, = \, \frac{g(1,y_1,y_2,y_3)}{f(1,y_1,y_2,y_3)h(1,y_1,y_2,y_3)} $.

\smallskip

The residue of $\omega$ along $X$ is the restriction to $X$ of the $2$-form on $\mathbb{P}^3$ given locally by
\begin{equation} \label{eq:localexpression}  \frac{g(1,y_1,y_2,y_3)}{  \frac{\partial f(1,y_1,y_2,y_3)}{\partial y_3} \, h(1,y_1,y_2,y_3)}  \, {\rm d}y_1 \wedge {\rm d} y_2 . \end{equation}
The restriction can be written conveniently  as a rational form on $\mathbb{P}^2$ via the parametrization \eqref{eq:blowup}. To compute this rational form, we replace $y_i $ by $ y_i/y_0$ in \eqref{eq:localexpression} and 
substitute \eqref{eq:octamodel}.

Computing the canonical form of a positive geometry may in general be a hard task. For the triangles, quadrilaterals and pentagons on a cubic surface, we now provide a combinatorial recipe. We take advantage of the group action on polygons
described in Proposition \ref{prop:Schlafli}. It suffices to list only one canonical form per orbit.
All others are obtained by relabeling.

Each of the $45$ triangles
corresponds to a tritangent plane of $X$ in $\PP^3$.
We use the name of the triangle to label
the corresponding linear form $h$.
For instance, the triangle $S = F_{ij}F_{kl}F_{mn}$ gives the linear form
$h_{(ij)(kl)(mn)}$ in $y_0,y_1,y_2,y_3$. Its canonical form equals
\begin{equation}
\label{eq:triangle1}
 \omega(S) \,\, = \,\, {\rm Res}_X \frac{1}{f \, h_{(ij)(kl)(mn)}} \, \Omega_{\mathbb{P}^3} .
 \end{equation}
We next write $h_{ij} $ for the linear form given by 
 $S = E_iF_{ij}G_j$. Then the canonical form equals
 \begin{equation}
\label{eq:triangle2}
 \omega(S) \,\, = \,\, {\rm Res}_X \frac{1}{f \, h_{ij}} \, \Omega_{\mathbb{P}^3} .
 \end{equation}
One sees from this formula that $\omega(S)$ has simple poles along each boundary line of $S$.

The canonical form of a $4$-cycle is more interesting. We use the following observation.

\begin{lem} \label{lem:dist2vertex}
In the Schl\"afli graph $\mathcal{S}^{10}_{27}$,
every $4$-cycle $L_1 L_2 L_3 L_4$  has a unique vertex $L$ that has distance two from each $L_i$. 
We call this vertex $L$ the \emph{adjoint vertex} of $L_1L_2L_3L_4$. 
\end{lem}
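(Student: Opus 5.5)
By Proposition \ref{prop:Schlafli}, the $1080$ four-cycles of $\mathcal{S}^{10}_{27}$ form a single $W({\rm E}_6)$-orbit. Since $W({\rm E}_6)$ acts by graph automorphisms, it preserves graph distance. Hence it maps a vertex at distance two from all $L_i$ to a vertex with the same property for the image cycle. So existence and uniqueness of the adjoint vertex need only be checked for one representative four-cycle.

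Before computing, I note some structure. The Schl\"afli graph is strongly regular with parameters $(27,10,1,5)$: two intersecting lines span a tritangent plane and have exactly one common neighbour (the third line), while two skew lines have exactly five common transversals. In particular the graph has diameter two. So ``distance two from $L_i$'' simply means ``different from $L_i$ and not adjacent to $L_i$''.

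In a four-cycle $L_1L_2L_3L_4$ the diagonals are non-edges. If $L_1$ met $L_3$, the adjacent pair $L_1,L_3$ would have the two common neighbours $L_2,L_4$, contradicting $\lambda=1$. The same argument applies to $L_2,L_4$. Thus we are looking for the vertices outside $N[L_1]\cup\cdots\cup N[L_4]$, where $N[\cdot]$ denotes the closed neighbourhood.

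For the representative I take the quadrilateral $E_2F_{26}G_2F_{12}$ from (\ref{eq:quadrilaterals}). I will use the incidence rules: $E_i$ meets $F_{ij}$ and $G_j$ for $j\neq i$; $G_i$ meets $F_{ij}$ and $E_j$ for $j\ne i$; and $F_{ij}$ meets $F_{kl}$ exactly when $\{i,j\}\cap\{k,l\}=\emptyset$.
\begin{itemize}
\item The lines skew to both $E_2$ and $G_2$, other than $E_2,G_2$ themselves, are exactly the ten lines $F_{jk}$ with $2\notin\{j,k\}$. Indeed, $E_2$ meets all $G_j$ with $j\neq 2$, and $G_2$ meets all $E_j$ with $j\ne 2$.
\item Among these ten, the ones skew to $F_{26}$ must share an index with $\{2,6\}$. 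This leaves $F_{16},F_{36},F_{46},F_{56}$.
\item Among these four, only $F_{16}$ shares an index with $\{1,2\}$, so only $F_{16}$ is skew to $F_{12}$.
\end{itemize}
Hence $L=F_{16}$ is the unique adjoint vertex of this cycle, and transitivity finishes the proof.

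The only real obstacle is that this argument rests on the single-orbit statement of Proposition \ref{prop:Schlafli}, which was obtained computationally. To make the lemma self-contained, I would instead run an inclusion--exclusion count for an arbitrary four-cycle, using only the parameters $(27,10,1,5)$. This shows that exactly one vertex lies outside $\bigcup_i N[L_i]$. The delicate input there is controlling the triple intersections. For example, one must check that the third line $t_{12}$ of the tritangent plane through $L_1,L_2$ cannot also meet $L_3$ or $L_4$. I would settle this by a small case check in the Picard lattice, writing $t_{12}=-K-L_1-L_2$ and computing its intersection numbers with $L_3$ and $L_4$.
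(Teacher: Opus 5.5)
Your argument is correct. The paper gives no written proof of this lemma. It is stated as an observation, implicitly resting on the same kind of finite check as Proposition~\ref{prop:Schlafli}. Your primary route is in that same spirit: the single $W({\rm E}_6)$-orbit of $4$-cycles plus a hand verification on $E_2F_{26}G_2F_{12}$. Your answer $F_{16}$ is consistent with the paper's example $E_5F_{35}G_5F_{45}$, whose adjoint vertex is $F_{34}$.

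Your fallback is a genuinely different and better route. It proves the lemma for every $4$-cycle directly from the parameters $(27,10,1,5)$, without relying on the computed orbit statement.

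Moreover, the step you flag as delicate needs no Picard-lattice check. If $t_{12}$ met $L_3$, then the adjacent pair $L_2, t_{12}$ would have the two common neighbours $L_1$ and $L_3$, contradicting $\lambda=1$. Symmetrically, the pair $L_1, t_{12}$ rules out $t_{12}$ meeting $L_4$.

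The inclusion--exclusion then closes immediately:
\begin{itemize}
\item Each consecutive pair gives $N[L_i]\cap N[L_{i+1}]=\{L_i,L_{i+1},t_{i,i+1}\}$, of size $3$.
\item The two diagonal pairs have intersections of size $\mu=5$.
\item Each triple intersection is the singleton consisting of the middle vertex of the corresponding path.
\item The fourfold intersection is empty.
\end{itemize}
Hence $|\bigcup_i N[L_i]| = 44-22+4-0=26$, and exactly one of the $27$ vertices lies at distance two from all four $L_i$. This version buys independence from any computation and makes clear that the lemma is purely a consequence of the strongly regular parameters. The orbit-based version only certifies it for the specific symmetry group at hand.
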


The name \emph{adjoint vertex} hints at the fact that the distance-two line consists of the zeros of the canonical from. For instance, for the $4$-cycle $S = E_5F_{35}G_5F_{45}$ in Figure \ref{fig:adjointquadrilateral}, 
the adjoint vertex is $L = F_{34}$. The canonical form has simple poles along the cycle, and zeros along $L$: 
\begin{equation} \label{eq:canformquadrilateral} 
\omega(S) \,\,=\,\,  {\rm Res}_X \frac{h_{43}}{f\,h_{53}h_{45}}\,\Omega_{\mathbb{P}^3} \, \,= \,\, {\rm Res}_X \frac{h_{34}}{f\,h_{35}h_{54}}\,\Omega_{\mathbb{P}^3}.
\end{equation}
These equalities are up to scaling. The second equality is seen from the fact that the two forms have the same zeros and poles on $X$. For instance, for the form in the middle, the six candidate poles of $\omega(S)$ are $(E_5 F_{35} G_3) (E_4 F_{45} G_5)$. Two of those are cancelled by the numerator which vanishes on $E_4 F_{34} G_3$.
Here are two more examples of quadrilaterals:
$$ \begin{matrix}
\! F_{36}F_{15}F_{46}E_{6} \,\,{\rm has} \,\,
{\rm Res}_X \frac{h_{24}}{f\,h_{(15)(24)(36)}h_{64}}\,\Omega_{\mathbb{P}^3}
\,\,\,{\rm and}\,\,\, F_{56}F_{24}F_{15}F_{34} \,\,{\rm has}\,\,
{\rm Res}_X \frac{h_{(13)(26)(45)}}{f\,h_{(13)(24)(56)}h_{(15)(26)(34)}}\,\Omega_{\mathbb{P}^3}. 
\end{matrix}
$$

We now turn to pentagons. The Schl\"afli graph $\mathcal{S}^{10}_{27}$
has $4320$ five-cycles $S$ that are exposed in groups of $30$ on the various cubic surfaces $X$.
The pentagon in Figure \ref{fig:pentagon} is $S = F_{14} F_{35} F_{24} F_{36} F_{25}$.
In  (\ref{eq:pentagonform}) we wrote its canonical form $\omega(S)$ as a $2$-form on $\PP^2$.
We next describe the form $\omega(S)$ on the cubic surface $X \subset \PP^3$,
in a $W({\rm E}_6)$-invariant formulation.

We start from the following combinatorial fact about the Schl\"afli graph $\mathcal{S}^{10}_{27}$:
{\em each of the $4320$ potentially exposed $5$-cycles $S$ contains a unique $3$-cycle $H_1$.}
The $3$-cycle $H_1$ has a unique edge $E_1$ that is not an edge of $S$.
For instance, if $S = F_{14} F_{35} F_{24} F_{36} F_{25} $ then
$H_1 = F_{14} F_{36} F_{25}$ and $E_1 = F_{14} F_{36}$. Note that
$E_2 = S \backslash T_1$ is an edge of our $5$-cycle $S$.
That edge is contained in a unique $3$-cycle $H_2$
of the Schl\"afli graph. Let $L$ denote the added vertex.

 In our example, $E_2 =  F_{35} F_{24}$,
$L = F_{16}$ and $H_2 = F_{35} F_{24} F_{16}$.
Let $g = 0$ be the plane in $\PP^3$ that is spanned by the line
$L$ and the point corresponding to $E_1$.
The canonical form~is
\begin{equation}
\label{eq:canonicalpentagon}
 \omega(S) \,\, = \,\, {\rm Res}_X  \,\frac{g}{f \, h_{H_1} \, h_{H_2}}\,  \Omega_{\mathbb{P}^3}. 
 \end{equation}
In our example, $h_{H_1} = h_{(14)(36)(25)}$ and $h_{H_2} = h_{(35)(24)(16)}$. The linear form $g$ vanishes on the strict transform of $F_{16}$ and the image of $F_{14} \cap F_{36}$ under the blow-up map \eqref{eq:blowup}. 
By $W({\rm E}_6)$- symmetry, it suffices to check this formula for 
the special pentagon in Example \ref{ex:pentagon}.

We have constructed $45+1080+4320$ rational forms
on the cubic $X$. These forms have simple poles along the divisor $Y$ given by the $27$ lines.
They span the vector space $\Omega_{\rm log}^2(X \backslash Y)$ of all such forms.
In fact, a much smaller spanning set is given by the
$130 = 10+90+30$ forms that are exposed on our specific surface $X$.
The following lemma proves the number $109$ in Theorem~\ref{thm:130}.
It is gratifying to see how two different approaches 
 yield the same~result.

\begin{lem} \label{lem:109}
The dimension of $\,\Omega_{\rm log}^2(X \backslash Y)$ equals $109$.
\end{lem}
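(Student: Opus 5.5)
The plan is to identify $\Omega^2_{\rm log}(X\backslash Y)$ with a space of global sections of a line bundle on $X$ and compute that dimension by Riemann--Roch style bookkeeping on $\PP^3$. Since $X$ is general, no three of the $27$ lines meet, so $Y$ is a simple normal crossing divisor on the smooth surface $X$. For top-degree forms the logarithmic condition reduces to having at most simple poles along $Y$ and no poles elsewhere. Hence
\[
\Omega^2_{\rm log}(X\backslash Y) \,=\, H^0\bigl(X,\, K_X \otimes \mathcal{O}_X(Y)\bigr).
\]
It remains to determine the divisor class of $K_X + Y$.

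By adjunction, $K_X = (K_{\PP^3} + X)|_X = \mathcal{O}_X(-4+3) = \mathcal{O}_X(-1)$. For the class of $Y$, use the $45$ tritangent planes from Proposition \ref{prop:Schlafli}. Each tritangent plane $h$ cuts $X$ in exactly three lines, so its divisor is the sum of those lines, of class $H$. Each line lies in exactly $5$ tritangent planes, since its $10$ neighbours in $\mathcal{S}^{10}_{27}$ pair up into $5$ triangles. Summing over all $45$ planes therefore gives
\[
5Y \,\sim\, 45 H, \qquad\text{so}\qquad 5(Y - 9H)\sim 0.
\]
Because ${\rm Pic}(X)\cong\Z^7$ is torsion-free, this yields $Y \sim 9H$. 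Concretely, one can also partition the $27$ lines into $9$ disjoint tritangent triangles, for instance by taking $F_{12}F_{36}F_{45}$ and its images under a suitable subgroup, which exhibits $Y = X \cap V(h_1\cdots h_9)$ directly. Either way, $K_X + Y \sim 8H$ and $\mathcal{O}_X(K_X+Y) = \mathcal{O}_X(8)$.

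Next we compute $h^0(X,\mathcal{O}_X(8))$ from the sequence
\[
0 \to \mathcal{O}_{\PP^3}(5) \xrightarrow{\cdot f} \mathcal{O}_{\PP^3}(8) \to \mathcal{O}_X(8) \to 0.
\]
Since $H^1(\PP^3,\mathcal{O}(5))=0$, the restriction map on global sections is surjective. This gives
\[
h^0 \,=\, \binom{11}{3} - \binom{8}{3} \,=\, 165 - 56 \,=\, 109.
\]
This also matches the explicit residue description used above. Every element of the space is ${\rm Res}_X \frac{g}{f\, h_1\cdots h_9}\,\Omega_{\PP^3}$ with $\deg g = 8$, and $g$ is determined modulo $f$. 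The forms (\ref{eq:triangle1}), (\ref{eq:canformquadrilateral}) and (\ref{eq:canonicalpentagon}) are of this shape after multiplying numerator and denominator by the complementary tritangent planes.

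The main point requiring care is the first step, namely that logarithmic $2$-forms coincide with sections of $K_X(Y)$. This needs $Y$ to be normal crossing, which is exactly where genericity (no Eckardt points) enters. The divisor-class computation $Y\sim 9H$ is the other substantive input, and the tritangent-plane count handles it cleanly.
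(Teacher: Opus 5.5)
Your proposal is correct and is essentially the paper's second proof: identify $\Omega^2_{\rm log}(X\backslash Y)$ with $H^0(X,\mathcal{O}_X(K_X+Y))$ using that $Y$ is normal crossing, show $Y\sim 9H=-9K_X$, and count the $\binom{11}{3}-\binom{8}{3}=109$ sections of $\mathcal{O}_X(8)$. The only difference is how you obtain $Y\sim 9H$. You sum over the $45$ tritangent planes and use that ${\rm Pic}(X)$ is torsion-free, whereas the paper adds up the classes $H-E_i-E_j$, $2H-\sum_{i\neq j}E_i$ and $E_i$ in the blow-up basis. The paper also gives an independent first proof, computing the dimension as the genus $135-27+1$ of the Schl\"afli graph via Brown--Dupont.
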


\begin{proof}[Two proofs]
Our first proof follows from \cite[Proposition 4.4]{BD}. 
The divisor $Y$ is normal crossing in $X$, and its intersection complex
is the Schl\"afli graph $\mathcal{S}^{10}_{27}$. This graph is connected. It has $27$ vertices
and $135$ edges.
Hence the genus of $\mathcal{S}^{10}_{27}$  is $  135-27+1 = 109$, as desired.

For the second proof, we use the following isomorphism from  \cite[Proposition 1.13]{BD}:
\begin{equation}
\label{eq:thirdproof}
 \Omega^2_{\log}(X \backslash Y) \,\simeq \,H^0(X, {\cal O}_X(K_X + Y)).  \end{equation}
     We express the divisor $Y$ in terms of the generators $H, E_1, \ldots, E_6$ of ${\rm Pic}(X)$. The divisor classes of
     the $15$ lines $F_{ij}$
      are given by $H-E_i-E_j$. The sum of these  classes equals $15H-5(\sum_{i =1}^6E_i)$. 
      The six lines $G_j$ have divisor class $2H-\sum_{i \neq j} E_i$, which sums to $12H - 5 \sum_{i=1}^6 E_i$. The remaining six lines are $E_1, \ldots, E_6$. The divisor class of $Y$ is therefore given by 
   \begin{equation}
   \label{eq:degreenine} Y \,\, = \,\, 27 \, H \,-\, 9 \,\bigl(\,\sum_{i = 1}^6 E_i\,\bigr) \, = \, -9 \, K_X. 
   \end{equation}
   This equation reflects the easy-to-verify fact that
the radical ideal of  the divisor $Y$ is a complete intersection. Namely,
this ideal in $\C[y_0,y_1,y_2,y_3]$ is generated by the cubic $f$ and a product of $9$ linear forms,
corresponding to pairwise disjoint triangles in $\mathcal{S}^{10}_{27}$.

        The adjunction formula gives $K_X = -h_{|X}$, where $h$ is the hyperplane class in $\mathbb{P}^3$. Hence, ${\cal O}_X(K_X + Y) =  {\cal O}_X(-8 \, K_X) =  {\cal O}_X(8 \, h_{|X})$.
        Its sections are the elements of degree $8$ in
the coordinate ring $\C[X] = \C[y_0,y_1,y_2,y_3]/\langle f \rangle$.
        Their dimension is $\binom{8+3}{3}-\binom{5+3}{3} = 109$.
   \end{proof}

\begin{rmk} \label{rmk:realrank} 
We identify linear relations among the $130$ canonical forms
of polygons on $X$. Each line $L$ gives one linear relation.
To see this, we identify $L$ with a conic $G_i$ in the plane model.
The canonical form of the interior of $G_i$ is zero.  This specifies
a relation among the forms $\omega(S)$, where $S$ ranges
over all regions in the interior of $G_i$. These $27$ linear relations span all
linear relations among the $130$ forms $\omega(S)$.
They are not independent, as they obey six linear relations;
one for each exceptional divisor $E_i$.
This computation implies that our $130$ forms \eqref{eq:triangle1}-\eqref{eq:canonicalpentagon} span the space $\Omega_{\rm log}^2(X \backslash Y)$, which has dimension 
 $130-27+ 6 = 109$.
 \end{rmk}

\begin{rmk}
The canonical forms $\omega(S)$ define the
{\em log canonical embedding} of the pair $(X,Y)$. 
Our third proof says that this equals
the degree $8$ Veronese embedding of $X \subset \PP^3$. Its image is a surface in
$\PP^{108}$. That log canonical surface has
degree $192$, and its homogeneous prime ideal is generated by $12726$ quadrics
in $109$ variables.
See \cite{HP, HPP} for more information.
\end{rmk}

\section{What is a Positive Geometry?}
\label{sec5}

Positive Geometry is an emerging field of mathematics with origins in theoretical physics \cite{ABL,Lam,RST}. It studies the geometry and combinatorics underlying physical observables such as scattering amplitudes and cosmological correlators. Here ``geometry'' is meant in a broad sense, including complex, real and tropical algebraic geometry. The discovery of the amplituhedron by Arkani-Hamed and Trnka was an early milestone \cite{AT}. This was followed by numerous other 
geometries.
The physical principles underlying these objects prescribe several common features. This motivates the development of a unifying mathematical framework. The first step was taken in \cite{ABL}, which gives an axiomatic definition of a \emph{positive geometry}.

The Hodge-theoretic approach in \cite{BD} suggests an alternative definition, first formulated in \cite[Definition 1.13]{Harvard}. 
The two definitions are not quite equivalent, but many relevant examples align with both, as shown in the
lecture notes \cite{Harvard}. Our
 Definition \ref{def:positivearrangement} imposes
 reality constraints on the objects of \cite[Definition~1.13]{Harvard}.  
While the debate about the question in the section title is still ongoing, we
here propose the following workflow for practitioners.

We use the term {\em domain}
for an open semialgebraic subset of some $\R^d$ that is orientable.

\begin{wfl} \label{workflow}
When a domain $S$ is suspected to be a positive geometry, proceed as follows: 

\vspace{-0.13in}

\begin{enumerate}
\item Find an irreducible projective variety $X$ and a divisor $Y \subset X$ such that $S$ is embedded in $X_\R$
as (the closure of) a union of \emph{regions} in $X_\R \backslash Y_\R$. 
We view $S$ as a \emph{relative cycle}.

\vspace{-0.1in}
\item Show that $(X,Y)$ is a \emph{positive arrangement}. \vspace{-0.1in} 
\item Count the regions of $(X,Y)$ and determine the \emph{combinatorial rank}. \vspace{-0.1in}
\item Examine whether the combinatorial rank equals the \emph{real combinatorial rank}. \vspace{-0.1in}
\item Identify the \emph{canonical form} $\omega(S)$ by computing residues and proceeding recursively. \vspace{-0.1in}
\item  Your \emph{positive geometry} is the pair $(S,\omega(S))$. Repeat step 5 for other regions of interest. \vspace{-0.1in}
\end{enumerate}
\end{wfl}
We now turn to the formal set-up and define the italicized objects in Workflow \ref{workflow}.
Consider a $d$-dimensional irreducible complex projective variety $X$ and a divisor $Y \subsetneq X$ such that $X \backslash Y$ is smooth. A \emph{relative $d$-cycle} of the pair $(X,Y)$ is a $d$-dimensional chain in $X$ whose boundary is contained in $Y$. The formal $\mathbb{Q}$-linear combinations of the relative $d$-cycles constitute the $\mathbb{Q}$-vector space $Z_d(X,Y)$. Taking the quotient of $Z_d(X,Y)$
 by the group of boundaries gives the relative homology group $H_d(X,Y)$. The pair $(X,Y)$ is said to have \emph{genus zero} if the mixed Hodge numbers $h^{-p,0}$ of $H_d(X,Y)$ for $p >0$ are all equal to zero \cite[Section 3]{BD}. This technical condition can be checked using the tools  in \cite[Section~3.2]{BD}. 

\begin{defn} \label{def:positivearrangement}
A \emph{positive arrangement} is a pair $(X,Y)$ of an irreducible complex projective variety $X$ and a divisor $Y$ on $X$, which satisfies the following four conditions:
\begin{enumerate}
\item The variety $X$ is defined over $\mathbb{R}$, and $X$ has a smooth real point. \vspace{-0.1in}
\item Each irreducible component of $Y$ is defined over $\mathbb{R}$ and has a smooth real point. \vspace{-0.1in}
\item The complement $X \backslash Y$ of $Y$ in $X$ is contained in $X_{\rm reg}$. Equivalently, ${\rm Sing}(X) \subseteq Y$. \vspace{-0.1in}
\item The pair $(X,Y)$ has genus zero. 
\end{enumerate}
\end{defn}

Condition 1 ensures that the real points of $X$ are Zariski dense in $X$. Condition 2 does the same for each irreducible component of $Y$. Our running example is a positive~arrangement. 

\begin{example} \label{ex:posarr_cubicsurface}
The cubic surface $X$ in Section~\ref{sec2}, together with the divisor $Y$
given by its $27$ lines, is a positive arrangement.
 The pair $(X,Y)$ has genus zero by \cite[Corollary 3.14]{BD}.
 \end{example}

\emph{Positive geometries} are obtained from (oriented) regions in a positive arrangement. 

\begin{defn} \label{def:positivegeometry}
Let $(X,Y)$ be a positive arrangement.
A \emph{region} of $(X,Y)$ is the closure of an orientable connected component of the complement $X_\mathbb{R} \backslash Y_\mathbb{R}$. A \emph{positive geometry} of $(X,Y)$ is any relative cycle of dimension $d = \dim X$ that 
can be written as a formal $\mathbb{Q}$-linear combination of regions.  In particular, every region
is a positive geometry of $(X,Y)$.
\end{defn}

The positive geometries of $(X,Y)$ form a $\mathbb{Q}$-vector subspace of $Z_d(X,Y)$, where $d = \dim X$. 
That subspace is spanned by all  regions.
The regions are positive geometries.
  So are their unions; the union of two regions corresponds to the sum of their cycles in $Z_d(X,Y)$.

\begin{example} \label{ex:regionscubicsurface}
Each of the $130$ polygons $S$ from Theorem \ref{thm:130} is a region of the positive arrangement $(X,Y)$ from Example \ref{ex:posarr_cubicsurface}. Unions of these polygons are also positive geometries, see for instance the yellow shape in Figure \ref{fig:pentagon}. We now equip these with a differential~form. 
\end{example}

For each genus zero pair $(X,Y)$, Brown and Dupont  \cite[Definition 2.6]{BD} introduce a map
\begin{equation} \label{eq:browndupont}
    \omega: H_d(X,Y) \, \longrightarrow \, \Omega_{\rm log}^d(X\backslash Y), \quad \text{where} \quad d  = \dim X.
\end{equation}
This is a linear map of $\Q$-vector spaces. The map $\omega$
 takes a relative homology class $[\sigma]$ to  a logarithmic differential form $\omega([\sigma])$.
 The $\Q$-vector space $\Omega_{\rm log}^d(X\backslash Y)$ is contained
  in the space of rational forms on $X$ with at most simple poles along the irreducible components of $Y$.

\begin{defn} \label{def:canonicalform}
Let $(X,Y)$ be a positive arrangement and let $\sigma \in Z_d(X,Y)$ be a positive geometry of $(X,Y)$. The \emph{canonical form} $\omega(\sigma)$ is its image under the Brown--Dupont map \eqref{eq:browndupont}. That is, $\omega(\sigma) = \omega([\sigma])$, where $[\sigma]$ is the class of $\sigma$ in
the relative homology $H_d(X,Y)$.  
\end{defn}

\begin{example} \label{ex:canformlinesegment}
Consider the positive arrangement $(X,Y)$ where $X = \mathbb{P}^1$ and the divisor is $Y = \{a_1,a_2, \ldots, a_r\} \subset \PP_\R^1$. We assume 
$a_1 < a_2 < \cdots < a_r$ in $\R \subset \PP_\R^1$,
 and the line segment $ \sigma_{i,j} = [a_i,a_{j}]$ with $i < j$ is oriented from right to left.
 The canonical form of this region is
 \[ \omega(\sigma_{i,j}) \, = \, \frac{a_{j}-a_i}{(x-a_i)(a_{j}-x)} \, {\rm d}x. \]
Among these $\binom{r}{2}$ forms, only $r-1$ are linearly independent.
To get a basis, set $j= i+1$.
 This example is essential for the recursive construction of
     canonical forms in higher dimensions.
\end{example}

Our definition of positive geometry is easy to check in
many cases. 
The existence and uniqueness of the canonical form $\omega(\sigma)$ for
every region $\sigma$ is ensured by Brown and Dupont~\cite{BD}.
However, there is no free lunch. We are left with finding
the form, and with proving
 that it actually is the image of $[\sigma]$ under the map \eqref{eq:browndupont}. 
To do this, we need \cite[Proposition~2.15]{BD}.

\begin{example} \label{ex:canformpentagon}
Let $X = \mathbb{P}^2$ and let $Y = Y_1 \cup \cdots \cup Y_5$ be the union of the five edge lines of the pentagon $\sigma$ in Figure \ref{fig:pentagon}. We claim that its canonical form $\omega(\sigma)$
is given up to scaling by
\begin{equation} \label{eq:canformpentagon}
\tilde\omega \, := \, \frac{C}{Y_1 Y_2 Y_3 Y_4 Y_5} \, \, \Omega_{\mathbb{P}^2}, \end{equation}
where $C$ is the pink conic in Figure \ref{fig:pentagon}. We will now prove this. 
For each $i = 1, \ldots, 5$, let $Z_i = Y_i \cap (\cup_{j \neq i} Y_j)$ be the set of four intersection points on $Y_i$ obtained from the other lines. 
The residue along $Y_i$ of a logarithmic $2$-form on $X \backslash Y$ gives a logarithmic $1$-form on $Y_i \backslash Z_i$:
\[ {\rm Res}_{Y_i} \, : \, \Omega^2_{\rm log}(X\backslash Y) \, \longrightarrow \, \Omega^1_{\rm log}(Y_i \backslash Z_i). \]
The edge $e_i = \sigma \cap Y_i$ is a positive geometry for
the positive arrangement  $(Y_i,Z_i)$. By Example \ref{ex:canformlinesegment},
 its canonical form is the unique form on $Y_i \simeq \mathbb{P}^1$ with only simple poles at the vertices of $e_i$.  
We claim that the image of $\tilde \omega$ under the residue map ${\rm Res}_{Y_i}$ has this property. 
The only possible poles of ${\rm Res}_{Y_i} \tilde \omega$ are the points $Z_i$. Since the conic $C$ passes through the points in $Z_i$ which are not vertices of $e_i$, $\tilde \omega$ indeed only has poles at these vertices. Hence, there exists a nonzero scalar $\gamma$ such that ${\rm Res}_{Y_i} \gamma \, \tilde \omega = \omega(e_i)$. Moreover,  up to scalar multiple, $\tilde \omega$ is the only form in $\Omega^2_{\rm log}(X\backslash Y)$ with this property. 
By \cite[Proposition 2.15]{BD}, this implies $\omega(\sigma) = \gamma \cdot \tilde \omega$. 
\end{example}

The reader who is familiar with the original definition of positive geometries in \cite{ABL} recognizes the recursive nature of that definition in Example \ref{ex:canformpentagon}. In our experience, the recursive properties of canonical forms, originally postulated in \cite{ABL}, still yield the most effective tools for evaluating the Brown--Dupont map \eqref{eq:browndupont}, and for testing or proving the correctness of a candidate canonical form. The mixed Hodge theory framework from \cite{BD} 
led us to a more concise definition, and to an elegant way 
of showing existence and uniqueness of such a~form. 

\begin{prop} \label{prop:130canforms}
The $10$ triangles, $90$ quadrilaterals and $30$ pentagons on the cubic surface $X$ from Theorem \ref{thm:130} are positive geometries of $(X,Y)$. Up to scaling, their canonical forms are given by the expressions in \eqref{eq:triangle1}, \eqref{eq:canformquadrilateral} and \eqref{eq:canformpentagon}, by acting with the Weyl group $W({\rm E}_6)$.
\end{prop}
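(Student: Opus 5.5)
The plan has two parts. First, each of the $130$ polygons is a positive geometry of $(X,Y)$. Then, the displayed forms are their canonical forms.

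The first part is immediate from the definitions. By Example \ref{ex:posarr_cubicsurface}, $(X,Y)$ is a positive arrangement. Each polygon is the closure of a connected component of $(X\backslash Y)_\R$, and it is an open disc, hence orientable. So it is a region in the sense of Definition \ref{def:positivegeometry}, and therefore a positive geometry.

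For the canonical forms, I would reduce by symmetry. By Proposition \ref{prop:Schlafli}, the exposed triangles, quadrilaterals and pentagons lie in the $W({\rm E}_6)$-orbits of (\ref{eq:10triangles}), (\ref{eq:quadrilaterals}) and (\ref{eq:pentagons}). The candidate formulas \eqref{eq:triangle1}, \eqref{eq:triangle2}, \eqref{eq:canformquadrilateral} and \eqref{eq:canonicalpentagon} are phrased purely in terms of the Schl\"afli graph: tritangent planes, adjoint vertices, and the planes $g$ spanned by a line and a vertex. It therefore suffices to verify one representative per combinatorial type. To handle the fact that $W({\rm E}_6)$ is not an automorphism of a fixed $X$, I would argue the verification uniformly. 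The poles and zeros of each candidate on $X$ are determined by intersection data of lines, which depends only on the graph. Hence the residue argument below is independent of the labeling.

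The core step mirrors Example \ref{ex:canformpentagon} and uses \cite[Proposition 2.15]{BD}. Fix a polygon $\sigma$ with boundary lines $L_1,\dots,L_k$ and candidate $\tilde\omega$. Three things need checking.
\begin{enumerate}
\item $\tilde\omega\in\Omega^2_{\rm log}(X\backslash Y)$. This means it has at most simple poles along $Y$ and no other poles. The divisor of $\tilde\omega$ is computed in ${\rm Pic}(X)$. For instance, the form ${\rm Res}_X \frac{h}{f h'h''}\Omega_{\PP^3}$ has divisor equal to the three lines of $h$ minus the six lines of $h'$ and $h''$. Cancellations occur precisely for the shared or adjoint lines, as illustrated after \eqref{eq:canformquadrilateral}.
\item The only surviving poles are exactly the edges $L_i$ of $\sigma$. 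For the triangle this is clear. For the quadrilateral, the numerator $h_{43}$ kills the two non-edge poles. For the pentagon, $g$ vanishes on $L$ and, in the two tritangent products, the residual non-edge line is killed.
\item For each $i$, ${\rm Res}_{L_i}\tilde\omega$ has poles only at the two vertices of the edge $\sigma\cap L_i$. By Example \ref{ex:canformlinesegment}, it is then a multiple of the canonical form of that segment. This means the numerator, restricted to $L_i\cong\PP^1$, vanishes at the other $8$ intersection points listed in Table \ref{tab:circular}. Some of these are cancelled by zeros of the numerator. The rest do not arise as poles at all, because the corresponding lines are not poles of $\tilde\omega$.
\end{enumerate}

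I would then show that these residues can be rescaled consistently by a single $\gamma$. Apply \cite[Proposition 2.15]{BD}: a form in $\Omega^2_{\rm log}$ whose residues along every component of $Y$ are the canonical forms of the corresponding boundary pieces equals $\omega(\sigma)$. The consistency of scalars comes from compatibility at the vertices. The residue at a vertex must be $\pm1$ from both adjacent edges, which holds automatically for a logarithmic $2$-form by the antisymmetry of iterated residues.

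The main obstacle is the pentagon case, which is the vanishing check in step 3 for the pentagon. One must see that the plane $g$ through $L$ and the vertex $E_1$ cuts the edge lines exactly at the spurious points. I would first verify this explicitly on the planar model. There $\tilde\omega$ pulls back via \eqref{eq:octamodel} to the form \eqref{eq:pentagonform}, whose numerator is the conic through $E_2,E_3,E_4,E_5$ and $F_{14}\cap F_{36}$. Example \ref{ex:canformpentagon} already proves that this is the canonical form. Matching the pullback of \eqref{eq:canonicalpentagon} with \eqref{eq:pentagonform} by a symbolic computation, up to a scalar, settles the pentagon. The quadrilateral representative $E_5F_{35}G_5F_{45}$ is treated the same way.
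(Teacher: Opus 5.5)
Your proposal is correct and follows the paper's proof. In both, the residue of each candidate form along every edge line has simple poles only at the two vertices of that edge, so it is the segment form of Example \ref{ex:canformlinesegment}, and \cite[Proposition 2.15]{BD} is applied as in Example \ref{ex:canformpentagon}; your divisor-class bookkeeping and the vertex-residue argument for a common scalar $\gamma$ spell out what the paper compresses into ``by construction''.
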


\begin{proof}
Let $S \subset X$ be a curvy $k$-gon where $k = 3, 4, 5$, and let $\omega$ be the corresponding form postulated in \eqref{eq:triangle1}, \eqref{eq:canformquadrilateral} and \eqref{eq:canformpentagon}. Let $L_i$ be any of the lines constituting the boundary $L_1\cup \cdots \cup L_k$ of $S$. By construction, the residue ${\rm Res}_{L_i} \, \omega$ has only simple poles at the two vertices of $S$ lying in $L_i$. Therefore, by Example \ref{ex:canformlinesegment}, the residue is the canonical form of the line segment $\sigma = S \cap L_i$ as a positive geometry of the positive arrangement $(L_i, L_i \cap (\cup_{j \neq i} L_j))$. We now apply \cite[Proposition 2.15]{BD} as in Example \ref{ex:canformpentagon} to conclude the proof. 
\end{proof}

Brown and Dupont \cite[Section 4]{BD} define the \emph{combinatorial rank} ${\rm cr}(X,Y)$ 
of a genus zero pair as the dimension of  the vector space $\,\Omega_{\rm log}^d(X\backslash Y)$.
We propose the following real analog.

\begin{defn} \label{def:realcombinatorialrank}
Fix a positive arrangement $(X,Y)$.
Its \emph{real combinatorial rank} ${\rm cr}_{\mathbb{R}}(X,Y)$ is the dimension of the $\mathbb{Q}$-vector space spanned by the canonical forms $\omega(\sigma)$, where $\sigma$ ranges over all regions $\sigma$. 
We have ${\rm cr}_{\mathbb{R}}(X,Y) \leq {\rm cr}(X,Y)$. Equality holds
whenever the regions generate the relative homology, i.e.~the natural map $H_d(X_{\mathbb{R}}, Y_{\mathbb{R}}) \rightarrow H_d(X,Y)$ is surjective. 
\end{defn}

Definition \ref{def:realcombinatorialrank} was first suggested to us by Mario Kummer, as was the next example.

\begin{example}
Let $X = \mathbb{P}^2$ and $Y = C_1 \cup C_2$ the union of two general conics.
 The combinatorial rank is ${\rm cr}(X,Y) = 3$,
  by \cite[Proposition 4.4]{BD}. However, the real combinatorial rank depends on
  the reality of the four points in $C_1 \cap C_2$. 
  If none of them is real then   ${\rm cr}_{\mathbb{R}}(X,Y) = 0$,
  if two are real then  ${\rm cr}_{\mathbb{R}}(X,Y) = 2$,
  and if all  four are real then  ${\rm cr}_{\mathbb{R}}(X,Y) = 3$.
  \end{example}

This paper revolves around geometries whose
combinatorial rank equals the real rank. Notably, if $(X,Y)$ is a cubic surface with
 $27$ real lines then we learn from Remark \ref{rmk:realrank} that
 \begin{equation}
 \label{eq:cr109}
 {\rm cr}_{\mathbb{R}}(X,Y) \,=\, {\rm cr}(X,Y) \,=\, 109. 
 \end{equation}
 Equality also holds for the pairs in the next section,
 and for the moduli space  in Theorem~\ref{thm:432}.
 
 Before ending with two more examples, we note that we have now completed all steps in Workflow \ref{workflow} for any of the curvy polygons contained in the cubic surface $X$ from~Section \ref{sec2}.
 
\begin{example} \label{ex:cr17}
The right image of Figure \ref{fig:endler1} represents the pair $(X,Y')$, where $X$ is the cubic surface and $Y'$ is the union
of $12$ lines on~$X$, namely the $12$ lines in
 the double-six  (\ref{eq:doublesix2}).
 The pair $(X,Y')$ is a positive arrangement.
Each of its $25$ regions is a positive geometry.
The tropicalization of $X \backslash Y'$ is the complete bipartite graph $K_{6,6}$ with 
six edges removed. Hence, by \cite[Proposition 4.4]{BD},
the (real) combinatorial rank is ${\rm cr}_\R(X,Y') = {\rm cr}(X,Y') =  17$.
\end{example}

\begin{example}
Let $X = \mathbb{P}^2$ and let $Y = C_1 \cup \cdots \cup C_r$ be a union of real rational curves $C_i \subset \mathbb{P}^2$. Then $(X,Y)$ is a positive arrangement. It has genus zero by \cite[Proposition 3.22]{BD}. An example with 
$r = 21 = 15+6$
is shown in Figure \ref{fig:firstpentagon} (left).  
Positive geometries in $(\PP^2,Y)$ have been studied in the literature under the names
{\em polypols} and {\em polycons} \cite{bruser}.
A remarkable positive geometry with $r=3$ conics was recently constructed
by Br\"user; see \cite[Figure~3]{bruser}.
\end{example}

\section{Three-Dimensional Geometries }
\label{sec6}

In this section we study positive arrangements of dimension three. We start from $(\PP^3,Y)$ where $Y$ is a smooth or singular cubic surface $V(f)$ together 
with one or more planes in $\PP^3$. 
{\em Cayley's cubic surface} $V(f)$ in Figure \ref{fig:cayley} has four isolated singular points. It is given by
\begin{equation}
\label{eq:cayley}
 f (y) \,\, = \,\,\, y_0 y_1 y_2 + y_0 y_1 y_3 + y_0 y_2 y_3 + y_1 y_2 y_3 . 
\end{equation} 
This example was discussed in \cite[Section 5.4]{ABL}, where
$Y = V(y_0 f)$. So, the divisor consists of the cubic surface
together with the plane spanned by three of its four singular points.
The pair $(\PP^3,Y)$ is a positive arrangement. A canonical form was given in 
\cite[equation (5.34)]{ABL}.

Thereafter, we present an example recently discovered by
Koefler, Pavlov and Sinn \cite{KPS}.  The context of their
article is the question whether the amplituhedron is a positive geometry. This is of considerable interest to physicists.
For the pair $(\PP^3,Y)$ studied in \cite[Section 6]{KPS}, the divisor $Y$ is a smooth cubic surface
together with five distinguished planes (Figure \ref{fig:KPS}).

\begin{figure}[h]
$$ 
\includegraphics[width =0.45\linewidth]{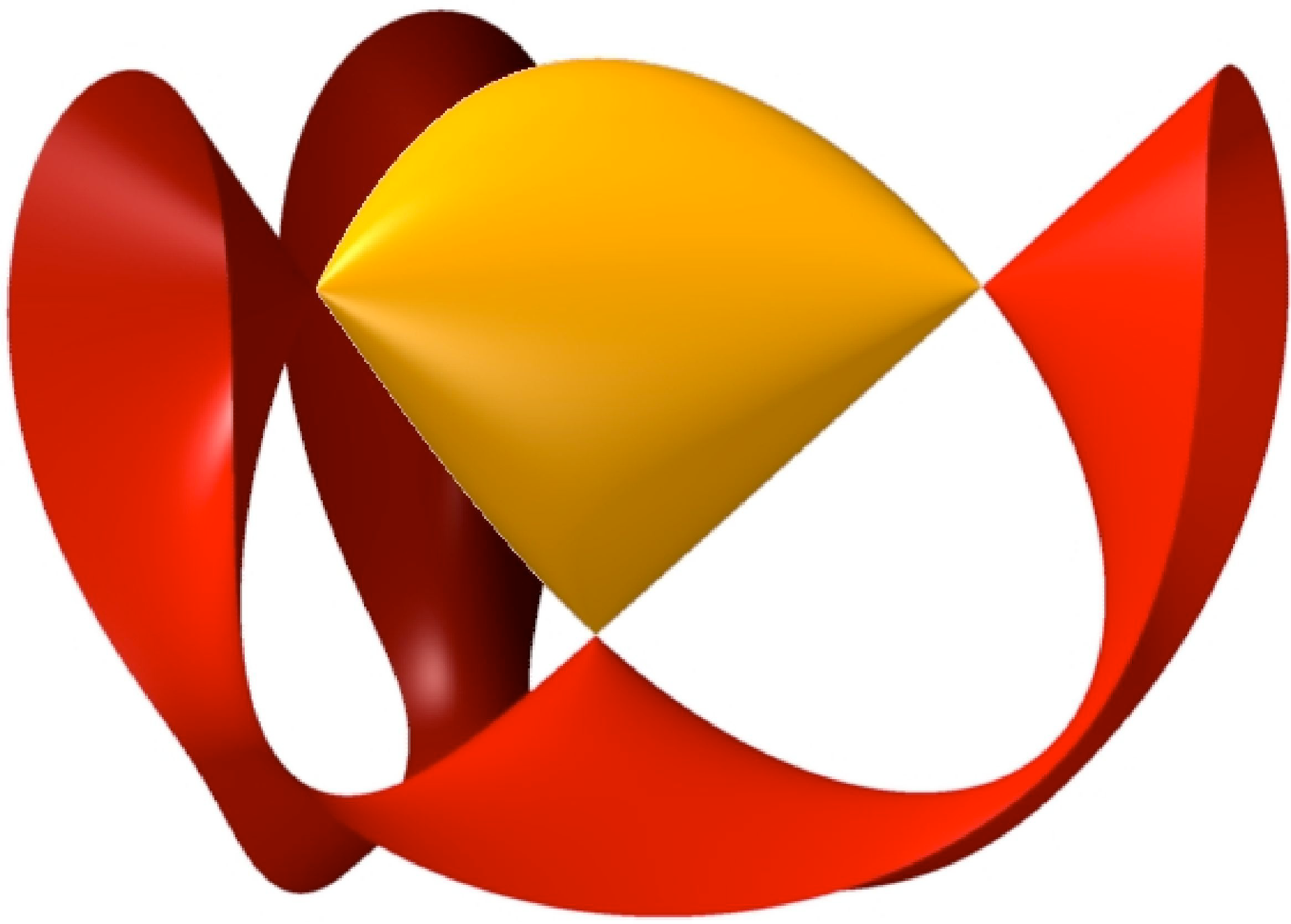} 
\vspace{-0.07in}
$$
\caption{Cayley's cubic surface is the boundary of a positive geometry in $\PP^3$.}
\label{fig:cayley}
\end{figure}

We begin with  the Cayley cubic (\ref{eq:cayley}), taken with
the boundary of its inscribed tetrahedron.
The picture in Figure \ref{fig:cayley} is
the logo of the Nonlinear Algebra group at MPI-MiS~Leipzig.

\begin{prop} \label{prop:41}
Let $X = \PP^3$ and $\,Y = V(y_0y_1y_2y_3 f)$. Then
$(X,Y)$ is a positive arrangement with $15$ regions.
The canonical forms of these regions  span the $4$-dimensional space
\begin{equation}
\label{eq:4dOmega}
 \Omega_{\rm log}^3(X\backslash Y) \,\,=\,\,
\biggl\{ \frac{\alpha y_0 y_1 y_2 + \beta y_0 y_1 y_3 + \gamma y_0 y_2 y_3 + \delta y_1 y_2 y_3}{y_0y_1y_2y_3\, f(y)}
 \, \Omega_{\mathbb{P}^3} \,:\,\alpha,\beta,\gamma,\delta \in \Q \,\biggr\}.
\end{equation}
Moreover, the real combinatorial rank of $(X,Y)$ equals the combinatorial rank, which is four. 
\end{prop}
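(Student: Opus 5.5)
Our plan is to check the four conditions of Definition \ref{def:positivearrangement}, count the regions directly, identify $\Omega^3_{\rm log}(X\backslash Y)$ explicitly, and then show that the canonical forms of the regions span it. Conditions 1--3 are immediate: $X=\PP^3$ is smooth and defined over $\Q$, and the five components of $Y$ (four coordinate planes and the Cayley cubic) are real with smooth real points. For genus zero (condition 4) we use a birational change of coordinates. The Cremona involution $y_i \mapsto 1/y_i$ sends $f/(y_0y_1y_2y_3)=\sum_i 1/y_i$ to the linear form $y_0+y_1+y_2+y_3$. Hence $X\backslash Y$ is isomorphic to the complement of the five planes $y_0,\dots,y_3,\, y_0+y_1+y_2+y_3$ in $\PP^3$, that is, to $\mathcal{M}_{0,6}$ up to the torus structure. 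Since the Cremona map is an isomorphism on the torus $\{y_0y_1y_2y_3\neq 0\}$, this identification is valid on $X\backslash Y$. Genus zero is a property of the complement together with a good compactification, so it transfers from the hyperplane arrangement, which has genus zero by \cite[Proposition 3.22]{BD}. The mixed Hodge structure on $H^3(X\backslash Y)$ is of Tate type, which is the input needed.

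For the regions, we count sign patterns. On the torus of $\PP^3_\R$ there are $2^3=8$ orthants, up to global sign, given by the signs of $y_i/y_0$. In the Cremona coordinates $z_i=1/y_i$ the cubic becomes the plane $\sum z_i=0$, so an orthant is cut exactly when both signs occur. The all-positive orthant (up to sign) is not cut: it gives $1$ region. Each of the $7$ mixed orthants is cut into $2$ pieces, giving $14$. The total is $15$. This agrees with the count of regions of five generic planes in $\PP^3_\R$, namely $\binom{4}{0}+\binom{4}{1}+\binom{4}{2}+\binom{4}{3}=15$. Each region is orientable, since $\PP^3_\R$ is orientable.

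For the form space, a log $3$-form is $g\,\Omega_{\PP^3}/(y_0y_1y_2y_3 f)$ with $\deg g=3$. We identify $\Omega^3_{\rm log}$ by transporting the known answer for a generic arrangement of $5$ planes in $\PP^3$. There the Orlik--Solomon top degree has dimension $\binom{4}{3}=4$ (equivalently, the combinatorial rank of that arrangement). Pulling the four forms $d\log$ of ratios back under the Cremona map, using $\Omega_{\PP^3}\mapsto \pm\Omega_{\PP^3}/(y_0y_1y_2y_3)^{\ldots}$ suitably homogenized, gives numerators that are the four monomials $y_0y_1y_2,\dots,y_1y_2y_3$. The main obstacle is this step: checking that the Brown--Dupont space $\Omega^3_{\rm log}$, which is defined through a normal crossing compactification, is exactly the stated space. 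For the inclusion we verify directly that each displayed form has only logarithmic poles, including near the four nodes of the Cayley cubic, where $Y$ is not normal crossing; the Cremona picture shows this, since the nodes correspond to blown-down coordinate lines. For equality we use that the dimension equals the combinatorial rank $4$ of the isomorphic arrangement complement.

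Finally, we compute canonical forms as in Example \ref{ex:canformpentagon}, recursively via residues and \cite[Proposition 2.15]{BD}. Alternatively, we pull back the canonical forms of the $15$ simplex and prism regions of the five-plane arrangement, whose span is known to be the full $4$-dimensional space because real hyperplane arrangements have regions generating relative homology. This gives ${\rm cr}_\R={\rm cr}=4$.
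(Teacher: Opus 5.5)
Your proposal is correct, but it routes much more through the Cremona map than the paper does.

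The paper uses $y_i\mapsto 1/y_i$ only to get ${\rm cr}(X,Y)=4$. The rest is done on $(\PP^3,Y)$ itself:
\begin{itemize}
\item Genus zero comes from the inequality of \cite[Corollary 3.13]{BD}. The restrictions to the four coordinate planes are three lines in $\PP^2$, and the restriction to the Cayley cubic is handled by \cite[Theorem 3.23]{BD}.
\item The explicit space comes from taking residues along the coordinate planes. This forces the cubic numerator to vanish on the six coordinate lines, and then ${\rm cr}=4$ gives equality.
\item ${\rm cr}_\R=4$ comes from the five regions filling the elliptope.
\end{itemize}

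You instead use that $X\backslash Y$ is smooth, so $H_3(X,Y)\simeq H_3^{\rm lf}(X\backslash Y)$. Hence both the genus and $\Omega^3_{\rm log}$ depend only on the open variety. This, not ``a good compactification'', is the right justification, and the paper uses the same principle in Sections \ref{sec6} and \ref{sec11}.

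One isomorphism then gives genus zero, the $15$ regions, and the exact form space. Set $\ell=z_0+z_1+z_2+z_3$. Under $z_i=\prod_{j\neq i}y_j$ the Orlik--Solomon forms $L(z)\,\Omega_{\PP^3}/(z_0z_1z_2z_3\,\ell)$ become the displayed forms, since $\sum_i c_iz_i\mapsto\sum_i c_i\prod_{j\neq i}y_j$ and $\ell\mapsto f$. This yields inclusion and equality at once, with no local analysis at the nodes. The paper's route, by contrast, stays in $(\PP^3,Y)$ and showcases the residue recursion of Workflow \ref{workflow}.

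For the real rank you need not cite the general fact about real arrangements, although it is true (any simplex cut out by $n+1$ of the hyperplanes is a sum of chambers). It is more direct to note that the five simplex regions of the plane arrangement have numerators $\ell,z_0,\dots,z_3$. These pull back to $f$ and the four squarefree monomials. The five regions are exactly the paper's tetrahedron and the four lens-shaped pieces of the elliptope.

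Two side remarks in your write-up are false, though nothing depends on them:
\begin{itemize}
\item The complement of five general planes in $\PP^3$ is not $\mathcal{M}_{0,6}$. It has $b_3=4$ and Euler characteristic $-1$, while $\mathcal{M}_{0,6}$ has combinatorial rank $24$ by Proposition \ref{prop:segrecubic}.
\item The nodes of the Cayley cubic are the four coordinate points. Under the Cremona map they correspond to the coordinate planes $z_i=0$ (the inverse map contracts each such plane to a node), not to coordinate lines.
\end{itemize}
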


\begin{proof}
We first show that $(X,Y)$ has genus zero. Projective space itself has $g(X) = g(\mathbb{P}^3) = 0$.
Setting $Y_i = V(y_i)$ and $Y_f = V(f)$,
the inequality from \cite[Corollary 3.13]{BD} reads 
\[ g(X,Y) \,\, \leq \, \, \sum_{i = 0}^3 g  (Y_i, Y_i \cap \overline{Y \backslash Y_i} ) \,  +  \, g(Y_f, Y_f \cap \overline{Y \backslash Y_f} ),  \]
The first sum is zero by \cite[Proposition 3.26(1)]{BD}, since $Y_i \cap \overline{Y \backslash Y_i}$ is a union of three lines in $Y_i \simeq \mathbb{P}^2$. But, we also have
 $g(Y_f, Y_f \cap \overline{Y \backslash Y_f} ) = 0$, by \cite[Corollary 3.13 and Theorem 3.23]{BD}. 

The four coordinate planes $y_i$ divide $\PP^3_\R$ into eight orthants.
The cubic surface $V(f)$ intersects seven of the eight orthants, and it divides them
into two regions each. The positive orthant $\PP^3_{>0}$ is disjoint from the
surface $V(f)$. Hence it is also a region of $(X,Y)$. Namely, it is
 the interior of the inscribed tetrahedron, which is the convex 
hull of the four singular points, seen in Figure \ref{fig:cayley}.
In total, we conclude that $(X,Y)$ has $15$ regions $S$ in $\PP^3_\R$.

Each canonical form $\omega(S)$ is an expression
$\frac{c(y)}{y_0y_1y_2y_3\, f(y)} \Omega_{\mathbb{P}^3}$,
where $c(y)$ is a cubic polynomial. The logarithmic forms generate a subspace of this $20$-dimensional vector space. We claim that ${\rm cr}(\mathbb{P}^3,Y) = \dim \Omega^3_{\log}(\mathbb{P}^3 \backslash Y) = 4$. 
To show this, consider the Cremona transformation $(y_0:y_1:y_2:y_3) \mapsto (\frac{1}{y_0}: \frac{1}{y_1}:\frac{1}{y_2}:\frac{1}{y_3})$.
This identifies $\mathbb{P}^3 \backslash Y$ with the complement of five general planes in $\PP^3$. Such an arrangement has combinatorial rank $4$, see \cite[Section~6.1]{BD}. 
 
We now deduce the explicit description of $\Omega^3_{\log}(\mathbb{P}^3 \backslash Y)$ in the proposition. The residue of a logarithmic form on $X \backslash Y$ along $y_0 = 0$ is a logarithmic form on $V(y_0) \simeq \mathbb{P}^2$. In particular, it has simple poles along $V(y_0) \cap V(y_1y_2y_3 \, f)$. One checks that, for this to hold, the numerator $c(y)$ must vanish along the lines $y_0 = y_i = 0$ for $ i = 1, 2, 3$. Taking residues along the other coordinate planes, we find that $c(y)$ vanishes on the six coordinate lines. This implies that
$c(y)$ is a linear combination of the four squarefree monomials $y_i y_j y_k$. Since ${\rm cr}(X,Y) = 4$, each choice of $c(y) = \alpha y_0 y_1 y_2 + \beta y_0 y_1 y_3 + \gamma y_0 y_2 y_3 + \delta y_1 y_2 y_3$ gives a logarithmic $3$-form. 

Five of the $15$ regions form the interior of the yellow {\em elliptope} in Figure~\ref{fig:cayley}.
Their canonical forms span a $4$-dimensional space. Indeed, they are given by the following coefficients in (\ref{eq:4dOmega}): 
$$ (\alpha,\beta,\gamma,\delta)\, \in \, \bigl\{
 (1,0,0,0), (0,1,0,0), (0,0,1,0),  (0,0,0,1), (-1,-1,-1,-1) \bigr\}. $$
This can be verified using \cite[Proposition 2.15]{BD}, like we did in Example \ref{ex:canformlinesegment}. 
We have now shown that ${\rm cr}_{\mathbb{R}}(X,Y) \,=\, {\rm cr}(X,Y) \,=\, 4$, which
 completes the proof of Proposition \ref{prop:41}.
 The statement in the caption of Figure \ref{fig:cayley} is true because we allow
   $ (\alpha,\beta,\gamma,\delta) = (0,0,0,0)$.
\end{proof}

We now turn to the positive geometry defined by Koefler, Pavlov and Sinn in \cite[Section~6]{KPS}.
Here $f(y)$ is replaced by a general smooth cubic 
with $27$ real lines. For instance, take $f(y)$ to be the polynomial in (\ref{eq:cubicsurface}).
Let $Q  = L_1 L_2 L_3 L_4$ be any of the $90$ curvy quadrilaterals (\ref{eq:quadrilaterals}) on the
surface $V(f)$. Let $q$ be a point in $\PP^3_\R \backslash V(f)$ that is near $Q$,
 so that the each triangle $q L_i$ intersects $V(f)$ only in $L_i$.
Let $P_i$ denote the plane spanned by that triangle for $i=1,2,3,4$.
Finally, let $P_5$ be a general plane that separates $q$ from $Q$.
In particular, $P_5 \cap V(f)$ is a smooth plane cubic curve that is
disjoint from the curvy quadrilateral $Q$.

We consider the pair $(X,Y)$ where $X = \PP^3$ and $Y = V(f) \cup P_1 \cup P_2 \cup P_3 \cup P_4 \cup P_5$.
The six components of the divisor $Y$ bound a region in $X$ that is combinatorially a cube.
We call this a {\em KPS-cube}, after \cite{KPS}, if it satisfies the geometric properties above. 
The KPS-cube has six facets: one is the curvy square $Q$ on the cubic surface,
and the other five are linear. Figure~\ref{fig:KPS} shows two views of a KPS-cube whose nonlinear facet is the quadrilateral in Figure~\ref{fig:adjointquadrilateral}.
\begin{figure}
\centering
\includegraphics[height = 7.1cm]{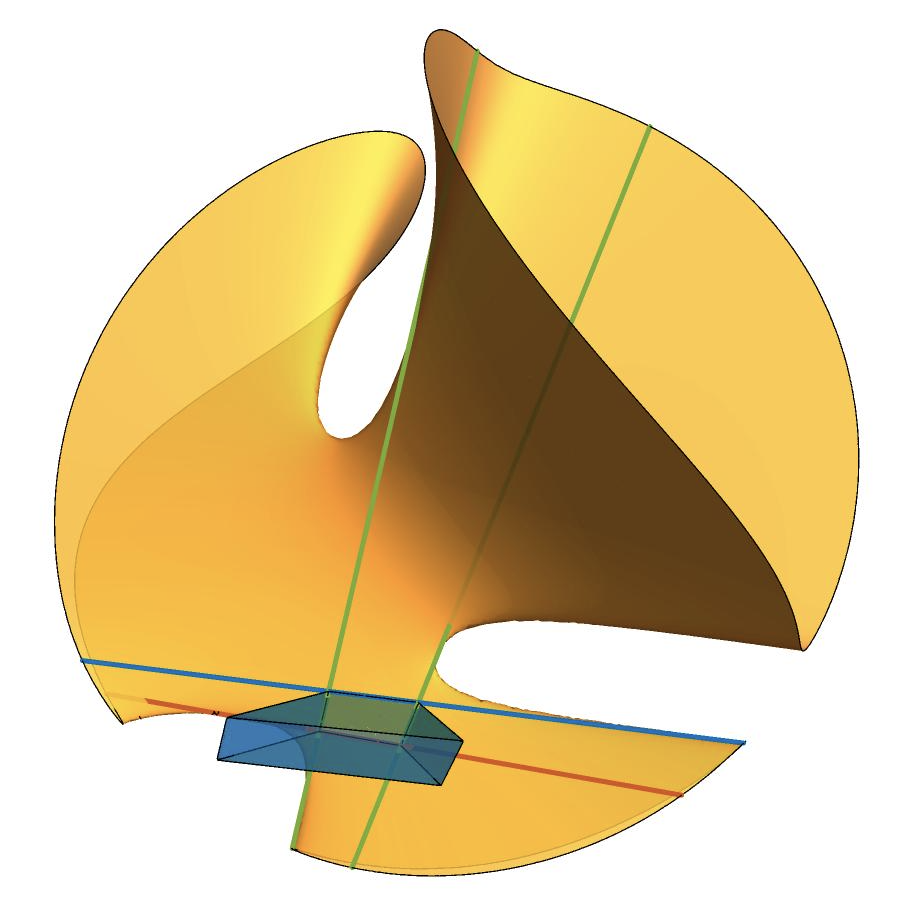}\, \, \, 
\includegraphics[height = 6.9cm]{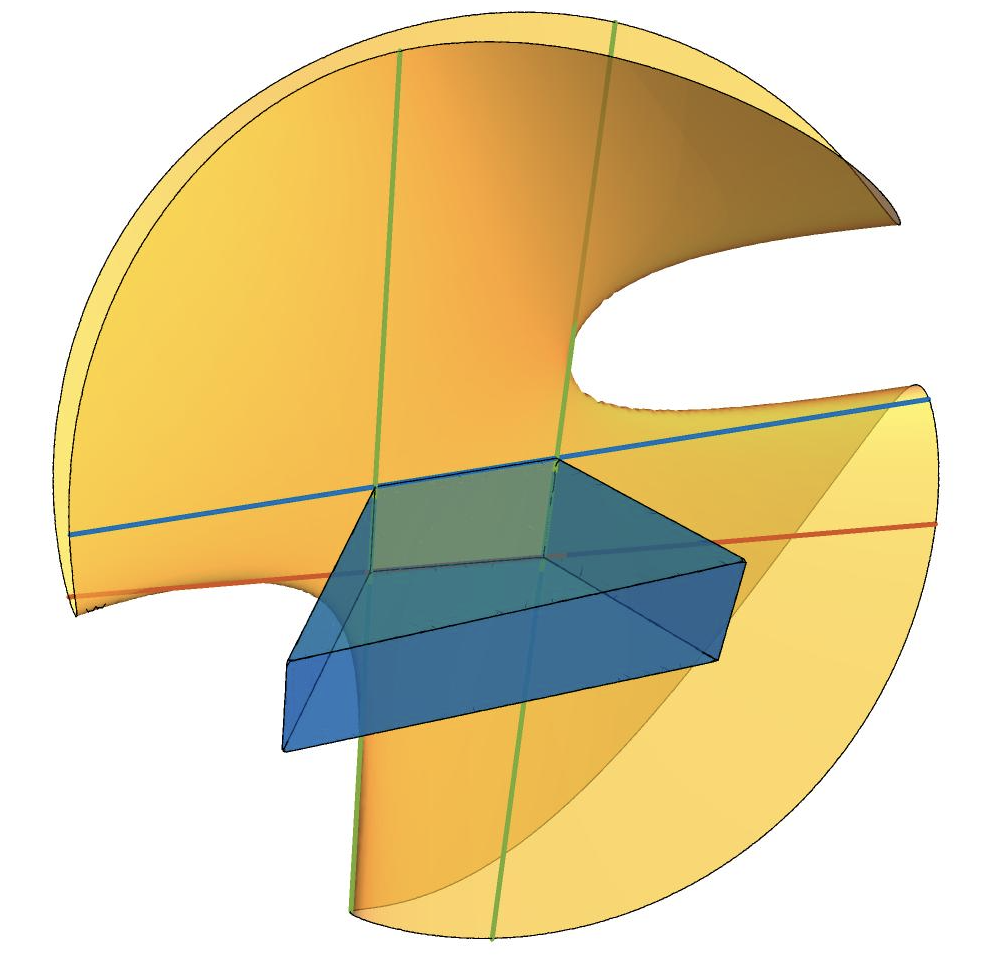}
\caption{A KPS-cube uses a smooth cubic surface for one of its facets.}
\label{fig:KPS}
\end{figure}

\begin{thm} \label{thm:KPS}
Given any quadrilateral $Q$ on a real cubic surface $X$ with $27$ real lines $Y$,
there exists a KPS-cube with facet $Q$. The corresponding pair
$(X,Y)$ has genus one, so it is not a positive arrangement. 
In particular, the KPS-cube is not a positive geometry of $(X,Y)$.
\end{thm}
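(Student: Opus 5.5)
The plan has two parts: first construct a KPS-cube with facet $Q$, then compute the genus of $(X,Y)$ with the Brown--Dupont weight machinery and show that it equals one. (Here $X=\PP^3$ and $Y$ is the cubic surface $V(f)$ together with $P_1,\ldots,P_5$, as in the set-up before the theorem.)

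\emph{Existence.} By Theorem~\ref{thm:130} and the $W({\rm E}_6)$-symmetry it suffices to handle one orbit representative, but the argument should be local and uniform.
\begin{enumerate}
\item Fix an interior point $p$ of $Q$ and the real tangent plane $T_pV(f)$. Choose $q$ on the normal line through $p$, at small distance $\epsilon$ on one side.
\item For each boundary line $L_i$ of $Q$, the plane $P_i=\langle q,L_i\rangle$ meets $V(f)$ in $L_i$ plus a residual conic $C_i$. The triangle $qL_i$, meaning the cone from $q$ over the edge $Q\cap L_i$, lies in a thin neighbourhood of $Q$.
\item I will argue by a limiting (tubular neighbourhood) argument, using that $Q$ is a compact disc in the smooth part of $V(f)_\R$ with transversal corners, that for $\epsilon$ small this triangle meets $V(f)$ only along $L_i$. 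Concretely, I would check that $C_i$ stays a bounded distance away from the edge $Q\cap L_i$. Genericity of $X$ ensures that no conic $C_i$ passes through the vertices of $Q$.
\item Then take $P_5$ to be a general real plane parallel-close to $T_pV(f)$, between $q$ and $Q$. Bertini gives that $P_5\cap V(f)$ is smooth, and closeness gives that it misses $Q$.
\item The six faces then bound a combinatorial cube, because near $Q$ the configuration is a small deformation of the cone over a quadrilateral truncated by a plane.
\end{enumerate}

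\emph{Genus.} I will use the Hodge-theoretic description of $g(X,Y)$ from \cite[Section~3]{BD}. First pass to a log resolution: blow up $q$, where the four planes $P_1,\ldots,P_4$ meet non-normally, and blow up any non-transversal points. This does not change $H_3(X,Y)$ up to pieces of Tate type. The weight spectral sequence for the relative cohomology $H^3(X,Y)$ then has $E_1$-terms given by the cohomology of the closed strata:
\begin{itemize}
\item the surfaces: the cubic, the planes and the exceptional $\PP^2$, all with $H^1=0$;
\item the double curves: the lines $L_i$, the conics $C_i$, the lines $P_i\cap P_j$, the lines in the exceptional divisor, and the smooth plane cubic $E=P_5\cap V(f)$;
\item the triple points.
\end{itemize}
Only $E$ has odd cohomology with a $(1,0)$ part; everything else is of Tate type. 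Hence the only possible contribution to $h^{-1,0}$ of $H_3(X,Y)$ is the term $H^1(E)$ placed in the curve column. Differentials are morphisms of pure Hodge structures, and all neighbouring terms are sums of $\Q(-k)$, so there is no nonzero map into or out of $H^1(E)$. It therefore survives to $E_2=E_\infty$. This gives exactly one nonzero mixed Hodge number with $p>0$, namely $h^{-1,0}=h^{1,0}(E)=1$, so the genus is one.

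Consequently condition~4 of Definition~\ref{def:positivearrangement} fails, so $(X,Y)$ is not a positive arrangement. By Definition~\ref{def:positivegeometry}, positive geometries are only defined relative to positive arrangements, so the KPS-cube is not a positive geometry of this pair.

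I expect the main obstacle to be the existence part, namely the precise claim that the triangles $qL_i$ avoid the residual conics, uniformly up to the corners of $Q$. If the limiting argument is delicate at the vertices, I would fall back on one explicit instance, such as $Q=E_5F_{35}G_5F_{45}$ on the surface \eqref{eq:cubicsurface}, transported by the $W({\rm E}_6)$-action together with a deformation argument. The genus computation only requires identifying the stratification correctly and invoking the purity argument above.
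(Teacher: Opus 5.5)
Your genus computation is correct, and it is the paper's argument made explicit. The paper only observes that the smooth plane cubic $E=P_5\cap V(f)$ is the sole non-Tate stratum and contributes $1$ to $h^{-1,0}$ of $H_3(X,Y)$. Your weight spectral sequence actually proves this: ${\rm gr}^W_1H^3(X,Y)\cong H^1(E)$, because all surface strata and triple points have vanishing $H^1$. Two small points. The resolution must also handle the two points of $L_i\cap C_i$ where $P_i$ is tangent to $V(f)$; this may require blowing up curves as well, but all new strata are rational, so nothing changes. And since $H_3(X,Y)\cong H^{\rm lf}_3(X\backslash Y)$, modifications inside $Y$ leave it unchanged, not merely unchanged up to Tate pieces.

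The gap is in the existence part, and it is not a delicacy at the corners. Step 3 is false for every apex $q$ close to a point $p$ of the interior $Q^\circ$. Projecting from $L_i$ makes $V(f)$ a conic bundle $\pi_i\colon V(f)\to\PP^1$, whose fibres are the residual conics of the planes through $L_i$. In this bundle, $L_{i\pm1}$ lie in the fibres $\langle L_i,L_{i\pm1}\rangle$, and the opposite line $L_{i+2}$ is a section. For $p\in Q^\circ$, the residual conic $C$ of $\langle p,L_i\rangle$ is a smooth real conic through $p$. It misses $L_{i\pm1}$ and meets $L_{i+2}$ exactly once. Hence the arc of $C_\R\cap Q$ through $p$ is not a closed oval and has at most one endpoint on $L_{i+2}$. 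So it leaves $Q$ by crossing the open edge $e_i=Q\cap L_i$ transversally at some point $x$. As $q\to p$, the residual conic $C_i$ of $P_i=\langle q,L_i\rangle$ therefore crosses $L_i$ near $x$, in the relative interior of the base of the triangle $qe_i$, and so enters that triangle. Far from staying a bounded distance from the edge, $C_i$ cuts through it. In the quadric model $z=xy$ with $Q=[0,1]^2$ and $L_1=\{x=z=0\}$, the apex $q=(\frac12,\frac12,\frac14+\delta)$ gives $P_1=\{z=(\frac12+2\delta)x\}$, whose residual line $\{y=\frac12+2\delta\}$ runs straight through the triangle.

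What is actually needed is that each plane $\langle q,L_i\rangle$ lie outside the arc $\pi_i(e_i)$ of tangent planes along $e_i$; this arc is bounded by the tritangent planes $\langle L_i,L_{i\pm1}\rangle$. All four corresponding wedges contain $Q^\circ$, so $q$ must be placed away from the interior of $Q$. In the model, the admissible apexes are $z>\max(x,y)$ or $z<\min(0,x+y-1)$. After that, you still need a global argument that the triangles and the pyramid avoid the rest of $V(f)$.

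Step 4 is unjustified as well. Closeness to $T_pV(f)$ is only a local condition at $p$. At a hyperbolic point, such as any point of $Q$ near the interior of an edge, every plane parallel and close to $T_pV(f)$ cuts $Q$ near $p$; in the model this happens along $(x-\frac12)(y-\frac12)=\eta$. Moreover, $P_5$ must separate the correctly placed $q$ from all of $Q$.

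Your fallback does not repair this. $W({\rm E}_6)$ acts on markings, not by projective automorphisms of a fixed $X$, so it cannot carry a KPS-cube on one quadrilateral to another. A deformation argument only shows that the existence condition is open, not that it is closed. The paper itself merely asserts that suitable $P_1,\ldots,P_5$ can always be found, so this part needs a genuinely different construction of $q$ and $P_5$.
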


\begin{proof}[Sketch of proof]
The first sentence says that there is nothing special about the numerical choices
in \cite[Section 6]{KPS}. We can always find planes $P_1,\ldots,P_5$ that satisfy
the requirements. The key point is that $P_5 \cap V(f)$ is a smooth curve
of genus one, which is disjoint from the real part of the cubic surface $V(f)$.
That genus one contributes $1$ to the mixed Hodge numbers $h^{-p,0}$
of $H_3(X,Y)$. In fact, it is the only non-zero contribution, and we conclude that
$(X,Y)$ has genus one. 
Hence the KPS-cube does not satisfy
Definition \ref{def:positivegeometry} for the given $(X,Y)$.
\end{proof}

The KPS-cube matters for physics because it is a toy model for the amplituhedron \cite{AT}. It was shown in
\cite[Sections 4-5]{KPS} that amplituhedra generally fail to be positive
geometries for their natural boundary arrangements.
   However, by
 \cite[Remark 6.1]{KPS}, the situation can be remedied
by blowing up  $X = \PP^3_\R$ along the 
elliptic curve $E = P_5 \cap V(f)$. Let $X'$ be this blow-up
and let $Y'$ be the modified boundary, which includes the
exceptional divisor $E'$ over $E$. Then $(X',Y')$ still
has genus one, because $X \backslash Y = X' \backslash Y'$.
We now define $Y''$ to be $Y'$ with $E'$ removed.
This changes the geometry considerably.
 The new pair
$(X',Y'')$ has genus zero. It is a positive arrangement.
Moreover, the KPS-cube is entirely unaffected by
the blow-up because it is disjoint from $E$. Therefore, 
it is still a region of $(X',Y'')$. And, in this embedding,
the KPS-cube is now a positive geometry, in the
sense of Definition \ref{def:positivegeometry}. 

It is currently unknown whether a  similar birational transformation 
exists for the $m=2$ amplituhedra in \cite[Section 4]{KPS}. This would turn them
into positive geometries, and hence provide an affirmative answer
to a longstanding open question.
We wish to emphasize that blowing up and removing loci from $Y'$
significantly alters the ambient space. 
These modifications change the combinatorial rank, which we find undesirable.
This underscores the importance of 
choosing a fixed positive arrangement $(X,Y)$ in point 1 of Workflow \ref{workflow}.

\smallskip

We close this section with
another $3$-dimensional positive arrangement
which is ubiquitous in geometry and physics,
namely the moduli space $\mathcal{M}_{0,6}$
with its Deligne-Mumford boundary.
To construct this geometry, we start from the cubic threefold $X = V(g)$ in $\PP^4$ defined by 
$$ g(y) \,=\, y_0 y_1 y_2 + y_1 y_2 y_3 + y_2 y_3 y_4 + y_3 y_4 y_0 + y_4 y_0 y_1. $$
Note the similarity with (\ref{eq:cayley}).
This variety is known as the {\em Segre~cubic} in algebraic geometry.
A very nice introduction appears in Dolgachev's article \cite[Section 2]{Dol}.
The threefold $X$ has $10$ nodal singularities. This is the maximal number of isolated singularities among
irreducible cubics in $\PP^4$, and  $X$ is the unique one up to projective transformation. The Segre cubic $X$ contains
$15$ planes. These form a $(15_4,10_6)$ configuration with the nodes \cite[Proposition~2.2]{Dol}.

\begin{prop} \label{prop:segrecubic} Let $Y$ be the union of the $15$ planes on the Segre cubic $X$.
Then $(X,Y)$ is~a positive arrangement, with
${\rm cr}(X,Y) = {\rm cr}_\R(X,Y) = 24$. Its
$60$ regions are positive~geometries. 
\end{prop}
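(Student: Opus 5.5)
The plan is to identify $X\backslash Y$ with the moduli space $\mathcal{M}_{0,6}$ and transport the known positive geometry of $\overline{\mathcal{M}}_{0,6}$. Classically, the Segre cubic is the image of $\overline{\mathcal{M}}_{0,6}$ under the map given by the linear system that contracts the $10$ boundary divisors isomorphic to $\overline{\mathcal{M}}_{0,4}\times\overline{\mathcal{M}}_{0,4}$ (types $D_{ijk}$) to the $10$ nodes. It sends the $15$ divisors $D_{ij}\cong\overline{\mathcal{M}}_{0,5}$ to the $15$ planes; this matches the $(15_4,10_6)$ configuration of \cite[Proposition~2.2]{Dol}. Hence $X\backslash Y\cong \mathcal{M}_{0,6}$. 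I would verify this explicitly: parametrize six points on $\PP^1$, write the five cubics defining the map, and check by elimination that they satisfy $g$. In addition I would check that the $15$ planes are exactly the images of the $D_{ij}$.

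Next I would check Definition~\ref{def:positivearrangement}. Conditions 1 and 2 are clear, since $X$ and all $15$ planes are defined over $\Q$, and planes have smooth real points. Condition 3 holds because each node lies on $6$ planes, so ${\rm Sing}(X)\subseteq Y$. For genus zero I would use that $H_3(X,Y)$ depends only on $X\backslash Y$ up to compactification data. More concretely, I would apply \cite[Corollary 3.13]{BD} with components $Y_i\cong\PP^2$. Each $Y_i\cap\overline{Y\backslash Y_i}$ is a union of lines in $\PP^2$, so each summand vanishes by \cite[Proposition 3.26(1)]{BD}. The remaining term $g(X)$ vanishes since $X$ is rational; I would treat the nodes via a small resolution or via the blow-up $\overline{\mathcal{M}}_{0,6}$, which has pure Tate cohomology.

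For the combinatorial rank, I would use that $\Omega^3_{\log}(X\backslash Y)$ depends only on $X\backslash Y$. By \cite[Proposition 4.4]{BD} applied to the normal crossing compactification $\overline{\mathcal{M}}_{0,6}$, it equals $\dim H^3(\mathcal{M}_{0,6})$ in top weight. From the Poincaré polynomial $(1+2t)(1+3t)(1+4t)$ of $\mathcal{M}_{0,6}$, this is $2\cdot3\cdot4=24$. The regions are the connected components of $\mathcal{M}_{0,6}(\R)$. These correspond to dihedral orderings of $6$ points on the circle, giving $5!/2=60$ regions, each a curvy associahedron, and all are orientable. I would confirm that the contraction to $X$ does not merge or split real components, since the nodes lie in $Y_\R$. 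By Definition~\ref{def:positivegeometry}, each of the $60$ regions is then a positive geometry.

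The main obstacle is ${\rm cr}_\R=24$. I would deduce it from the known fact that the $60$ Parke–Taylor forms span $H^3(\mathcal{M}_{0,6})$; via \cite[Proposition 2.15]{BD} and recursion on facets, these are exactly the canonical forms of the associahedral regions. Equivalently, the real cells generate $H_3(X,Y)$, which gives surjectivity in Definition~\ref{def:realcombinatorialrank}. The identification of canonical forms with Parke–Taylor forms requires a residue check along the planes, which reduces to the $\mathcal{M}_{0,5}$ case. I would carry this out inductively, mirroring Example~\ref{ex:canformpentagon}.
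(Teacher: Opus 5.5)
Your proposal is correct and follows the paper's route. Both blow up the ten nodes to get $\overline{\mathcal{M}}_{0,6}$, so that $X\backslash Y\cong\mathcal{M}_{0,6}$, use \cite[Proposition 4.4]{BD} for ${\rm cr}=24$, identify the $60$ regions with the curvy (worldsheet) associahedra, and conclude that their canonical forms span $\Omega^3_{\rm log}(X\backslash Y)$. The differences are minor: you get $24$ from the Poincar\'e polynomial $(1+2t)(1+3t)(1+4t)$ of $\mathcal{M}_{0,6}$ (using that $H^3$ is pure of top weight) instead of from the top homology of the dual complex $\Delta(Y')$, and you spell out the genus-zero check and the Parke--Taylor identification of the canonical forms, which the paper leaves implicit.
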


\begin{proof}
Let $X'$ be the blow-up of $X$ at the ten nodes,
and let $Y'$ be the total transform of $Y$. Then $X'$
is a smooth projective $3$-fold, and 
$Y' $ is a normal crossing divisor with $25 = 15+10$ irreducible components.
We learn from \cite[equation (2.2)]{Dol} that $X'$ coincides with the moduli
space $\overline{\mathcal{M}}_{0,6}$ of stable rational curves with six marked points.
Its boundary $Y' = \overline{\mathcal{M}}_{0,6} \backslash \mathcal{M}_{0,6}$
consists of $25$ prime divisors corresponding to stable curves with two components. There are $15$ divisors for which two of the six points lie on one of the components, and $10$ for which each component has three marked points. These are
the $25$ vertices of the dual complex $\Delta(Y')$ in \cite[Definition 4.3]{BD}.
This is a $2$-dimensional simplicial complex with $105$ edges
and $105$ triangles, known as the {\em space of phylogenetic trees} with six leaves.
The top homology of $\Delta(Y')$ has rank $24$, and this is the
combinatorial rank of $(X,Y)$, by \cite[Proposition 4.4]{BD}.

The arrangement complement $X'_\R \backslash Y'_\R$ consists of $60$ open regions, each
combinatorially equivalent to a curvy asscociahedron of dimension $3$,
 with $14$ vertices, $21$ edges and $9$ facets.
 These positive geometries are known as {\em worldsheet associahedra} in particle physics;
 see \cite[Section 6]{ABHY}.
Their $60$ canonical forms span the $24$-dimensional space
$\Omega_{\rm log}^3(X \backslash Y)$.
Hence, all relative homology classes have real representatives, and we conclude that
$ {\rm cr}_\R(X,Y) = 24$.
\end{proof}

\section{Integrals in Physics}
\label{sec7}

Every positive geometry $\sigma$ has a canonical differential form $\omega(\sigma)$, which encodes
 the recursive structure of the boundary of $\sigma$ via iterated residue operations. 
 This canonical form appears in step 5 of Workflow \ref{workflow}.
 In particular, $\omega(\sigma)$ has poles along $\partial \sigma \subset Y$. Given such a pair of a cycle $\sigma$ and a cocycle $\omega(\sigma)$, it is natural to try to construct \emph{periods} by pairing them. 
However, the integral $\int_\sigma \omega(\sigma)$ diverges, because
$\omega(\sigma)$ has poles along the boundary of $\sigma$.

\begin{example} \label{ex:beta1}
Consider the positive arrangement $X = \mathbb{P}^1$, $Y = \{0, 1, \infty \}$ and the region $\sigma = [0, 1]$. By Example \ref{ex:canformlinesegment}, the canonical form equals $\omega(\sigma) = \frac{{\rm d}x}{x(1-x)}$. Integrating on $\sigma$ gives 
\[ \int_{\sigma} \omega(\sigma) \,\, = \,\, \int_{0}^1 \frac{{\rm d} x }{x} + \int_{0}^1 \frac{{\rm d} x }{1-x} 
\,\, = \,\, [\log x]^{1}_0 \,+\, [-\log (1-x) ]^{1}_0 \,\, = \,\, \infty.\]
\end{example}

To remedy this divergence, we introduce regulated versions of the integral. We define
\begin{equation} \label{eq:stringintegral}
{\cal I}_\sigma (s) \, = \, \int_{\sigma} \phi_1^{ s_1} \cdots \phi_m^{\, s_m} \, \omega(\sigma), \end{equation}
where $s = (s_1, \ldots, s_m)$ are complex parameters and $\phi_1, \ldots, \phi_m$ are rational functions 
on $X$ that vanish along the boundary components of $\sigma$. We shall now make this more precise. 

  Let $(X,Y)$ be a positive arrangement of dimension $d$  and let $\sigma \in Z_d(X,Y)$ be one of its 
  regions. Recall that $\sigma$ is closed.
  We assume that $X$ is normal, and $Y = Y_1 \cup \cdots \cup Y_r$ where the $Y_i$ are distinct prime divisors on $X$. 
  We may further assume that $Y_1, \ldots, Y_m$ intersect $\sigma$, and 
  that $Y_{m+1}, \ldots, Y_r$ are disjoint from $\sigma$. The  region $\sigma$ is contained in the open subset
\begin{equation}
\label{eq:Usigma}
 {\cal U}_\sigma \, = \, X \,\backslash \!\bigcup_{i= m+1}^r Y_i.
  \end{equation}
Suppose that there exist rational functions $\phi_i \in \mathbb{R}(X)$, for $i = 1, \ldots, m$,  which satisfy
\begin{equation} \label{eq:phi}
\phi_i(p) \geq 0 \, \, \text{ for each }  \, \, p \in \sigma \quad \text{and} \quad  
{\rm div}(\phi_i)_{|{\cal U}_\sigma} = Y_i \cap {\cal U}_\sigma.
\end{equation}
These $m$ rational functions specify the integrand 
 in (\ref{eq:stringintegral}) which regulates the integral
 $\int_\sigma \omega(\sigma)$.

\begin{example} \label{ex:beta2}
We continue Example \ref{ex:beta1}, where $m = 2$ and $r = 3$. We set $Y_1 = \{0\}, Y_2 = \{1\}, Y_3 = \{\infty\}$. The open subset ${\cal U}_\sigma$ is $\mathbb{P}^1 \backslash Y_3 \simeq \mathbb{C}$. The functions $\phi_1 = x, \phi_2 = 1-x$ satisfy the conditions \eqref{eq:phi}. Notice that these are rational functions on $\mathbb{P}^1$ with poles at $Y_3$. If the real part of $s_1, s_2$ is positive, then the integral \eqref{eq:stringintegral} converges. It equals the \emph{Beta function}
\[ {\cal I}_\sigma(s_1,s_2) \, =\, B(s_1,s_2) \, = \, \int_{0}^1 x^{s_1} (1-x)^{s_2} \frac{{\rm d} x}{x(1-x)}.\]
\end{example}

Not every positive arrangement admits functions $\phi_1, \ldots, \phi_m$ satisfying \eqref{eq:phi}. No such functions exist for $X = \mathbb{P}^1$, $Y = \{0, 1\}$ and $\sigma = [0,1]$. However, in all examples we are aware of, this can be fixed by enlarging $Y$. When the  $\phi_i$ exist, they are rarely unique.
  To keep notation simple, we chose to not make the dependence on $\phi$ explicit in the notation ${\cal I}_\sigma(s)$. 

Taking cues from string theory \cite{AHL}, we now multiply the exponents $s_i$ by the \emph{inverse string tension} $\alpha' >0$. Concretely, the \emph{stringy integral} associated with $\sigma$ and $\phi = (\phi_1, \ldots, \phi_m)$~is 
\begin{equation}
\label{eq:takingcues}
 {\cal I}_{\sigma, \alpha'} (s) 
\,\, = \,\, (\alpha')^d \cdot {\cal I}_\sigma(\alpha' \, s) 
\,\, = \,\, (\alpha')^d \, \int_\sigma \phi_1^{\alpha' \, s_1} \cdots \phi_m^{\alpha' \, s_m} \, \omega(\sigma). 
\end{equation}
The \emph{field theory limit} of this stringy integral is
the function $r_\sigma(s) \, = \,  \lim_{\alpha' \rightarrow 0} {\cal I}_{\sigma,\alpha'}(s)$. The factor $(\alpha')^d$ compensates for the divergence of the unregularized integral $\int_\sigma \omega(\sigma)$ at $\alpha' = 0$.

\begin{example} \label{ex:beta3}
Suppose that ${\rm Re}(s_i) > 0$ and $\alpha' > 0$. We expand ${\cal I}_{\sigma}(\alpha' \, s)$ around $\alpha' = 0$: 
\[ {\cal I}_{\sigma}(\alpha' \, s) \, = \, \int_{0}^1 x^{\alpha' \, s_1} (1-x)^{\alpha' \, s_2} \frac{{\rm d} x}{x(1-x)} \,\,
 = \,\, \frac{s_1 + s_2}{s_1s_2} \, \frac{1}{\alpha'}\, -\, \frac{\pi^2}{6}(s_1+s_2) \, \alpha' \,+\, O(\alpha'^2) . \]
The pole at $\alpha' = 0$ confirms the divergence in Example \ref{ex:beta1}. 
The field theory limit equals
\begin{equation} \label{eq:fieldtheorylimitP1} 
r_\sigma(s) \, = \, {\rm lim}_{\alpha' \rightarrow 0} \, {\cal I}_{\sigma, \alpha'}(s) \, = \, {\rm lim}_{\alpha' \rightarrow 0} \, \alpha' \cdot {\cal I}_{\sigma}(\alpha' \, s)  \, = \, \frac{s_1 + s_2}{s_1s_2}. 
\end{equation}
\end{example}   

We now return to the $130$ regions on the cubic surface $X$ in Section \ref{sec2}.
Here $Y$ is the divisor of $27$ real lines. Let $\sigma \in Z_2(X,Y)$ be a region. 
The blow-down map $\varphi: X \rightarrow \mathbb{P}^2$
transforms ${\cal I}_\sigma(s)$ into the integral ${\cal I}_{\varphi(\sigma)}(s)$, where $\varphi(\sigma)$ is a region of the positive arrangement $(\tilde{X} = \mathbb{P}^2, \tilde Y)$.
The divisor $\tilde Y$ in $\PP^2$ is given by the $ 15$ lines $F_{ij}$ and the $ 6$ conics $G_i$.
 If $\varphi(\sigma) \subset \mathbb{P}_\R^2$ is a triangle, then the simplest integrals ${\cal I}_{\varphi(\sigma)}(s)$ use one extra line that is disjoint from $\varphi(\sigma)$.

\begin{example} \label{ex:trianglestringy}
Let $X = \mathbb{P}^2$ and let $Y$ be the arrangement of  four lines $xyz(x+y+z) = 0$.
The stringy integral of the region $\sigma = \mathbb{P}^2_{\geq 0}$ with canonical form $\,\omega(\sigma) = 
(xyz)^{-1} \,\Omega_{\mathbb{P}^2}\,$ equals
\[ {\cal I}_\sigma (s) \, = \, \int_\sigma \Big (\frac{x}{x + y + z} \Big )^{s_1} \Big (\frac{y}{x + y + z} \Big )^{s_2} \Big (\frac{z}{x + y + z} \Big )^{s_3} \, \frac{\Omega_{\mathbb{P}^2}}{xyz} \, = \, \frac{\Gamma(s_1)\Gamma(s_2)\Gamma(s_3)}{\Gamma(s_1+s_2+s_3)}.\]
This is sometimes called the \emph{multivariate Beta function}. The expression features the \emph{Gamma function} $\Gamma$. The integral converges when ${\rm Re}(s_i) > 0$. A compact way of writing it is
\begin{equation} \label{eq:Utriangle}
{\cal I}_\sigma (s)\,\,=\,\,
 \int_{U_{\geq 0}} u_1^{s_1}u_2^{s_2}u_3^{s_3} \, \omega(U_{\geq 0}),
\end{equation}
where $U_{\geq 0} = \{ u \in \R^3_{\geq 0} \, : \, u_1 + u_2 + u_3 = 1\}$ is the standard triangle.
  The 2-form $\omega(U_{\geq 0})$ is 
\[ \frac{\Omega_{\mathbb{P}^2}(u_1,u_2,u_3)}{u_1u_2u_3} \,\, = \,\, \frac{{\rm d}u_1 \wedge {\rm d}u_2}{u_1u_2}
\, -\,  \frac{{\rm d}u_1 \wedge {\rm d}u_3}{u_1u_3}  \,+\,  \frac{{\rm d}u_2 \wedge {\rm d}u_3}{u_2u_3} .\]
The field theory limit is the leading coefficient of ${\cal I}_\sigma(\alpha' s) = r_\sigma(s) \, \alpha'^{-2} + O(\alpha'^{-1})$,
which is
\begin{equation} \label{eq:fieldtheorylimitP2}
 r_\sigma(s) \, = \,  \frac{1}{s_1s_2} +  \frac{1}{s_2s_3} +  \frac{1}{s_1s_3} \, = \, \frac{s_1 + s_2 + s_3}{s_1s_2s_3}.
\end{equation}
This rational function appears in \cite[Example 14]{ST} as the \emph{amplitude} of the projective plane~$\mathbb{P}^2$.
The integrand in (\ref{eq:Utriangle}) is the likelihood function
for a ternary random variable in statistics.
\end{example}

To find a representation of ${\cal I}_\sigma(s)$ like in \eqref{eq:Utriangle} in more general cases, 
one defines $U_{\geq 0}$ as the image of $\sigma$ under the map $\phi: {\cal U}_\sigma \rightarrow \mathbb{C}^m$
specified by (\ref{eq:phi}). More examples are given below. 

The remainder of this section deals with stringy integrals on quadrilaterals and pentagons. 
In Sections \ref{sec2} and \ref{sec3}, these polygons live on a cubic surface.
Using $W({\rm E}_6)$-symmetry and the blow-down map, we can 
always represent them in the plane $\PP^2$, as shown in
Figures \ref{fig:firstpentagon} and \ref{fig:adjointquadrilateral}.
In toric geometry, it is natural to re-embed these polygons as the positive region of a  toric surface \cite{CT}.
 This construction is interesting for physics, as the corresponding
  $U$-representations like \eqref{eq:Utriangle} give rise to
   \emph{binary geometries} and \emph{$u$-equations} \cite[Section 2]{Lam_moduli}.

\begin{example} \label{ex:quadint}
Let $X = \mathbb{P}^1 \times \mathbb{P}^1$ and $Y$ is given by $x_0x_1y_0y_1(x_0+x_1)(y_0+y_1) = 0$. The region we are interested in consists of the nonnegative points of $\mathbb{P}^1 \times \mathbb{P}^1$.
This is the quadrilateral 
\[ \sigma \, = \, \bigl\{\,((x_0:x_1), (y_0:y_1)) \in \mathbb{P}^1 \times \mathbb{P}^1 \, : \, x_0, x_1, y_0, y_1 \geq 0 \,\bigr\}. \]
Its canonical 2-form is $\omega(\sigma) = (x_0x_1y_0y_1)^{-1} ({\rm d}x_0 \wedge {\rm d}y_0 - {\rm d}x_0 \wedge {\rm d}y_1 - {\rm d}x_1 \wedge {\rm d}y_0 + {\rm d}x_1 \wedge {\rm d}y_1)$.
We choose $\phi_1 = \frac{x_0}{x_0+x_1}, \phi_2 = \frac{x_1}{x_0+x_1}, \phi_3 = \frac{y_0}{y_0+y_1}, \phi_4 = \frac{y_1}{y_0+y_1}$. The function ${\cal I}_\sigma(s)$ factors as
\[{\cal I}_\sigma(s)  \,  = \,  B(s_1,s_2)B(s_3,s_4) \, = \, \frac{\Gamma(s_1) \Gamma(s_2)}{\Gamma(s_1+s_2)} \, \frac{\Gamma(s_3) \Gamma(s_4)}{\Gamma(s_3+s_4)}.\]
One can write ${\cal I}_\sigma(s)$ as an integral of a monomial on $U_{\geq 0} = \{ u \in \mathbb{R}^4_{\geq 0} \, : \, u_1 + u_2 = u_3 + u_4 = 1\}$: 
\[ {\cal I}_\sigma(s) \, = \, \int_{U_{\geq 0}} u_1^{s_1}u_2^{s_2} u_3^{s_3}u_4^{s_4} \, \omega(U_{\geq 0}). \]
The form $\omega(U_{\geq 0})$ is obtained by substituting $(x_0,x_1,y_0,y_1) = (u_1,u_2,u_3,u_4)$ 
into $\omega(\sigma)$. 

The field theory limit appears, up to a change of variables, in \cite[Example 15]{ST}: 
\begin{equation}
\label{eq:fieldlimit1}
 r_\sigma(s) \, = \, \frac{1}{s_1s_2} +  \frac{1}{s_2s_3} +  \frac{1}{s_3s_4} +  \frac{1}{s_1s_4} \, = \, \frac{(s_1+s_2)(s_3+s_4)}{s_1s_2s_3s_4}.
 \end{equation}
\end{example}

\begin{example} \label{ex:pentint}
We consider the smooth toric surface $X$ corresponding to the lattice pentagon with vertices $(0,0), \, (1,0), \, (2,1), \, (2,2), \, (0,2)$. The dense torus of $X$ is $(\mathbb{C}^*)^2$.
In $(\mathbb{C}^*)^2$ we~set
\[ Y_6^\circ \, = \,  V(1+t_1), \quad Y_7^\circ \,  = \,  V(1+t_2), \quad  Y_8^\circ \,  = \,  V(1+t_2+t_1t_2). \]
Their closures in $X$ are   $Y_6,Y_7,Y_8$.
    The divisor $Y$ is $Y_1 \cup \cdots \cup Y_8$, where $Y_1, \ldots, Y_5$ are the curves on $X$
    given by the edges of the pentagon. These curves bound the nonnegative region $\sigma $ in $ X$.
    We identify $\sigma$ with a pentagon via the moment map. The following  satisfies \eqref{eq:phi}: 
\[ \phi \, = \, \Big ( \,\frac{t_1}{1+t_1}, \,\frac{t_2(1+t_1)}{1+t_2+t_1t_2}, \,
\frac{1+t_2+t_1t_2}{(1+t_1)(1+t_2)},\, \frac{1+t_2}{1+t_2+t_1t_2}, \,\frac{1}{1+t_2}\, \Big ). \]
These are rational functions on $X$, represented in local coordinates on $(\mathbb{C}^*)^2 \subset X$. We have 
\begin{equation}
\label{eq:fivefive}
{\cal I}_\sigma(s) \, = \, \int_{\sigma} \phi_1^{s_1} \phi_2^{s_2}  \phi_3^{s_3}  \phi_4^{s_4}  \phi_5^{s_5} \, \omega(\sigma) \, = \, \int_{U_{\geq 0}} u_1^{s_1}u_2^{s_2}u_3^{s_3}u_4^{s_4}u_5^{s_5} \, \omega(U_{\geq 0}). 
\end{equation}
Here $\omega(\sigma) \, = \, \frac{{\rm d}t_1}{t_1} \wedge \frac{{\rm d}t_2}{t_2}$ and $U_{\geq 0} = U \cap \mathbb{R}^5_{\geq 0}$ where $U$ is the surface defined by the {\em $u$-equations}
\begin{equation}
\label{eq:ueqns5}
 u_1+u_3u_4\,=\,u_2+u_4u_5\,=\,u_3+u_1u_5\,=\,u_4+u_1u_2 \,=\, u_5+u_2u_3\,=\,1.
 \end{equation}
This affine surface is parametrized by the map $\phi: {\cal U}_\sigma \rightarrow \mathbb{C}^5$.
The form $\omega(U_{\geq 0})$ is given by
\begin{small}
$$
\frac{{\rm d} u_1 \wedge {\rm d} u_2}{u_1 u_2}
+ \frac{{\rm d} u_1 \wedge {\rm d} u_3}{u_1 u_3}
- \frac{{\rm d} u_1 \wedge {\rm d} u_5}{u_1 u_5}
+ \frac{{\rm d} u_2 \wedge {\rm d} u_3}{u_2 u_3}
+ \frac{{\rm d} u_2 \wedge {\rm d} u_4}{u_2 u_4}
+ \frac{{\rm d} u_3 \wedge {\rm d} u_4}{u_3 u_4}
+ \frac{{\rm d} u_3 \wedge {\rm d} u_5}{u_3 u_5}
+ \frac{{\rm d} u_4 \wedge {\rm d} u_5}{u_4 u_5}.
$$
\end{small}
The open subset ${\cal U}_\sigma \subset X$ is affine. It is a partial compactification of the moduli space 
$\mathcal{M}_{0,5}$ of
five marked points on the Riemann sphere \cite[Section 2]{Brown}. The field theory limit of (\ref{eq:fivefive}) is 
\begin{equation}
\label{eq:fieldlimit2}
 r_\sigma(s) \, = \, \frac{1}{s_1s_2} +  \frac{1}{s_2s_3} +  \frac{1}{s_3s_4} +  \frac{1}{s_4s_5} +  \frac{1}{s_1s_5} \,  =  \, \frac{s_3s_4s_5 + s_1s_4s_5 + s_1s_2s_5 + s_1s_2s_3 + s_2s_3s_4}{s_1s_2s_3s_4s_5}. 
 \end{equation}
For physicists, this rational function is a biadjoint scalar amplitude in $\phi^3$ theory.
Note that its
numerator is the \emph{Segre cubic threefold},
which became $\mathcal{M}_{0,6}$ in the proof of Proposition~\ref{prop:segrecubic}.
\end{example}

\section{The Pezzotope}
\label{sec8}

Our most important positive geometry is the ${\rm E}_6$ pezzotope, which has dimension four.
Its interior is embedded in the moduli space $\mathcal{Y}(3,6) = X \backslash Y$ of marked cubic surfaces,
studied in~\cite{EGPSY, HKT, Nar, SY}. This section describes the pezzotope and its $U$-variety 
from Section~\ref{sec7}. Section \ref{sec9} explains ${\cal Y}(3,6)$, and the positive arrangement $(X,Y)$ is discussed in Section~\ref{sec10}.

The pezzotope is the simple $4$-polytope with $45$ vertices whose edge graph is shown in Figure~\ref{fig:pezzotope}.
  Five of its $15$ facets are cubes, which make $40$ vertices.
 Each cube is a bipartite graph with  $8$ vertices and $12$~edges. 
 Five additional vertices are connected
 to four cubes each, namely to one side of each bipartition.
 Pairs of cubes are linked via the other sides of their bipartitions.
 Thus the total number of edges seen in Figure~\ref{fig:pezzotope}
is $90 = 5 \cdot 12 + 5 \cdot 4 + \binom{5}{2}$.

Notice that the field-theory limits \eqref{eq:fieldtheorylimitP1}, \eqref{eq:fieldtheorylimitP2}, \eqref{eq:fieldlimit1}, and \eqref{eq:fieldlimit2} encode the vertices of the polygonal region $\sigma$ from which they arise. In this section, we reverse this perspective and reconstruct the pezzotope from a suitable rational function.
More precisely, we consider a rational function in $15$ variables $s_1,\ldots,s_{15}$, defined as the {\em ${\rm E}_6$ amplitude} in \cite[Theorem 8.2]{EGPSY}:
\begin{equation}
\label{eq:E6amplitude}
\!\!\!\!\!\!
\begin{small}
\begin{matrix}
\textcolor{myblue}{
\frac{1}{s_{10}s_{15}s_4s_6}+
\frac{1}{s_{10}s_{15}s_2s_7}+
\frac{1}{s_{15}s_2s_5s_6}+
\frac{1}{s_{15}s_4s_5s_7}+
\frac{1}{s_{15}s_2s_5s_7}+
\frac{1}{s_{15}s_4s_5s_6}+
\frac{1}{s_{10}s_{15}s_4s_7}+
\frac{1}{s_{10}s_{15}s_2s_6}} \smallskip
\\[2pt]
\textcolor{myorange}{
\frac{1}{s_{13}s_3s_8s_9}+
\frac{1}{s_{13}s_2s_3s_7}+
\frac{1}{s_{13}s_2s_5s_9}+
\frac{1}{s_{13}s_5s_7s_8}+
\frac{1}{s_{13}s_2s_5s_7}+
\frac{1}{s_{13}s_5s_8s_9}+
\frac{1}{s_{13}s_3s_7s_8}+
\frac{1}{s_{13}s_2s_3s_9}} \smallskip
\\[2pt]
\textcolor{mygreen}{
\frac{1}{s_1s_{14}s_8s_9}+
\frac{1}{s_1s_{14}s_4s_6}+
\frac{1}{s_{14}s_5s_6s_9}+
\frac{1}{s_{14}s_4s_5s_8}+
\frac{1}{s_{14}s_4s_5s_6}+
\frac{1}{s_{14}s_5s_8s_9}+
\frac{1}{s_1s_{14}s_4s_8}+
\frac{1}{s_1s_{14}s_6s_9}} \smallskip
\\[2pt]
\textcolor{mypurple}{
\frac{1}{s_1s_{12}s_3s_8}+
\frac{1}{s_1s_{10}s_{12}s_4}+
\frac{1}{s_{10}s_{12}s_3s_7}+
\frac{1}{s_{12}s_4s_7s_8}+
\frac{1}{s_{10}s_{12}s_4s_7}+
\frac{1}{s_{12}s_3s_7s_8}+
\frac{1}{s_1s_{12}s_4s_8}+
\frac{1}{s_1s_{10}s_{12}s_3}} \smallskip
\\[2pt]
\textcolor{mybordeaux}{
\frac{1}{s_1s_{11}s_3s_9}+
\frac{1}{s_1s_{10}s_{11}s_6}+
\frac{1}{s_{10}s_{11}s_2s_3}+
\frac{1}{s_{11}s_2s_6s_9}+
\frac{1}{s_{10}s_{11}s_2s_6}+
\frac{1}{s_{11}s_2s_3s_9}+
\frac{1}{s_1s_{11}s_6s_9}+
\frac{1}{s_1s_{10}s_{11}s_3}} \smallskip
\\[2pt]
{
\frac{1}{s_1s_3s_8s_9}+
\frac{1}{s_1s_4s_6s_{10}}+
\frac{1}{s_2s_3s_7s_{10}}+
\frac{1}{s_2s_5s_6s_9}+
\frac{1}{s_4s_5s_7s_8}}
\end{matrix}
\end{small}
\end{equation}
The colors point to those in Figure \ref{fig:pezzotope}: there are eight terms for each of the cubes. 
Let $\Delta$ be the corresponding $3$-dimensional simplicial complex with vertex set $\{s_1,\ldots,s_{15}\}$.
Thus $\Delta$ has $60$ edges, $90$ triangles and $45$ tetrahedra, namely the
quadruples $s_i s_j s_k s_l$  in the $45$ denominators of (\ref{eq:E6amplitude}).
We note that $\Delta$ is the {\em clique complex} of its edge graph $G$, with
$15$ vertices and $60$ edges, i.e., a set of vertices is in $\Delta$ if and only if all of its pairs are edges in~$G$. The complex $\Delta$ encodes the normal fan of the pezzotope; see Theorem \ref{thm:pezzotopeispolytope}. The $90$ edges in Figure \ref{fig:pezzotope} are the $90$ triangles of $\Delta$ and the $90$ pairs of terms in (\ref{eq:E6amplitude})
that share three~variables.

Our discussion suggests that the
${\rm E}_6$ amplitude (\ref{eq:E6amplitude})
should be the field theory limit of a stringy integral~(\ref{eq:takingcues}).
To derive the stringy integral \eqref{eq:fieldlimitpezzo}, we consider another representation of  the simplicial complex $\Delta$.
This is the following system of $15$ $u$-equations in $15$ variables:
			\begin{equation}
				\label{eq:perfectuE6}  \begin{small} \begin{matrix}
						u_1 + u_2 u_5 u_7 u_{13} u_{15} \,=\,
						u_2 + u_1 u_4 u_8 u_{12} u_{14} \,=\,
						u_3 + u_4 u_5 u_6 u_{14} u_{15} \,=\,
						u_4 + u_2 u_3 u_9 u_{11} u_{13} \,= \\
						u_5 + u_1 u_3 u_{10} u_{11} u_{12}\, =\,
						u_6 + u_3 u_7 u_8 u_{12} u_{13} \,=\,
						u_7 + u_1 u_6 u_9 u_{11} u_{14} \,=\,
						u_8 + u_2 u_6 u_{10} u_{11} u_{15} \,= \,\\
						u_9 + u_4 u_7 u_{10} u_{12} u_{15} \,\,= \,\,
						u_{10} + u_5 u_8 u_9 u_{13} u_{14} \,\,=\,\,
						u_{11} + u_4 u_5 u_7 u_8 u_{12} u_{13} u_{14} u_{15}\,\, = \\
						u_{12} + u_2 u_5 u_6 u_9 u_{11} u_{13} u_{14} u_{15} \quad = \quad
						u_{13} + u_1 u_4 u_6 u_{10} u_{11} u_{12} u_{14} u_{15} \quad=\,\, \\ \qquad
						u_{14} + u_2 u_3 u_7 u_{10} u_{11} u_{12} u_{13} u_{15} \quad = \quad
						u_{15} + u_1 u_3 u_8 u_9 u_{11} u_{12} u_{13} u_{14} \quad = \quad 1.
				\end{matrix} \end{small}
			\end{equation}
These equations are the trinomials $\,u_i + \prod_{ij \not\in G} u_j -1 \,$ for $i=1,2,\ldots,15$.
The system (\ref{eq:perfectuE6}) encodes the graph $G$ and hence its clique complex $\Delta$.
A computation yields the next result.

\begin{lem} \label{lem:15trinomials}
The $15$ trinomials in (\ref{eq:perfectuE6}) generate a prime ideal
in the polynomial ring $\C[u_1,\ldots,u_{15}]$. The 
irreducible variety $U$ it defines in $\C^{15}$ has dimension $4$ and degree $192$.
\end{lem}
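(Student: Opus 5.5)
The plan is a computational proof in three steps: primality, dimension, and degree. Write $I = \langle f_1,\ldots,f_{15}\rangle$ with $f_i = u_i + \prod_{ij\notin G} u_j - 1$.

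\emph{Dimension.} First I show every component of $V(I)$ has dimension at least $4$, then exhibit a $4$-dimensional piece. Since $V(I)$ is cut out by $15$ equations in $\C^{15}$, Krull's bound gives nothing, so I use the geometry instead. Near the vertex of $U_{\ge 0}$ attached to a tetrahedron $\{s_i,s_j,s_k,s_l\}$ of $\Delta$, I set $u_i=u_j=u_k=u_l=0$. The remaining $11$ variables are then forced: every other $u_m$ has a non-neighbour among $i,j,k,l$, because $\Delta$ is the clique complex of $G$ and each tetrahedron is a maximal clique. So $u_m=1$ for those $m$. The Jacobian of $I$ at this point has rank $11$, since $\partial f_m/\partial u_m=1$ and the other entries vanish suitably. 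This makes the point a smooth point of a $4$-dimensional component, with local coordinates $u_i,u_j,u_k,u_l$. This is the usual ``perfect binary geometry'' mechanism from \cite{CT} and \cite[Section 2]{Lam_moduli}.

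\emph{Primality.} This is the main obstacle. I will compute a Gr\"obner basis of $I$ over $\Q$ and then check primality directly. The route I would try first is saturation plus localization:
\begin{enumerate}
\item Show $I = I : (u_1\cdots u_{15})^\infty$, i.e.\ no component lies in a coordinate hyperplane.
\item Show that on the torus, $V(I)\cap(\C^*)^{15}$ is the image of an irreducible rational parametrization. One option is the dense torus of the toric model of \cite{CT}, where the $u_i$ are ratios of products of polynomials. Another is to realize $U$ as a partial compactification of $\mathcal{Y}(3,6)$ via the $u$-variables of \cite{EGPSY}.
\end{enumerate}
Irreducibility of the torus part plus saturation gives primality, provided $I$ is radical. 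I would verify radicality by checking that $I$ is the full kernel of the corresponding ring map $\C[u]\to\C[\text{parameters}]$, using elimination. Failing that, I would run a primary decomposition in Macaulay2 or Singular over $\Q$ and confirm absolute primality by a generic-fibre or numerical irreducible decomposition check (monodromy).

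\emph{Degree.} With primality established, the degree is read off the Hilbert polynomial of the homogenization of a Gr\"obner basis. As an independent check, I intersect $U$ with $4$ random affine hyperplanes and count solutions numerically, expecting $192$. The match with the log canonical surface degree $192$ in the earlier remark is a coincidence of numerics that I would not rely on. The bottleneck I expect is the Gr\"obner computation in $15$ variables with degree-$9$ trinomials. If it stalls, I would exploit the $W(\mathrm{E}_6)$-derived symmetry of $G$ and eliminate variables $u_{11},\ldots,u_{15}$ first, since they appear linearly in their own equations.
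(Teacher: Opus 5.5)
Your plan follows the paper's route. The paper justifies this lemma only by ``a computation'', and your primality and degree steps are legitimate ways to carry it out: implicitization by elimination, the Hilbert polynomial, and slicing with four random affine hyperplanes.

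Two simplifications are available.
\begin{enumerate}
\item If you verify that $I$ equals the kernel of $\C[u]\to\C(a,b,c,d)$, $u\mapsto\psi(a,b,c,d)$, that already proves $I$ is prime in $\C[u]$. The kernel of a map into a domain is prime, and the kernel over $\C$ is generated by the one computed over $\Q$, because elimination commutes with extending the field. Saturation, the separate radicality check and the monodromy test then become superfluous.
\item For the degree, homogenize a Gr\"obner basis for a degree-compatible order. Otherwise the homogenized elements need not generate the ideal of the projective closure.
\end{enumerate}

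The step that does not work as written is the dimension argument. Put $T=\{i,j,k,l\}$ and let $p$ be the point with $u_T=0$ and all other $u_m=1$. The eleven trinomials $f_m$ with $m\notin T$ have the identity as Jacobian in the variables $u_m$, $m\notin T$. So they cut out a smooth germ $W$ that is a graph over $u_T$.

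The full Jacobian does have rank exactly $11$, but not because entries ``vanish suitably''. The rows of $f_m$ for $m\in T$ are nonzero. At $p$ each such row equals the sum of the rows of $f_n$ over the non-neighbours $n$ of $m$, and this holds only because every triangle of $\Delta$ lies in exactly two tetrahedra.

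More importantly, rank $11$ only says the Zariski tangent space of $V(I)$ at $p$ is $4$-dimensional, so $\dim_p V(I)\le 4$. It does not imply that $f_i,f_j,f_k,f_l$ vanish identically on $W$, which is what ``smooth point of a $4$-dimensional component'' requires. Compare $V(y-x^2,\,y)\subset\C^2$: its Jacobian has rank $1$ at the origin, yet the variety is a point. Your announced lower bound ``every component has dimension at least $4$'' is also never proved, and it is not needed.

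To repair this, take the lower bound from the parametrization of Lemma~\ref{lem:uparaE6}. Its image lies in $V(I)$ and is $4$-dimensional, because the monomial map (\ref{eq:abcdfrac}) inverts it. Once $I$ is known to be prime, your Jacobian computation gives the matching upper bound, and it even shows that $U$ is smooth at the $45$ vertex points. Alternatively, read the dimension off the Hilbert polynomial you compute anyway for the degree.
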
			

Next, we find the functions $\phi_i$ from \eqref{eq:phi} from a parametric representation of $U$.
Let $P$ be a $3 \times 6$ matrix
whose columns are coordinates for six points in $\PP^2 $.
We write $p_{ijk} $ for the $3 \times 3$ determinant formed by the
columns $i,j,k$ of~$P$.
We assume that no three points lie on a line (i.e., $p_{ijk} \not= 0$)
and the six points
are not on a conic. This means algebraically that
  \begin{equation}
                \label{eq:coconic}
                q \,\,=\,\, p_{134}\,p_{156}\,p_{235}\,p_{246} - p_{135} \,p_{146}\, p_{234}\, p_{256} \, \, \neq \, \, 0.
        \end{equation}
The following lemma is also  proved by a direct computation:

		\begin{lem} \label{lem:uparaE6}
			The variety $U$ defined by the $u$-equations  in (\ref{eq:perfectuE6}) has the parametrization
			$$
			\begin{matrix}
				\! u_1 = \frac{-q}{p_{126}p_{135}p_{234}p_{456}},
				&\!\! \!u_2 = \frac{p_{134}p_{156}p_{235}p_{246}}{p_{135}p_{146}
					p_{234}p_{256}} ,&  u_3 = \frac{p_{134}p_{356}}{p_{135}p_{346}}, & 
				\! u_4 = \frac{p_{136}
					p_{145}}{p_{135}p_{146}}, &     
				\!\! u_5 = \frac{p_{125}p_{136}p_{246}p_{345}}{p_{126}p_{135}p_{245} p_{346}}, \medskip \\
				u_6 = \frac{p_{136}p_{235}}{p_{135}p_{236}}, & \!\! \! u_7 = \frac{p_{123}p_{145}p_{246}
					p_{356}}{p_{124}p_{135}p_{236}p_{456}} ,& u_8 = \frac{p_{125}p_{356}}{p_{135}p_{256}},
				& \! u_9 = \frac{p_{125}p_{134}}{p_{124}p_{135}}, &
				u_{10} = \frac{p_{145}p_{235}}{p_{135} p_{245}} , \medskip \\
				u_{11} = \frac{p_{135}p_{234}}{p_{134}p_{235}}, &
				u_{12} = \frac{p_{135}p_{456}}{p_{145} p_{356}}, & \!
				\!\!\!\! u_{13} = \frac{p_{124}p_{135}p_{256}p_{346}}{p_{125}p_{134}p_{246} p_{356}}, \! &
				\!\! u_{14} = \frac{p_{126}p_{135}}{p_{125}p_{136}} ,&
				\!\!   u_{15} = \frac{p_{135}p_{146}p_{236}  p_{245}}{p_{136}p_{145}p_{235}p_{246}}.
			\end{matrix}
			$$
			\end{lem}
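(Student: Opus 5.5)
The plan is to verify the parametrization by substitution, using Plücker relations among the $3\times 3$ minors $p_{ijk}$, and then argue that its image is dense in $U$. First I check that each of the $15$ rational functions $u_i(P)$ satisfies its trinomial $u_i + \prod_{ij\notin G} u_j = 1$. The key tool is the three-term Plücker relation: for indices $a,b$ and $c,d,e,f$,
$$ p_{abc}\,p_{ade} - p_{abd}\,p_{ace} + p_{abe}\,p_{acd} \;=\; 0 $$
(in the appropriate Grassmann–Plücker form). For the simple coordinates like $u_3 = \frac{p_{134}p_{356}}{p_{135}p_{346}}$, I expect the complementary product $u_4u_5u_6u_{14}u_{15}$ to telescope to a ratio of two products of minors. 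In that case $1-u_3$ becomes $\frac{p_{135}p_{346}-p_{134}p_{356}}{p_{135}p_{346}}$. A single Plücker relation turns this numerator into a product $p_{13\cdot}p_{\cdot\cdot\cdot}$ matching the telescoped product.

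For the equations involving $u_1$ and $u_2$, the coconic polynomial $q$ from \eqref{eq:coconic} must appear. Indeed, $1-u_2 = \frac{-q}{p_{135}p_{146}p_{234}p_{256}}$ directly, so the check $u_2 + u_1u_4u_8u_{12}u_{14} = 1$ reduces to cancellation of monomials in the $p$'s. The equation for $u_1$ needs the identity $p_{126}p_{135}p_{234}p_{456} + q = (\text{product of four minors})$. This is a degree-$4$ identity that I verify by expanding with two Plücker relations, or simply by symbolic computation on a generic $3\times 6$ matrix. By the $W(\mathrm{E}_6)$-symmetry (permutations of the six points together with the Cremona involution \eqref{eq:cremona}, which permutes the $u_i$ and the trinomials), many of these checks are equivalent. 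In practice, though, I will simply verify all fifteen identities in computer algebra over $\Q(P)$, after gauge-fixing $P$ to the form $[I_3 \mid *]$.

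Second, I show the image is all of (a dense open subset of) $U$. The parametrization factors through the moduli space $\mathcal{Y}(3,6)$, since each $u_i$ is invariant under the torus and $\mathrm{GL}_3$ actions: every index appears equally in numerator and denominator. That moduli space has dimension $4$. I check that the Jacobian of $(u_1,\dots,u_{15})$ with respect to the four free entries of a gauge-fixed matrix, say with columns $e_1,e_2,e_3,(1,1,1),(1,a,b),(1,c,d)$, has rank $4$ at a random point. The closure of the image is then an irreducible $4$-dimensional subvariety of $U$. Since $U$ is irreducible of dimension $4$ by Lemma \ref{lem:15trinomials}, the closure equals $U$. Hence the map is a parametrization, indeed birational if one further checks generic injectivity, e.g.\ by recovering $a,b,c,d$ from cross-ratio coordinates such as $u_3,u_6,u_9,u_{10}$.

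I expect the main obstacle to be the bookkeeping in step one: matching each $1-u_i$ with the correct product of the other $u_j$, especially for the long products in the equations for $u_{11},\dots,u_{15}$ and for $u_1$ with its factor $q$. I would handle this by computer algebra rather than by hand.
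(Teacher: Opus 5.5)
Your proposal is correct and takes essentially the paper's approach, which proves the lemma by direct symbolic verification. Your sample reductions check out: the $u_2$- and $u_3$-equations reduce as you say, and the $u_1$-equation reduces to the alternative coconic form $q = p_{123}p_{156}p_{246}p_{345} - p_{126}p_{135}p_{234}p_{456}$. Your dominance step via Lemma~\ref{lem:15trinomials} is exactly the argument the paper gives in Proposition~\ref{prop:positiveparam}.

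One small slip in your optional birationality remark: in your gauge, $u_3,u_6$ involve only $a,c$ and $u_9,u_{10}$ involve only $a,b$, so these four cannot recover $d$. Replacing $u_{10}$ by $u_8 = b(c-a)/(a(d-b))$ fixes this.
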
			
The expressions in this parametrization should be thought of as cross ratios
for six points in $\PP^2$. They are invariant under row operations and under scaling the columns of $P$. This observation allows us to parametrize $U$ birationally. Following \cite[Theorem 10.6]{EGPSY}, we define 
\begin{equation}
    \label{eq:sixpoints7}  P \,\,=\,\, \left[
  \begin{array}{cccccc}
    1 & 0 & 0 & 1 & \frac{b+1}{b} & \frac{b c+c+1}{b c} \medskip \\        
    0 & 1 & 0 & \!-1 &\!\! -\frac{\left(a+1\right)
      \left(b+1\right)}{a b+b+1} & -\frac{\left(a+1\right)
      \left(b c d + b c + b d + c d + c + d + 1      
      \right)}{
      a b c d + a b c + a b d + b c d + b c + b d + c d + c + d + 1  }\!\! \medskip \\
    0 & 0 & 1 & 1 & 1 & 1 \\                                                                                    
        \end{array}                                                                                                         
  \right].
  \end{equation}
  
  \begin{prop} \label{prop:positiveparam}
  Restricting the map from Lemma \ref{lem:uparaE6} to the matrix $P$ from \eqref{eq:sixpoints7} gives a birational map $\psi: \mathbb{C}^4 \dashrightarrow U$. Moreover, the restriction $\psi_{|\mathbb{R}^4_{> 0}}: \mathbb{R}^4_{>0} \rightarrow U \cap \mathbb{R}^{15}_{>0}$ is a bijection.
  \end{prop}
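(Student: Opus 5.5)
Substituting the matrix $P$ from \eqref{eq:sixpoints7} into the formulas of Lemma \ref{lem:uparaE6}, I first compute all $20$ minors $p_{ijk}$ as rational functions of $(a,b,c,d)$. The key claim is that the minors, together with $q$ from \eqref{eq:coconic}, factor as products of the denominators of $P$ and polynomials in $a,b,c,d$ with nonnegative coefficients, up to a global sign. The matrix \eqref{eq:sixpoints7} is designed exactly for this, following \cite[Theorem 10.6]{EGPSY}. So I will factor the $20$ minors and $q$ symbolically and record the irreducible factors, which I expect to be things like $a,b,c,d,\,ab+b+1,\,bc+c+1$, and the two long polynomials appearing in the sixth column.

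Inserting these factorizations into Lemma \ref{lem:uparaE6}, each $u_i$ becomes a ratio of products of such subtraction-free polynomials, with signs cancelling. This is verified by tracking the signs of the minors. Hence $\psi$ maps $\mathbb{R}^4_{>0}$ into $U\cap\mathbb{R}^{15}_{>0}$. By Lemma \ref{lem:uparaE6}, $\psi(\mathbb{C}^4)$ lies in $U$, which is irreducible of dimension $4$ by Lemma \ref{lem:15trinomials}.

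For birationality, I will exhibit an explicit inverse. Among the $u_i$ are short monomial expressions in the parameters; for instance $u_9,u_{14},u_3,u_6$ have two-term cross-ratio forms. I expect four of the coordinates, or simple combinations of them, to be of the form $1/(1+a)$ or $1/(1+b+\cdots)$. This lets me solve successively for $a$, then $b$, then $c$, then $d$ as rational functions of $u$, each step involving only subtraction-free operations of the form $x = (1-u_i)/(u_i\cdot\text{known})$. The resulting rational map $\theta: U \dashrightarrow \mathbb{C}^4$ satisfies $\theta\circ\psi = \mathrm{id}$, which I check symbolically. Since $\dim U=4$ and $U$ is irreducible, $\psi$ is dominant and generically injective, hence birational. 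Alternatively, one can check that the Jacobian of $\psi$ has rank $4$ and that the generic fibre is a single point.

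For bijectivity on positive parts, injectivity follows from $\theta\circ\psi=\mathrm{id}$ on $\mathbb{R}^4_{>0}$, where all denominators are positive. Surjectivity is the main obstacle. Given $u\in U\cap\mathbb{R}^{15}_{>0}$, the $u$-equations force $0<u_i<1$ for all $i$. I then need $\theta(u)$ to lie in $\mathbb{R}^4_{>0}$ and $\psi(\theta(u))=u$. Positivity of $\theta(u)$ should follow from the shape of the inverse formulas, using $u_i<1$. To see $\psi(\theta(u))=u$, I would argue that the four chosen coordinates determine the remaining eleven on $U\cap \mathbb{R}^{15}_{>0}$. This can be done by solving the trinomial equations \eqref{eq:perfectuE6} recursively: each remaining $u_j$ equals $1-(\text{monomial in others})$, but this is circular. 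So I would instead use that $U\cap(\mathbb{C}^*)^{15}\to(\mathbb{C}^*)^4$, projecting to the four coordinates, is injective. I would check this by a Gröbner basis computation in an elimination order, showing that every other $u_j$ is a rational function of the four with denominators nonvanishing on the positive part. If that fails, the fallback is the general fact from \cite{Lam_moduli} or \cite{CT} that the positive part of a binary geometry $U$ is a single cell homeomorphic to the interior of the polytope, and $\psi(\mathbb{R}^4_{>0})$ is open and closed in it.
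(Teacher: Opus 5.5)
Your positivity and birationality steps are the paper's argument. The paper likewise writes every minor $p_{ijk}$ and $-q$ as Laurent monomials in $a,b,c,d$ and the subtraction-free factors $g_1,\ldots,g_{11}$ of \eqref{eq:g11}. It then proves birationality from a left inverse together with Lemma~\ref{lem:15trinomials}. The left inverse you are hunting for is the Laurent monomial map $\theta$ in \eqref{eq:abcdfrac}, whose exponent vectors are the rows of \eqref{eq:4by15}. Your guess is compatible with it: on $U$ one has $1-u_{10}=u_5u_8u_9u_{13}u_{14}$, so $u_{10}=a/(1+a)$. Because $\theta$ is monomial, $\theta(U\cap\R^{15}_{>0})\subseteq\R^4_{>0}$ is immediate, with no need for $u_i<1$.

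The gap is the step you yourself single out. You leave surjectivity onto $U\cap\R^{15}_{>0}$ contingent on a Gr\"obner-basis check whose outcome you do not know. Your fallback is a cell-structure theorem from \cite{CT} plus an open-and-closed argument that you do not supply. None of this is needed. The missing idea is that irreducibility of $U$ upgrades $\psi\circ\theta=\mathrm{id}$ from a dense set to every point where the composite is regular.

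Concretely, let $W\subset\C^4$ be the open set where $a,b,c,d,g_1,\ldots,g_{11}$ are all nonzero. Then $\psi$ is regular on $W\supseteq\R^4_{>0}$ and $\theta\circ\psi=\mathrm{id}_W$. The set $V=\{u\in U\cap(\C^*)^{15}:\theta(u)\in W\}$ is Zariski open in $U$ and contains $U\cap\R^{15}_{>0}$. It also contains $\psi(W)$, on which $\psi\circ\theta$ is the identity. By your birationality step, $\psi(W)$ is Zariski dense in the irreducible variety $U$, hence dense in $V$. The locus in $V$ where the morphism $\psi\circ\theta$ agrees with the identity is closed, so it is all of $V$. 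Thus every $u\in U\cap\R^{15}_{>0}$ equals $\psi(\theta(u))$ with $\theta(u)\in\R^4_{>0}$. This is what the paper's closing sentence, $\psi^{-1}(U\cap\R^{15}_{>0})=\R^4_{>0}$ via \eqref{eq:abcdfrac}, implicitly relies on.
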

  \begin{proof}
  We check that the following monomial map $\,{\rm im} \, \psi \rightarrow \mathbb{C}^4\,$ inverts $\,\psi$: 
 \begin{equation}
 \label{eq:abcdfrac}
  a \, = \, \frac{u_{10}}{u_5 u_8 u_9 u_{13} u_{14} }\,, \quad
        b\,=\,  \frac{u_9 u_{11}}{u_4 u_7 u_{12} u_{15}}\, ,\quad
        c \, =\, \frac{u_4 u_6 u_{14} u_{15}}{u_3 u_{13}}\, , \quad
        d \, = \,\frac{u_1 u_4 u_8 u_{12} u_{14}}{u_2}.
\end{equation}
Therefore, the Zariski closure of the image of $\psi$ is irreducible of dimension four. Since this image closure is contained in $U$ by Lemma \ref{lem:uparaE6} and $U$ is irreducible of dimension four by Lemma \ref{lem:15trinomials}, we conclude that $\psi: \mathbb{C}^4 \dashrightarrow U$ is indeed birational.
The $20$ maximal minors $p_{ijk}$
  and the conic constraint $q$ in (\ref{eq:coconic}) are rational 
  functions in the variables $a,b,c,d$ with positive coefficients. These rational functions
  exhibit the following $11$  irreducible factors:
 \begin{equation}
 \label{eq:g11}
 \begin{matrix}
  & g_1 = a+1, \quad g_2 = b+1, \quad g_3 = a b+b+1, \quad g_4 = c+1,\quad
   g_5 = b c+c+1,  \smallskip \\ & g_6 = d+1, \quad
     g_7 = a b d+b d+d+1,\quad
   g_8 = bcd+ b c+ bd + cd+ c + d + 1, \smallskip \\
&     g_9 \,=\,\, g_8 + abd, \, 
 \quad g_{10} \,=\,g_9 + abcd  , \quad g_{11} \,=\,  g_{10} + abc .
\end{matrix}
\end{equation}    
Every minor $p_{ijk}$ is a Laurent monomial in $a,b,c,d,g_1,\ldots,g_{11}$~and
$ - q  \, = \, \frac{a \,d\,g_1\, g_2 \,g_4}{b\, c \,g_3 \,g_{11}}$.
This shows that $\psi(\mathbb{R}^4_{>0}) \subseteq \mathbb{R}^{15}_{>0}$. Conversely, \eqref{eq:abcdfrac} shows that $\psi^{-1}(U \cap \mathbb{R}^{15}_{>0}) = \mathbb{R}^4_{>0}$.
  \end{proof}

The matrix $P$ in (\ref{eq:sixpoints7}) is very useful.
We took $\, a\,=\,b\,=\,c\,=\,1 \,$ and $\,d=2\,$ to define
 (\ref{eq:sixpoints}). The positive parametrization from Proposition \ref{prop:positiveparam} reveals our main object, the ${\rm E}_6$ pezzotope.

\begin{thm} \label{thm:pezzotopeispolytope} There exists a simple $4$-polytope, called the {\em ${\rm E}_6$ pezzotope},
whose normal fan is given by the simplicial complex $\Delta$. Hence the
 f-vector of the ${\rm E}_6$ pezzotope is $(45,90,60,15)$.
 \end{thm}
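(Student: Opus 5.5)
The plan is to realize $\Delta$ as the normal fan of the Newton polytope of a product of the positive polynomials appearing in the parametrization $\psi$ of Proposition~\ref{prop:positiveparam}, following the toric viewpoint of \cite{CT}. Consider the $11$ irreducible factors $g_1,\ldots,g_{11}$ in (\ref{eq:g11}), all of which have positive coefficients. Set
$$ P_{\rm E_6} \,\, = \,\, {\rm Newt}(g_1 g_2 \cdots g_{11}) \,\, = \,\, \sum_{i=1}^{11} {\rm Newt}(g_i) \,\subset\, \R^4 . $$
This is a Minkowski sum of lattice polytopes, hence a polytope. Its normal fan is the common refinement of the normal fans of the summands. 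The expectation is that this common refinement is the complete simplicial fan whose rays are $15$ vectors $v_1,\ldots,v_{15}\in \Z^4$ and whose maximal cones are spanned by the $45$ quadruples of $\Delta$.

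To identify the rays, I would use the monomial inverse map (\ref{eq:abcdfrac}) together with the fact that each $u_i$ is, up to a monomial in $a,b,c,d$, a Laurent monomial in the $g_j$. For a weight $w\in\R^4$, the tropicalization ${\rm trop}(u_i)(w)$ is a piecewise linear function equal to $\langle \text{exponent}, w\rangle$ plus a signed sum of the support functions $\min_{m\in {\rm Newt}(g_j)}\langle m,w\rangle$. The candidate ray $v_i$ is the unique primitive vector with ${\rm trop}(u_j)(v_i)=\delta_{ij}$, which is the "binary" property of the $u$-equations (\ref{eq:perfectuE6}). I would compute the $15$ vectors $v_i$ directly from this condition. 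Then I would check two things: that the $45$ cones ${\rm cone}(v_i,v_j,v_k,v_l)$ for the facets of $\Delta$ are unimodular or at least simplicial, and that they cover $\R^4$ with disjoint interiors. Covering can be checked by showing that a generic $w$ lies in exactly one cone, for instance via a volume or degree count on a sphere, or by checking that every triangle of $\Delta$ lies in exactly two tetrahedra (the complex is a pseudomanifold) together with one interior point test.

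Next I would show that the normal fan of $P_{\rm E_6}$ equals this fan. One route is to compute, for each facet $\tau$ of $\Delta$, the vertex of $P_{\rm E_6}$ minimizing a generic $w$ in the interior of ${\rm cone}(\tau)$. This means taking the sum over $j$ of the minimizing vertices of ${\rm Newt}(g_j)$. Then check that the normal cone at this vertex is exactly ${\rm cone}(\tau)$, meaning that the vertex stays the same throughout the open cone and changes across each wall. Since every ${\rm Newt}(g_j)$ is small, the $g_j$ having between $2$ and $11$ terms, this is a finite and mechanical computation.

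The main obstacle I expect is that the common refinement of the summand fans could be strictly finer than the fan of $\Delta$, with extra rays coming from some $g_j$ that do not correspond to any $s_i$. If that happens, the fix is to replace the plain product by a product $\prod g_j^{c_j}$ with suitable positive exponents, or better by $\prod_i u_i$-type expressions, since the fan of the tropical $u$-map is intrinsically $\Delta$. Alternatively, one can abandon the Newton polytope construction and verify directly that the fan is polytopal. To do this, exhibit a strictly convex piecewise linear function on the fan by solving a linear program for heights on the $15$ rays satisfying the $90$ wall-crossing (local convexity) inequalities, one for each interior triangle of $\Delta$. Once polytopality is established, the f-vector $(45,90,60,15)$ follows by duality from the face numbers of $\Delta$: it has $45$ tetrahedra, $90$ triangles, $60$ edges, and $15$ vertices.
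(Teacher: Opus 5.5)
Your proposal is correct and follows the paper's proof. The paper likewise defines the pezzotope as ${\rm New}(g_1 g_2 \cdots g_{11}) = \sum_{i=1}^{11} {\rm New}(g_i)$ and verifies by direct computation that this Minkowski sum is simple with normal fan $\Delta$; its facet normals are the columns of the matrix in \eqref{eq:4by15}, which are exactly the vectors $v_i$ singled out by your tropical condition. The plain product already works, so your fallbacks (weighted products or a separate convexity linear program) are not needed, and the f-vector follows by duality as you say.
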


\begin{proof}
The pezzotope is the Newton polytope of the product of the polynomials in \eqref{eq:g11}:
\begin{equation}
\label{eq:pezzotope}
 {\rm New}(g_1 g_2 \cdots g_{11}) \,\,= \,\,
{\rm New}(g_1) + {\rm New}(g_2) + \cdots + {\rm New}(g_{11}). 
\end{equation}
A computation verifies that this $4$-polytope is simple and it has the correct normal fan $\Delta$.
\end{proof}

The variety $U$ is a
{\em positive chart} of the smooth projective toric variety 
that is associated with the pezzotope.
This toric variety is constructed in \cite[Example 4.3]{CT}. The article \cite{CT}
offers a conceptual framework for $u$-equations, like
  (\ref{eq:ueqns5}) and (\ref{eq:perfectuE6}).
  Such equations are inspired by physics.
A key insight of \cite{CT} is that the $u_i$ are
 Cox coordinates in toric geometry \cite[Section 6.1]{MS}.
The facet normals of the pezzotope (\ref{eq:pezzotope})
are the columns of the $4 \times 15$ matrix
\setcounter{MaxMatrixCols}{15}
\begin{equation}
\label{eq:4by15}
\begin{bmatrix} 
\,\,0 & 0 & 0 &  0 & \!\! -1 & 0 & 0 & \!\! -1 & \!\! -1 & 1 & 0 & 0 & \!\! -1 & \!\! -1 &  0 \, \\
\,\,0 & 0 & 0 & \!\! -1 & 0 & 0& \!\! -1 & 0 &  1 &  0 &  1 & \!\! -1 & 0 &  0 & \!\! -1 \,\, \\
\,\,0 & 0 & \!\! -1 & 1 & 0 &  1 & 0 & 0 &  0 &  0 &  0 &  0 & \!\! -1 & 1 &  1 \,\\
\,\,1 &  \!\! -1 & 0 & 1 & 0 & 0 & 0  & 1 & 0  & 0 & 0 & 1 & 0 & 1 &  0 \, \\
\end{bmatrix}.
\end{equation}
The monomial map \eqref{eq:abcdfrac} is given by the four rows of this matrix. 
We substitute the monomials \eqref{eq:abcdfrac} into
the polynomial $g_i$ in  (\ref{eq:g11}).
This results in a rational function in $u = (u_1,\ldots,u_{15})$.
Let $g_i^h(u)$ denote the numerator of this rational function.
Replacing $g_i$ with $g_i^h(u)$ is homogenization in toric geometry:
we pass from affine coordinates  $a,b,c,d$ on the torus $(\C^*)^4$ to
 Cox coordinates on the smooth
toric variety of the pezzotope.

\begin{prop}
The polynomials 
$g_1^h(u) - 1,\ldots, \,g_{11}^h(u) - 1$ generate the
prime ideal in Lemma \ref{lem:15trinomials}.
Hence, the
associated irreducible variety $\,U$
is a complete intersection in~$\C^{15}$.
\end{prop}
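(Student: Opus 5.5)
Let $J$ denote the ideal generated by the eleven polynomials $g_1^h(u)-1,\ldots,g_{11}^h(u)-1$, and let $I$ be the prime ideal generated by the $15$ trinomials in (\ref{eq:perfectuE6}), which is prime of dimension $4$ by Lemma \ref{lem:15trinomials}. We show $I=J$. Since $J$ has $11 = 15-4$ generators and cuts out the $4$-dimensional variety $U$, $U$ is then a complete intersection.

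First we show $J \subseteq I$. For each $i$, we substitute the parametrization $\psi$ from Proposition \ref{prop:positiveparam} into $g_i^h(u)$. The monomial map (\ref{eq:abcdfrac}) inverts $\psi$, so $g_i(a,b,c,d)$ equals $g_i^h(u)$ divided by the denominator monomial $m_i(u)$ produced by the substitution. We check that $m_i(\psi(a,b,c,d))$ equals $g_i(a,b,c,d)$. This amounts to verifying that each $u_j$, written in the factored form from the proof of Proposition \ref{prop:positiveparam} as a Laurent monomial in $a,b,c,d,g_1,\ldots,g_{11}$, combines correctly. Concretely, $g_i^h \circ \psi$ is identically $1$. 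Since $\psi$ is dominant onto the irreducible variety $U$, each $g_i^h - 1$ vanishes on $U$, and primality of $I$ gives $g_i^h - 1 \in I$.

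Second we show $I \subseteq J$. We plan to produce each trinomial $u_j + \prod_{jk\notin G} u_k - 1$ explicitly as a polynomial combination of the $g_i^h - 1$. A structural guide is that $g_i$ is a sum of the monomials of New$(g_i)$, and in Cox coordinates each homogenized monomial is a product of $u_k$'s. The coordinates appearing in the $u$-equation for $u_j$ should correspond to a pair of ``extreme'' monomials. For the binomial-type $g_i$ (namely $g_1,g_2,g_4,g_6$), we expect $g_i^h - 1$ to be literally one of the trinomials. The others, such as $g_3 = ab+b+1$, should give trinomial relations after subtracting multiples of earlier ones, following the chain $g_8 \to g_9 \to g_{10} \to g_{11}$, where each step adds one monomial. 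In practice we would do this by a Gröbner basis computation: compute a Gröbner basis of $J$ and reduce the $15$ trinomials to zero. This is a finite symbolic check in the same spirit as Lemmas \ref{lem:15trinomials} and \ref{lem:uparaE6}.

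The main obstacle is the second inclusion, that is, showing that $J$ is already saturated and radical rather than merely having radical $I$. A fallback strategy is to show that $J$ has codimension $11$, which we would get from $V(J) \subseteq V(I)$ by the first step's argument in reverse, using the trinomials recovered up to monomial factors. Then $J$ is unmixed by Macaulay's theorem for complete intersections. It would then suffice to check that $J$ is generically reduced along $U$, via a Jacobian rank computation at one point such as $\psi(1,1,1,1)$, and that $J$ has no embedded components. Together these force $J = I$.
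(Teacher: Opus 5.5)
Your main argument is correct, but it is organized differently from the paper's.

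\textbf{How the paper argues.} The paper takes the torus statement from \cite[Corollary 3.10]{CT}: in the Cox-coordinate framework of positive charts, the $g_i^h(u)-1$ cut out $U\cap(\C^*)^{15}$. It then checks by computer algebra that the ideal $J$ they generate is prime. An irreducible $V(J)$ that agrees with $U$ on the nonempty torus part must equal $U$, so $J=I$.

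\textbf{How you argue.} You avoid the toric theory. Your inclusion $J\subseteq I$ rests on the eleven identities $g_i^h\circ\psi\equiv 1$. These are pure exponent bookkeeping, because each coordinate of $\psi$ is a Laurent monomial in the distinct irreducibles $a,b,c,d,g_1,\ldots,g_{11}$. Dominance of $\psi$ and primality of $I$ then finish that inclusion, and the reverse inclusion is an ideal-membership test.

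\textbf{What each route buys.} You inherit primality of $J$ from Lemma~\ref{lem:15trinomials}, and you replace a primality computation by a Gr\"obner reduction. The paper's route instead explains \emph{why} $g_i^h=1$ on $U$: the $u_i$ are Cox coordinates on the toric variety of the pezzotope.

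\textbf{Two small slips.}
\begin{enumerate}
\item Since $g_i=g_i^h/m_i$ on the torus and $g_i^h\circ\psi=1$, you need $m_i\circ\psi=1/g_i$, not $g_i$. For instance, $m_1=u_5u_8u_9u_{13}u_{14}$ equals $1/(a+1)$ on $U$.
\item Your heuristic about the binomial $g_i$ is only half right. $g_1^h-1$ and $g_6^h-1$ are the $u_{10}$- and $u_2$-trinomials. But $g_2^h-1=u_9u_{11}+u_4u_7u_{12}u_{15}-1$ and $g_4^h-1=u_3u_{13}+u_4u_6u_{14}u_{15}-1$ have no linear term, so they are not among the fifteen. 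They vanish on $U$ by three-term Pl\"ucker relations such as $p_{124}p_{235}=p_{123}p_{245}+p_{125}p_{234}$.
\end{enumerate}
Neither slip matters, since the actual verification is the Gr\"obner computation.

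\textbf{The fallback has a gap.} Recovering the trinomials only up to monomial factors gives $V(J)\cap(\C^*)^{15}\subseteq U$. It says nothing about components of $V(J)$ lying inside coordinate hyperplanes. Such a component has dimension at least $4$ by Krull, so neither $\mathrm{codim}\,J=11$ nor $V(J)=U$ follows. A Jacobian check at $\psi(1,1,1,1)$ cannot detect such a component either.

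You would additionally need $\dim\bigl(V(J)\cap\{u_k=0\}\bigr)\le 3$ for every $k$. Then every component meets the torus and hence equals $U$. Unmixedness and generic reducedness then give $J=I$, and your separate check for embedded components becomes redundant.
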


\begin{proof}
It follows from \cite[Corollary 3.10]{CT} that the polynomials $g_1^h(u) - 1,\ldots, \,g_{11}^h(u) - 1$ define the $U$-variety in the torus $(\mathbb{C}^*)^{15}$. The fact that the ideal is prime can be checked via computer algebra, and is expected to hold for general positive charts \cite[Conjecture 3.11]{CT}.  
\end{proof}

The intersection of our $4$-dimensional variety   with the nonnegative orthant is bounded:
 $$ U_{\geq 0} \,\, = \,\, U \, \cap \, \R^{15}_{\geq 0} \quad \subset \quad [0,1]^{15}.$$
It is stratified by the relatively open faces
 of $\R^{15}_{\geq 0}$. The strata are the faces of the pezzotope.

\begin{cor}  \label{cor:curvypezzo}
The semialgebraic set $\,U_{\geq 0}$ is a curvy realization of the ${\rm E}_6$ pezzotope.
\end{cor}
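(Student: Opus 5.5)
The plan is to prove that the semialgebraic set $U_{\geq 0}$ is homeomorphic to the pezzotope by a homeomorphism that respects the face stratifications. The natural tool is the toric moment map, combined with the positive parametrization of Proposition~\ref{prop:positiveparam}.

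First I would check the interior. By Proposition~\ref{prop:positiveparam}, $\psi$ maps $\R^4_{>0}$ bijectively onto $U\cap\R^{15}_{>0}$. Its inverse is the monomial map \eqref{eq:abcdfrac}, given by the rows of \eqref{eq:4by15}, so $\psi$ is a homeomorphism. On the toric side, the positive part of the torus of the toric variety $X_{\rm P}$ attached to the pezzotope ${\rm P}$ from Theorem~\ref{thm:pezzotopeispolytope} is $\R^4_{>0}$. The algebraic moment map of ${\rm P}$ sends $\R^4_{>0}$ homeomorphically onto the interior of ${\rm P}$ (Birch's theorem), and it extends to a homeomorphism from the nonnegative part $(X_{\rm P})_{\geq 0}$ onto ${\rm P}$ that maps torus orbits onto the relative interiors of faces.

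Second, I would identify $(X_{\rm P})_{\geq 0}$ with $U_{\geq 0}$. Following \cite{CT}, the $u_i$ are Cox coordinates. The key claim is that the map $\psi$, written in Cox coordinates, extends to a continuous bijection $(X_{\rm P})_{\geq 0}\to U_{\geq 0}$. For a face $F$ of ${\rm P}$, the torus orbit $O_F$ should go to the stratum where exactly the $u_i$ indexed by the facets containing $F$ vanish. Concretely, the stratum $\{u_i=0 \text{ for } i\in\sigma,\ u_j>0 \text{ otherwise}\}$ of $U_{\geq 0}$ is cut out by the $u$-equations \eqref{eq:perfectuE6}. If $u_i=0$, then its equation forces $\prod_{ij\notin G}u_j=1$, so every $u_j$ with $j$ not adjacent to $i$ is nonzero. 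Hence $\sigma$ must be a clique of $G$, that is, a face of $\Delta$. On such a stratum, the $u_j$ for $j$ adjacent to members of $\sigma$ must equal $1$, and the remaining $u$-equations should reproduce the $u$-equations of the corresponding face. That face is a product of lower-dimensional positive charts (associahedra and cubes, as in Figure~\ref{fig:pezzotope}). Inductively, each stratum is then an open cell of dimension $4-|\sigma|$, nonempty exactly when $\sigma\in\Delta$. Alternatively, I would use that $g_k^h(u)-1$ cut out $U$ and that each $g_k$ has positive coefficients. Then \cite[Corollary~3.10]{CT}, together with positivity, shows that $U_{\geq 0}$ is the closure of $\psi(\R^4_{>0})$ in $\R^{15}$ and that the closure of each orbit in $(X_{\rm P})_{\geq 0}$ maps onto the corresponding stratum.

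Third, compactness finishes the argument. $U_{\geq 0}\subset[0,1]^{15}$ is compact and $(X_{\rm P})_{\geq 0}$ is compact Hausdorff, so a continuous bijection between them is a homeomorphism. Composing with the moment map gives a stratification-preserving homeomorphism $U_{\geq 0}\cong{\rm P}$. This is exactly the meaning of a curvy realization.

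The main obstacle is the boundary analysis in the second step. One must show that the extension of $\psi$ is well defined and injective on the boundary, and that no extra boundary points of $U_{\geq 0}$ arise that are not limits of interior points. I would try first to reduce this to the surjectivity and injectivity of the positive Cox-coordinate map, using the general positive-chart results of \cite{CT}. If those do not apply verbatim, I would fall back on the explicit face-by-face verification through the $u$-equation restrictions described above.
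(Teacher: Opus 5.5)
Your plan follows the paper's argument: $U$ is the positive affine chart of the smooth toric variety of the pezzotope, and the moment map identifies the strata of $U_{\geq 0}$ with the open faces via \cite[Theorem~5.1]{CT}. Your $u$-equation stratum analysis is the same alternative the paper mentions, and your Birch and compactness details only make the argument explicit. There is one slip in that fallback: if $u_i=0$, the equations force $u_j=1$ for the $j$ \emph{not} adjacent to $i$ in $G$ (for example $u_2=u_5=u_7=u_{13}=u_{15}=1$ on $u_1=0$, since $u_1$ occurs in each of their products), whereas the neighbours of $i$ stay free and carry the facet's own $u$-equations.
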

\begin{proof}
The variety $U$ is an affine subset of a toric variety. The semialgebraic set $U_{\geq 0}$ is identified with the ${\rm E}_6$ pezzotope via the toric moment map. This map identifies the strata of $U_{\geq 0}$ diffeomorphically with the open faces of the pezzotope \cite[Theorem 5.1]{CT}. The structure of these strata can alternatively be understood from the structure of the $u$-equations (\ref{eq:perfectuE6}). 
\end{proof}

The connection with stringy integrals is seen as follows: the function \eqref{eq:E6amplitude} is the limit
\begin{equation} \label{eq:fieldlimitpezzo} \lim_{\alpha' \rightarrow 0} \, (\alpha')^4 \, {\cal I}_{\sigma} (\alpha'  s)  \,\,\, = \,\,\,
 \lim_{\alpha' \rightarrow 0}\, (\alpha')^4 \, \int_{U_{\geq 0}} \! u_1^{\alpha'  s_1} u_2^{\alpha' s_2}  \cdots \, u_{15}^{ \alpha'  s_{15}} \, \, \omega(U_{\geq 0}), 
\end{equation}
where $\omega(U_{\geq 0})$ is the $4$-form ${\rm dlog} \, a \wedge {\rm dlog} \, b  \wedge {\rm dlog} \, c  \wedge {\rm dlog} \, d $, with $a,b,c,d$ as in \eqref{eq:abcdfrac}. Here $\sigma$ is the natural embedding of the pezzotope in its smooth toric variety $X$, and $Y$ consists of the toric boundary and the hypersurfaces \eqref{eq:g11}. 
This is analogous to Examples \ref{ex:quadint} and \ref{ex:pentint}.

\smallskip

Six labeled points in $\mathbb{P}^2$ in general position give a cubic surface as explained in Section~\ref{sec2}. By Proposition \ref{prop:positiveparam}, each point in the curvy pezzotope $U_{> 0}$ corresponds to such a point configuration by substituting the corresponding positive values of $a,b,c,d$ into the matrix $P$ in \eqref{eq:sixpoints7}. Let us take a closer look at the cornucopia of cubic surfaces arising in this way.
Consider the partition of the orthant $\R^4_{>0}$ given by the signs
of the following ten polynomials:
\begin{equation}
\label{eq:10eckardt}
 \begin{footnotesize} 
  \begin{matrix}
ad-1, abd+bd-a+d, \,bcd-ab+bc+bd+cd+c+d+1,\, ab^2d+b^2d-bcd-bc-cd-c-d-1, \\
ab^2c-abcd{+}b^2c-abd-bcd-bd-cd-c-d-1, \,\,
a^2b^2cd{+}ab^2cd{-}abd{-}bcd{-}bc{-}bd{-}cd{-}c{-}d{-}1,\\
abcd^2{-}bc^2d{+}abd^2{+}bcd^2{-}bc^2{-}c^2d{+}bd^2{+}cd^2{-}c^2{+}d^2{-}c{+}d, \\
abc^2d{+}abc^2{+}ac^2d{+}ac^2{-}abd{+}acd{-}bcd{+}ac{-}bc{-}bd{-}cd{-}c{-}d{-}1, \\
ab^2c^2d^2{+}ab^2c^2d{+}ab^2cd^2{+}abc^2d^2{+}b^2c^2d^2{+}abc^2d{+}b^2c^2d  {+}abcd^2{+}b^2cd^2 
  {+}2bc^2d^2{+}2bc^2d{+}2bcd^2  \\ \,\,+ \,c^2d^2 -abd  {+}c^2d{+}cd^2{-}bc{-}bd{-}c{-}d{-}1\,,\qquad
ab^2c^2d{-}a^2bcd{+}ab^2c^2{+}ab^2cd{+}abc^2d {+}b^2c^2d{-}a^2bd \\ \qquad \qquad \qquad \,+ \,abc^2
  {+}b^2c^2{+}b^2cd{+}2bc^2d{-}abd{-}acd{+}2bc^2{+}2bcd{+}c^2d{-}ac{-}ad{+}bc{+}c^2{+}cd{-}a{+}c.
  \end{matrix}
  \end{footnotesize}
\end{equation}
Their product vanishes precisely when one of the
$10$ triangles in~(\ref{eq:10triangles}) becomes an Eckardt~point. 

\begin{thm}  \label{thm:clebsch}
The ten hypersurfaces in (\ref{eq:10eckardt}) divide the
orthant $\R^4_{>0}$ into $120$ chambers. Within each chamber,
the cubic surface has $130$ regions, with precisely the
$10$ triangles~in~(\ref{eq:10triangles}).
On each hypersurface, one of the triangles becomes an Eckardt point. 
The orthant contains a unique {\em Clebsch cubic}, where
all ten question marks in Figure \ref{fig:endler2} are Eckardt points,~namely 
$$ (a,b,c,d) \in \R^4_{>0} \quad \hbox{with coordinates} \quad
d = \frac{3-\sqrt{5}}{2},\,\,
a = 3-d, \,
b = 2-d,\,
c = 1-d.
$$
\end{thm}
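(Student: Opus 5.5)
The proof splits into three parts: the chamber count, the combinatorics inside a chamber, and the Clebsch point. Throughout, the parametrization \eqref{eq:sixpoints7} is the tool, together with Proposition~\ref{prop:positiveparam}.

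\emph{Eckardt loci.} For each triangle $T$ in \eqref{eq:10triangles}, the three lines of $T$ are concurrent exactly when a $3\times 3$ determinant vanishes, written in the plane model with $P$ as in \eqref{eq:sixpoints7}. Examples are $\det(F_{12},F_{36},F_{45})$ for three lines, or the condition that $E_2$, i.e.\ the point $(0:1:0)$, has its tangent direction to $G_1$ along $F_{12}$. Each such condition is a rational function in $a,b,c,d$. I will factor it and discard the factors among $a,b,c,d,g_1,\dots,g_{11}$, which are positive on $\R^4_{>0}$. What remains should be exactly one polynomial from \eqref{eq:10eckardt}. This is a symbolic computation, and it proves the last claim of the theorem.

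\emph{Combinatorics inside a chamber.} On $\R^4_{>0}$ every $p_{ijk}$ and $q$ is nonzero by Proposition~\ref{prop:positiveparam}, so the cubic is smooth and has $27$ real lines. The only way the arrangement $Y_\R$ can change combinatorially along a path in $\R^4_{>0}$ is by three lines becoming concurrent. Here I need an argument that the only concurrences which can occur for configurations in $\R^4_{>0}$ are the ten triangles in \eqref{eq:10triangles}. For the other $35$ tritangent planes, I will check that the corresponding factor has constant sign (no real positive zero) on the orthant. This is the same factoring computation: those factors should all be monomials in $a,b,c,d,g_i$. Granting this, the circular orders in Table~\ref{tab:circular} are locally constant on the complement of the ten hypersurfaces. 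So on each chamber the graph ${\cal G}(X)$ is constant, and the triangle that appears at each question mark in Figure~\ref{fig:endler2} is fixed. The labels stay the same, and the exposed triangles remain those of \eqref{eq:10triangles}. Only the orientation (up versus down) of a triangle may change across a wall, as the triangle shrinks to an Eckardt point and reappears flipped. The count of $130$ regions then follows as in Section~\ref{sec3}: each hexagon splits into $3+3+1$ pieces.

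\emph{Chamber count and the Clebsch point.} To count the $120$ chambers I would first observe that each sign vector of the ten polynomials records the up/down choices, giving at most $2^{10}$ possibilities. I would then determine the realizable sign vectors computationally, by sampling points and verifying connectivity, or by a cylindrical algebraic decomposition of the complement. I expect this count to be the main obstacle. A cleaner route would be to identify the chambers with the $120$ elements of the stabilizer of order $120$ mentioned in Section~\ref{sec3}, which acts on the pezzotope. One would show that the ten hypersurfaces are the mirrors of an $S_5$-like reflection arrangement around the Clebsch point, and that each chamber is a cone over a face structure that meets the Clebsch point. Failing that, I would rely on exact computation. For the Clebsch cubic, I will solve the ten equations simultaneously by a Gröbner basis restricted to the positive orthant. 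Substituting $d=(3-\sqrt5)/2$ and $a=3-d$, $b=2-d$, $c=1-d$ verifies that it is a solution, and the elimination ideal in $d$ shows that it is the unique positive solution.
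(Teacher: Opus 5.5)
Your treatment of the ten walls, of the combinatorics inside a chamber, and of the Clebsch point matches what the paper calls ``direct computation.'' The chamber count, however, has a real gap in the upper bound. Exhibiting $120$ distinct sign vectors of the polynomials in \eqref{eq:10eckardt} only shows that there are \emph{at least} $120$ chambers. To get \emph{exactly} $120$ you must also show two things: no other sign vector occurs, and every realized sign class is connected. You leave this to an unspecified connectivity check or to a cylindrical algebraic decomposition of ten polynomials of degree up to seven in four variables. You flag this yourself as the main obstacle and do not carry it out, and sampling can never settle it.

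The paper replaces this computation with a symmetry count. $W({\rm E}_6)$ preserves the Eckardt divisor and permutes the $432$ pezzotopes transitively (Lemma~\ref{lem:Weylaction}, Corollary~\ref{cor:432}). So every pezzotope contains the same number $N$ of chambers. Klein's theorem that the quotient of ${\cal Y}(3,6)_\R$ by $W({\rm E}_6)$ is connected bounds the total number of chambers by $|W({\rm E}_6)|=51840$. Hence $432N\le 51840$, i.e.\ $N\le 120$, and the sign vectors give equality.

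Your ``cleaner route'' through the stabilizer of order $120$ uses the same mechanism. As written, though, it demands too much (a reflection arrangement whose chambers are cones at the Clebsch point) and proves nothing. It suffices that ${\rm Stab}(R)$ acts transitively on the chambers of the pezzotope $R$, which can be argued as follows.
\begin{itemize}
\item A surface on one of the ten walls has an Eckardt involution.
\item The element of $W({\rm E}_6)$ it induces fixes that point of $R$, so it lies in ${\rm Stab}(R)$.
\item Near that point its fixed locus is the three-dimensional wall, so it acts as a reflection there and swaps the two adjacent chambers.
\item The open pezzotope is connected, and walls can be crossed one at a time at generic points.
\end{itemize}
Hence all chambers form one orbit, giving $N\le 120$.

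A smaller correction concerns the $35$ tritangent planes not in \eqref{eq:10triangles}. Their concurrency condition does \emph{not} reduce to a monomial in $a,b,c,d,g_1,\ldots,g_{11}$. The reason is that Eckardt components are divisors inside ${\cal Y}(3,6)$, not boundary components. For instance, for $F_{12}F_{34}F_{56}$ the condition reduces to $b\,g_1g_{10}+g_{11}=0$. What you actually need is a certificate that these factors do not vanish on $\R^4_{>0}$; in this example all coefficients are positive.
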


\begin{proof}
For $(a,b,c,d) \in \R^4_{>0}$,
the polynomials in (\ref{eq:10eckardt}) attain $120$
distinct sign vectors in $\{-1,+1\}^{10}$. 
Hence the division of $\R^4_{>0}$ has at least $120$ chambers. By Corollary \ref{cor:432}, this implies that the real moduli space ${\cal Y}(3,6)_\R$ has at least $432 \cdot 120 = 51840$ chambers. This is also an upper bound by Felix Klein's topological results in \cite{Klein}: the quotient of ${\cal Y}(3,6)_\R$ by the Weyl group $W({\rm E}_6)$ is connected.
The other statements are shown by direct computation.
\end{proof}

\section{Moduli of Cubic Surfaces}
\label{sec9}

We now explain the  moduli space $\mathcal{Y}(3,6)$
of  configurations of six points in general position in $\PP^2$.
Formally,  $\mathcal{Y}(3,6)$ is defined as the quotient ${\rm Gr}(3,6)^\circ/(\C^*)^6$, where
${\rm Gr}(3,6)^\circ$ denotes the open subset of the  complex Grassmannian  ${\rm Gr}(3,6)$
where all $20$ Pl\"ucker coordinates are nonzero
and additionally the polynomial $q$ in (\ref{eq:coconic})
is nonzero as well. This quotient is a very affine variety of dimension $4$.
It is embedded in the torus $(\C^*)^4$ by taking all matrices
\begin{equation} \label{eq:sixpoints8} \tilde{P} \, = \, \begin{bmatrix}
1 & 0 & 0 & 1 & x & y \\ 0 & 1 & 0 & -1 & -z & -w \\ 0 & 0 & 1 & 1 & 1 & 1
\end{bmatrix}
\end{equation}
whose $3 \times 3$ minors are nonzero
 and where $q = wx(y+z-1) + yz(1-w-x)$ is nonzero too.

 Six of the $20$ minors of $\tilde P$ are equal to one, 
 and four of them are $x,y,z,w$. Hence only 
 $10$ of the minors are needed for our arrangement.
We conclude that   $\mathcal{Y}(3,6)$ is the complement of $11 = 10+1$ hypersurfaces
in $(\C^*)^4$. 
The real moduli space ${\cal Y}(3,6)_\R$ consists of matrices \eqref{eq:sixpoints8} in which $\tilde p = (x,y,z,w) \in \mathbb{R}^4$, and $\tilde p$ avoids these $11$ hypersurfaces. This space is disconnected, and its connected components will later serve as \emph{regions} in a positive arrangement $(X,Y)$ compactifying ${\cal Y}(3,6)$. To count regions, the action of the Weyl group $W(\rm E_6)$ is essential. 

The Weyl group $W(\rm E_6)$ is generated by simple transpositions $g_{i,j} \in S_6$ and the Cremona transformation $g_{C}$. These act on $6$-point configurations via birational transformations in $(x,y,z,w)$. The generator $g_{i,j}$ swaps columns $i$ and $j$. For instance, for $(i,j) = (2,5)$, 
\begin{equation} \label{eq:g25}  \begin{bmatrix}
\, 1 & x & 0 & 1 & 0 & y \, \smallskip \\
\, 0 & \! -z & 0 & -1 & 1 &\! -w \,\smallskip \\
\, 0 & 1 & 1 & 1 & 0 & 1\, \\
\end{bmatrix} \, \sim \, \begin{bmatrix}
\, 1 & 0 & 0 & 1 & \frac{-x z + x}{x - z} & \frac{x z w - x w - y z^2 + y z}{x z - x w - z^2 + z w} \, \smallskip \\
\, 0 & 1 & 0 & -1 & z - 1 & \frac{-z w + w}{z - w} \, \smallskip \\
\, 0 & 0 & 1 & 1 & 1 & 1 \,
\end{bmatrix}
         \end{equation}
gives $g_{2,5} \cdot (x,y,z,w) = (\frac{-xz+x}{x-z},\ldots, \frac{zw-w}{z-w})$. The 
Cremona transformation $g_C$ is centered at the points $1$, $2$ and $3$. These points are mapped to
 $(1:0:0), (0:1:0), (0:0:1)$.
 For the other points we apply $(x_0:x_1:x_2) \mapsto (\ell_{12}\ell_{13}: \ell_{12}\ell_{23}: \ell_{13}\ell_{23})$, where $\ell_{ij}(x_0,x_1,x_2)$ is a linear form vanishing at points $i$ and $j$. In our coordinates, we have $g_C \cdot (x,y,z,w) = (x^{-1},y^{-1},z^{-1},w^{-1})$. 

A convenient way to see the action of $W({\rm E}_6)$ 
on $\mathcal{Y}(3,6)$ is to  use the following matrix:
  \begin{equation}\label{eq:dimatrix}
                P \,\, = \,\,
                \begin{bmatrix}
                        1 & 1 & 1 & 1 & 1& 1\\
                        d_1  & d_2 & d_3 & d_4 & d_5 & d_6 \\
                        d_1^3 & d_2^3 & d_3^3 & d_4^3 &d_5^3 & d_6^3\\
                \end{bmatrix} .
        \end{equation}
Every complex $3 \times 6$ matrix $\tilde P$ can be written in this form after row operations and column scalings. The $3 \times 3$ minors of $P$ and the coconic condition decompose into linear factors:
$$ \, p_{ijk}  \,=\, (d_i-d_j)(d_i-d_k)(d_j-d_k) (d_i+d_j+d_k) \quad {\rm and}
 \quad q  \,=\,  (d_1+d_2+d_3+d_4+d_5+d_6) \!\! \prod_{1 \leq i < j \leq 6}\!\!\! (d_i-d_j). $$
The $36$ linear forms in these factorizations are the elements in the
{\em root system} of type ${\rm E}_6$. They define $36$ hyperplanes
through the origin in $\R^6$. This is the ${\rm E}_6$
{\em reflection arrangement}. Each of its regions is labeled by an element of the group $W({\rm E}_6)$, so there are
$51840$ regions. The Weyl group $W({\rm E}_6)$ acts freely and transitively on these regions by linear reflections. 

\begin{example}
Each transposition $g_{i,j} \in W(\rm E_6)$ acts on (\ref{eq:dimatrix}) by swapping columns.
It is instructive to work out the Cremona transformation $g_C$. This sends the matrix \eqref{eq:dimatrix} to 
\begin{equation} \label{eq:cremona2} \begin{bmatrix}
1 & 0 & 0 & f_1(d_4) & f_1(d_5) & f_1(d_6) \\ 
0 & 1 & 0 & f_2(d_4) & f_2(d_5) & f_2(d_6) \\ 
0 & 0 & 1 & f_3(d_4) & f_3(d_5) & f_3(d_6)
\end{bmatrix} \, \sim \,  \begin{bmatrix}
1 & 1 & 1 & 1 & 1 & 1\\
v_1 & v_2&v_3 & v_4 & v_5 & v_6 \\ 
v_1^3 & v_2^3 & v_3^3 &v_4^3 & v_5^3 & v_6^3 
\end{bmatrix}\end{equation}
where $f_i(d) = (d-d_i)(d+d_i+d_j)(d+d_i+d_k)$ for $\{i,j,k\} = \{1,2,3\}$, $v_i = d_i-\frac{2}{3}(d_1+d_2+d_3)$ for $i = 1, 2, 3$ and $v_i = d_i + \frac{1}{3}(d_1+d_2+d_3)$ for $i = 4, 5, 6$. The curve $(f_1:f_2:f_3)$ is a cuspidal cubic. It takes the form $(1:v:v^3)$ after applying the coordinate~change suggested in the first three columns of the righthand side. The linear map $(d_1, \ldots, d_6) \mapsto (v_1, \ldots, v_6)$ is the reflection at the hyperplane $d_1+d_2+d_3$, defined by using the appropriate inner product. 
\end{example}

Let ${\cal R}$ be the set of connected components of ${\cal Y}(3,6)_\R$, and let ${\rm comp}: {\cal Y}(3,6)_\R \rightarrow {\cal R}$ be the map that sends a point $\tilde p \in {\cal Y}(3,6)_\R$ to its connected component.
In symbols, ${\rm comp}(\tilde p) \ni \tilde p$. 

\begin{lem} \label{lem:Weylaction}
The Weyl group $W({\rm E}_6)$ acts transitively on ${\cal R}$ by $g \cdot R = {\rm comp}(g \cdot \tilde p)$,
where $R \in {\cal R}$ and $\tilde p \in R$. 
Each stabilizer group ${\rm Stab}(R) = \{ g \in W({\rm E_6}) \, : \, g \cdot R = R \}$ has order $120$.
\end{lem}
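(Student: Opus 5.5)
Our plan is to first check that the rule $g \cdot R = {\rm comp}(g\cdot \tilde p)$ is a well-defined action, then get transitivity from the $d$-coordinates in (\ref{eq:dimatrix}), and finally obtain $|{\rm Stab}(R)| = 51840/|{\cal R}|$ by counting components.

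\emph{Well-definedness.} Each generator $g_{i,j}$ and $g_C$ is a birational map of $(\C^*)^4$ with real coefficients. It restricts to a biregular automorphism of ${\cal Y}(3,6)$, because it permutes the $11$ hypersurfaces (the minors and $q$) up to monomial factors. Hence it maps ${\cal Y}(3,6)_\R$ homeomorphically onto itself and permutes connected components. The image ${\rm comp}(g\cdot\tilde p)$ is therefore independent of the choice of $\tilde p \in R$, and compatibility with composition is automatic.

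\emph{Transitivity.} We use the matrix (\ref{eq:dimatrix}). We first show that every real point of ${\cal Y}(3,6)_\R$ can be written with real $d_i$. The twisted cuspidal cubic $(1:d:d^3)$ through six given points is determined by the configuration. For a real configuration, uniqueness up to the relevant normalization forces complex conjugation to preserve the representation, so we may choose the $d_i$ real; this is the step we will verify most carefully. We then get a surjective real-analytic map from the complement of the ${\rm E}_6$ reflection arrangement in $\R^6$, modulo scaling and translation along the kernel, onto ${\cal Y}(3,6)_\R$. Each Weyl chamber is connected, so its image lies in a single component. Since $W({\rm E}_6)$ acts transitively on the $51840$ chambers and the parametrization is equivariant, as shown for $g_C$ in (\ref{eq:cremona2}), every component is the $W({\rm E}_6)$-image of a fixed one.

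\emph{Stabilizer order.} It now suffices to show $|{\cal R}| = 432$. We expect this count to be the main obstacle. The lower bound comes from distinguishing components by discrete invariants: the sign vector of the $20$ minors and $q$ modulo the torus and row actions. Equivalently, and more conceptually, we use the set of $10$ exposed triangles among the $45$ tritangent triangles of $\mathcal{S}^{10}_{27}$, which is locally constant on ${\cal Y}(3,6)_\R$. By Section~\ref{sec3} this configuration has stabilizer of order $120$, giving $432$ distinct configurations, so $|{\cal R}|\ge 432$. For the upper bound we would show that the stabilizer of the exposed-triangle configuration of the component $R_0$ containing $U_{>0}\cong\R^4_{>0}$ (Proposition \ref{prop:positiveparam}) maps $R_0$ to itself, and that $R_0$ is exactly the image of $\R^4_{>0}$. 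The second claim follows by checking that all $11$ factors in (\ref{eq:g11}) are positive and that the boundary of the orthant lies on the discriminant. For the first, we check that a set of $120$ chambers, namely those of Theorem~\ref{thm:clebsch}, maps into $R_0$ and that these chambers are permuted by a subgroup of order $120$ (isomorphic to $S_5$). Orbit--stabilizer then gives $|{\rm Stab}(R)|=120$ for all $R$ by transitivity. As a fallback for the hardest part, we would verify by direct computation that the $120$ generators' images of a sample point of $\R^4_{>0}$ again lie in $\R^4_{>0}$ after applying (\ref{eq:abcdfrac}).
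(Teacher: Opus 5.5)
The overall architecture (a well-defined action, transitivity via the ${\rm E}_6$ chambers, then orbit--stabilizer) matches the paper's. The stabilizer step, however, has a genuine gap: the order-$120$ claim you borrow from Section~\ref{sec3} does not hold for the \emph{set} of ten triangles.

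\textbf{The triangle invariant is too coarse.} Write the double-six complementary to the $15$ triangle lines as $a_1,\ldots,a_6$ over $b_1,\ldots,b_6$. Let $\tau\in W({\rm E}_6)$ be its switch $a_i\leftrightarrow b_i$; for the standard double-six this is the reflection in $2H-\sum_k E_k$. Each of the $15$ triangle lines is the unique line meeting $a_i,a_j,b_i,b_j$ for some pair $i,j$. So $\tau$ fixes all $15$ of them, and hence fixes every triangle configuration. The configuration stabilizer therefore has order $240$, not $120$, and your invariant yields only $|{\cal R}|\geq 216$.

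\textbf{Your upper-bound plan fails for the same reason.} Showing that the configuration stabilizer maps $R_0$ to itself is impossible, because $\tau$ does not. In the coordinates (\ref{eq:dimatrix}), the switch of the standard double-six is $d_i\mapsto d_i-\frac{1}{3}\sum_k d_k$. This fixes every $d_i-d_j$ and negates $\sum_k d_k$, hence negates $q$. But ${\rm sign}(q)$ is constant on the regions of ${\cal Y}(3,6)_\R$, since $q$ has degree two in each column and degree four in the Pl\"ucker coordinates. All switches are conjugate, so no switch stabilizes any region. Thus $R$ and $\tau R$ are distinct regions with the same ten triangles.

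\textbf{How to repair your route.} ${\rm Stab}(R_0)$ is a subgroup of that order-$240$ group that does not contain $\tau$, so $|{\rm Stab}(R_0)|\leq 120$. Conversely, the real Clebsch point of Theorem~\ref{thm:clebsch} lies in $R_0$. It is fixed by the image of its automorphism group $S_5$ in $W({\rm E}_6)$, so these $120$ elements stabilize $R_0$. Do not use the $120$ Eckardt chambers of that theorem instead: the paper derives their count from Corollary~\ref{cor:432}, i.e.\ from this lemma, so the argument would be circular, and they are not Weyl chambers anyway. The paper itself simply computes ${\rm Stab}(R)\cong S_5$ directly.

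\textbf{The realness of the $d_i$.} Your argument here rests on a false premise: the cuspidal cubic through six points is not unique. By Lemma~\ref{lem:P5toC15}, (\ref{eq:dimatrix}) gives a map $\PP^5\dashrightarrow{\cal Y}(3,6)$ with one-dimensional fibres. A conjugation-stable real curve need not have real points, so Galois descent proves nothing. The paper merely asserts this surjectivity. Transitivity actually needs only that each region contains the image of some Weyl chamber. That can be certified by the sign-pattern comparison the paper uses as a cross-check after Corollary~\ref{cor:432}.
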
 

\begin{proof}
The action $g \cdot R = {\rm comp}(g \cdot \tilde p)$ is well-defined:
 if $\tilde p_1$ and $\tilde p_2$ are in the same region $R$, then, by continuity,
there is a path $\gamma$ from $\tilde p_1$ to $\tilde p_2$ in $R$ whose image $g \cdot \gamma$ is in  ${\rm comp}(g \cdot \tilde p_1)$. To see that the action is transitive, note that the complement in $\mathbb{R}^6$ of the $\rm E_6$ reflection arrangement maps surjectively to ${\cal Y}(3,6)_\R$ via \eqref{eq:dimatrix}. The Weyl group acts transitively on the chambers of that  arrangement, and each chamber maps into a region of ${\cal Y}(3,6)$. 
To show $|{\rm Stab}(R)| = 120$, we compute the stabilizer of any point $\tilde p \in {\cal Y}(3,6)$. 
We see that it is $ S_5$.
\end{proof}

\begin{cor} \label{cor:432}
The open real moduli space $\mathcal{Y}(3,6)_\R$ has $|{\cal R}| = 432$ connected components.
\end{cor}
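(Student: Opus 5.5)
The count follows from Lemma \ref{lem:Weylaction} by the orbit--stabilizer theorem. That lemma gives two facts. First, $W({\rm E}_6)$ acts transitively on the set ${\cal R}$ of connected components of $\mathcal{Y}(3,6)_\R$. Second, the stabilizer of each component has order $120$. Since the action is transitive, ${\cal R}$ is a single orbit, so
$$|{\cal R}| \,=\, \frac{|W({\rm E}_6)|}{|{\rm Stab}(R)|} \,=\, \frac{51840}{120} \,=\, 432 .$$

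The substantive input is the stabilizer computation, and I would make it more robust than a pointwise check. The plan is to identify ${\rm Stab}(R)$ with the copy of $S_5$ inside $W({\rm E}_6)$ that fixes the configuration of $10$ exposed triangles (\ref{eq:10triangles}). That subgroup also has order $120$, which matches the count of $432$ configurations of exposed triangles in Section~\ref{sec3}.

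The main obstacle is to show that an element fixing a component really fixes it, and that nothing larger than this $S_5$ does. I would argue in two directions.

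\textbf{Upper bound on the stabilizer.} The combinatorial type of the $130$ regions, and in particular the set of $10$ exposed triangles, is locally constant on $\mathcal{Y}(3,6)_\R$ away from Eckardt loci. It is therefore constant on each component: a component is connected and the Eckardt hypersurfaces have codimension one. So any $g$ fixing $R$ must preserve the exposed-triangle configuration, which bounds $|{\rm Stab}(R)|$ by $120$.

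\textbf{Lower bound on the stabilizer.} Conversely, the $S_5$ maps the positive orthant chart of Proposition~\ref{prop:positiveparam} to itself, which one can check on generators via the birational formulas such as (\ref{eq:g25}) and $g_C$. Hence this $S_5$ stabilizes the component containing $\psi(\R^4_{>0})$. By transitivity, the same holds for every component.

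Combining the two bounds gives $|{\rm Stab}(R)| = 120$, and hence $|{\cal R}| = 432$.
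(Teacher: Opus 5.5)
Your first paragraph is exactly the paper's proof: Corollary~\ref{cor:432} follows from Lemma~\ref{lem:Weylaction} by orbit--stabilizer, and nothing more is needed. The extra argument you give for $|{\rm Stab}(R)|=120$, however, has a genuine gap, and its upper bound is off by a factor of two.

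\textbf{Constancy on components.} The Eckardt hypersurfaces do disconnect each component. By Theorem~\ref{thm:clebsch}, the positive pezzotope is cut into $120$ Eckardt chambers. Across a wall the quadrilaterals and pentagons change, because an up- versus down-triangle decides which three of the six quadrilaterals around a question mark become pentagons. So the combinatorial type of the $130$ regions is not constant on $R$. What does persist is the set of exposed triangles, for a different reason: at the wall the triangle shrinks to an Eckardt point and reappears bounded by the same three lines.

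\textbf{The stabilizer bound.} More seriously, the stabilizer of that triangle set has order $240$, not $120$. The ten triangles \eqref{eq:10triangles} use exactly the $15$ lines off the double-six \eqref{eq:doublesix}. The involution $s$ swapping the two rows of a double-six is a reflection in $W({\rm E}_6)$. For \eqref{eq:doublesix2} it is the reflection in $2H-\sum_i E_i$, which sends $E_i\mapsto G_i$ and fixes every $F_{ij}$. Hence $s$ fixes each of the $15$ remaining lines, and therefore each triangle. It also commutes with the $S_5$ of column permutations of the double-six that preserve the complementary $1$-factorization. So ``$g$ preserves the triangle configuration'' only yields $|{\rm Stab}(R)|\le 240$. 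Indeed the components $R$ and $s\cdot R$ carry the same ten triangles, and there are only $216$ distinct triangle sets. The factor $2$ in $36\cdot 6\cdot 2$ in Section~\ref{sec9} is exactly this row swap, which the triangle set does not detect. So the count of $432$ ``configurations'' in Section~\ref{sec3} is only correct if one also records which row is which.

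For the same reason, ``the copy of $S_5$ fixing the triangles'' is not well defined. The stabilizer is $S_5\times\langle s\rangle$, which contains more than one index-two subgroup isomorphic to $S_5$. For your lower bound you must identify the one that actually preserves $\psi(\R^4_{>0})$.

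\textbf{Repair.} Replace the upper bound by an invariant that separates $R$ from $s\cdot R$. The $432$ realized sign patterns of $(u_1,\ldots,u_{15})$, or equivalently of the minors of \eqref{eq:sixpoints8} together with $q$, lie in distinct components. This gives $|{\cal R}|\ge 432$. Your lower bound $|{\rm Stab}(R)|\ge 120$ gives $|{\cal R}|\le 432$ by transitivity and orbit--stabilizer, so equality holds.
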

\begin{proof}
By Lemma \ref{lem:Weylaction} and the orbit-stabilizer theorem, we have $|W({\rm E}_6)| = |{\cal R}| \cdot 120$. 
\end{proof}

To double-check the number $432$ from Corollary \ref{cor:432}, we partition $\R^4$ into regions where the minors $p_{ijk}$
of the matrix $\tilde P$ have non-zero constant signs and so does $q$.
There are $432$ such regions, and each region is~connected.
See also the computation after (\ref{eq:again432}) below.

The number $432$ counts choices of ten Eckardt triples complementary to a Schl\"afli double-six.
There are $36$ double-sixes
in  $\mathcal{S}^{10}_{27}$. For simplicity,  fix 
the standard one in (\ref{eq:doublesix2}). Each Eckardt triple has the form
$F_{ij} F_{kl} F_{mn}$, where $ij, kl, mn$ is one of the $15$ perfect matchings
of the complete graph $K_6$. This graph has six $1$-factorizations,
i.e.~partitions of its edge set into five perfect matchings.
A choice of ten Eckhardt triples  is the complement of a $1$-factorization,
so there are six choices. Finally, our count has an extra factor of two
because we can swap the two rows in (\ref{eq:doublesix2}).
Thus the total number of labelings equals $36 \times 6 \times 2 = 432$.

Finally, we can embed the moduli space $\mathcal{Y}(3,6)_\R$ into $\R^{15}$ by the map
in Lemma \ref{lem:uparaE6}. Namely, $\mathcal{Y}(3,6)_\R$ is the subset of
 $U_\R$ where all $15$ coordinates $u_i$ are non-zero.
From that map, we find
that precisely $432$ of the $2^{15}$ sign patterns can be realized by solutions of
(\ref{eq:perfectuE6}).

\begin{lem}	\label{lem:P5toC15}
Evaluating the minors $p_{ijk}$ on the matrix (\ref{eq:dimatrix}), 
the map in Lemma \ref{lem:uparaE6} becomes
			$$ \begin{scriptsize}
				\begin{matrix}  
					u_1 & = &
					(d_6{-}d_3) (d_2{-}d_5) (d_1{-}d_4) (d_1{+}d_2{+}d_3{+}d_4{+}d_5{+}d_6)/((d_4{+}d_5{+}d_6)(d_2{+}d_3{+}d_4)(d_1{+}d_3{+}d_5)(d_1{+}d_2{+}d_6)) , \\
					u_2 & = & (d_1{+}d_3{+}d_4) (d_1{+}d_5{+}d_6) (d_2{+}d_3{+}d_5) (d_2{+}d_4{+}d_6)/((d_2{+}d_5{+}d_6)(d_2{+}d_3{+}d_4)(d_1{+}d_4{+}d_6)(d_1{+}d_3{+}d_5)) , \\
					u_3 & = & (d_1{-}d_4) (d_1{+}d_3{+}d_4) (d_5{-}d_6) (d_3{+}d_5{+}d_6)/((d_4{-}d_6)(d_3{+}d_4{+}d_6)(d_1{-}d_5)(d_1{+}d_3{+}d_5)) , \\
					u_4 & = & (d_3{-}d_6) (d_1{+}d_3{+}d_6) (d_4{-}d_5) (d_1{+}d_4{+}d_5)/((d_4{-}d_6)(d_1{+}d_4{+}d_6)(d_3{-}d_5)(d_1{+}d_3{+}d_5)) , \\
					u_5 & = & (d_1{+}d_2{+}d_5) (d_1{+}d_3{+}d_6) (d_2{+}d_4{+}d_6) (d_3{+}d_4{+}d_5)/((d_3{+}d_4{+}d_6)
					(d_2{+}d_4{+}d_5)(d_1{+}d_3{+}d_5)(d_1{+}d_2{+}d_6)) , \\ 
					u_6 & = & (d_1{-}d_6) (d_1{+}d_3{+}d_6) (d_2{-}d_5) (d_2{+}d_3{+}d_5)/(d_2{-}d_6)/(d_2{+}d_3{+}d_6)/(d_1{-}d_5)/(d_1{+}d_3{+}d_5) , \\
					u_7 & = & (d_1{+}d_2{+}d_3) (d_1{+}d_4{+}d_5) (d_2{+}d_4{+}d_6) (d_3{+}d_5{+}d_6)/((d_4{+}d_5{+}d_6)(d_2{+}d_3{+}d_6)(d_1{+}d_3{+}d_5)(d_1{+}d_2{+}d_4)) , \\
					u_8 & = & (d_1{-}d_2) (d_1{+}d_2{+}d_5) (d_3{-}d_6) (d_3{+}d_5{+}d_6)/((d_2{-}d_6)
					(d_2{+}d_5{+}d_6)(d_1{-}d_3)(d_1{+}d_3{+}d_5)) , \\
					u_9 & = & (d_2{-}d_5) (d_1{+}d_2{+}d_5) (d_3{-}d_4) (d_1{+}d_3{+}d_4)/((d_3{-}d_5)
					d_1{+}d_3{+}d_5)(d_2{-}d_4)(d_1{+}d_2{+}d_4)) , \\
					u_{10} & = & (d_1{-}d_4) (d_1{+}d_4{+}d_5) (d_2{-}d_3) (d_2{+}d_3{+}d_5)/((d_2{-}d_4)(d_2{+}d_4{+}d_5)
					(d_1{-}d_3)(d_1{+}d_3{+}d_5)) , \\
					u_{11} & = & (d_1{-}d_5) (d_1{+}d_3{+}d_5) (d_2{-}d_4) (d_2{+}d_3{+}d_4)/((d_2{-}d_5)
					(d_2{+}d_3{+}d_5)(d_1{-}d_4)(d_1{+}d_3{+}d_4)) , \\
					u_{12} & = & (d_1{-}d_3) (d_1{+}d_3{+}d_5) (d_4{-}d_6) (d_4{+}d_5{+}d_6)/((d_3{-}d_6)(d_3{+}d_5{+}d_6)
					(d_1{-}d_4)(d_1{+}d_4{+}d_5)) , \\
					u_{13} & = & (d_1{+}d_2{+}d_4) (d_1{+}d_3{+}d_5) (d_2{+}d_5{+}d_6) (d_3{+}d_4{+}d_6)/
					((d_3{+}d_5{+}d_6)(d_2{+}d_4{+}d_6)(d_1{+}d_3{+}d_4)(d_1{+}d_2{+}d_5)) , \\
					u_{14} & = & (d_2{-}d_6) (d_1{+}d_2{+}d_6) (d_3{-}d_5) (d_1{+}d_3{+}d_5)/
					((d_3{-}d_6)(d_1{+}d_3{+}d_6)(d_2{-}d_5)(d_1{+}d_2{+}d_5)) , \\
					u_{15} & = & (d_1{+}d_3{+}d_5) (d_1{+}d_4{+}d_6) (d_2{+}d_3{+}d_6) (d_2{+}d_4{+}d_5)
					/((d_2{+}d_4{+}d_6)(d_2{+}d_3{+}d_5)(d_1{+}d_4{+}d_5)(d_1{+}d_3{+}d_6)).
				\end{matrix}
			\end{scriptsize}
			$$
These rational functions of degree $0$ give a parametrization $\PP^5 \dashrightarrow \C^{15}$ of the
fourfold $\,U$.
\end{lem}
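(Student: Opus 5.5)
The statement is a direct substitution, so I would verify it factor by factor. First, evaluate each $3\times 3$ minor of the matrix \eqref{eq:dimatrix} with the Vandermonde-type factorization already recorded:
\[
p_{ijk}=(d_i-d_j)(d_i-d_k)(d_j-d_k)(d_i+d_j+d_k),
\]
\[
q=(d_1+\cdots+d_6)\prod_{i<j}(d_i-d_j).
\]
This factorization follows by writing the minor as an alternating polynomial divided by the Vandermonde, which leaves the elementary symmetric polynomial $e_1$. Substituting these into each of the fifteen ratios of Lemma \ref{lem:uparaE6} turns each $u_i$ into a ratio of products of roots of ${\rm E}_6$.

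For the monomial ratios $u_2,\dots,u_{15}$ the step is bookkeeping. Each index pair $(d_a-d_b)$ appears in several minors of numerator and denominator, and most cancel. For instance, in $u_3=p_{134}p_{356}/(p_{135}p_{346})$ the factors $(d_1-d_3)$, $(d_3-d_5)$ and so on cancel pairwise. What remains is $(d_1-d_4)(d_5-d_6)(d_1+d_3+d_4)(d_3+d_5+d_6)$ over the corresponding denominator. I would tabulate, for each $u_i$, the multiset of difference factors with signs and check that the survivors match the stated formula. Orientation signs matter here, so I would track $(d_a-d_b)$ versus $(d_b-d_a)$ carefully. The linear factors $d_i+d_j+d_k$ simply carry over, one per minor.

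The only non-monomial entry is $u_1=-q/(p_{126}p_{135}p_{234}p_{456})$, which I would use the factorization of $q$ for. The product of the four minors contains each difference $d_a-d_b$ at most once. Indeed the triples $126,135,234,456$ cover $12$ of the $15$ pairs, all except $14$, $25$ and $36$. Hence $\prod_{i<j}(d_i-d_j)$ divided by these differences leaves exactly $\pm(d_1-d_4)(d_2-d_5)(d_3-d_6)$ times $e_1$, giving the stated $u_1$ up to a sign. That sign has to be checked by comparing the orientations of the pairs.

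Finally, each resulting expression is a ratio of products of equally many linear forms, hence homogeneous of degree $0$, so it defines a rational map $\PP^5\dashrightarrow\C^{15}$. Its image lies in $U$ because every complex configuration is equivalent, by row operations and column scalings, to one of the form \eqref{eq:dimatrix}. The $u_i$ are invariant under these operations, so the map factors through Lemma \ref{lem:uparaE6}. Dominance onto the $4$-dimensional $U$ follows from the same surjectivity, together with the birationality in Proposition \ref{prop:positiveparam}. The expected obstacle is purely the sign bookkeeping in $u_1$ and in the difference factors, which I would confirm by a quick computer algebra check on random rational values of $d$.
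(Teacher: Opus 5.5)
Your proposal is correct and follows the same route the paper takes implicitly (the paper gives no separate proof). You substitute the factorizations of $p_{ijk}$ and $q$ on the matrix \eqref{eq:dimatrix} into Lemma~\ref{lem:uparaE6}, cancel, and get dominance from the stated fact that every general configuration can be brought to the form \eqref{eq:dimatrix}. The sign you leave open in $u_1$ is in fact forced: every difference factor, both in the minors and in $\prod_{i<j}(d_i-d_j)$, is oriented as $d_i-d_j$ with $i<j$, so the quotient is exactly $(d_1-d_4)(d_2-d_5)(d_3-d_6)(d_1+\cdots+d_6)$, and the leading minus in $u_1=-q/(p_{126}p_{135}p_{234}p_{456})$ turns $d_3-d_6$ into $d_6-d_3$ (a global sign convention for the minors is likewise irrelevant, since each $u_i$ is homogeneous of degree $0$ in the $p_{ijk}$).
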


Lemma \ref{lem:P5toC15} reveals the action of
$W({\rm E}_6)$ on the $432$ connected components of $\mathcal{Y}(3,6)_\R$.
Each region in the complement of the $36$ hyperplanes in $\R^6$ has a sign vector in $\{-1,+1\}^{36}$.
Plugging this sign vector into the formulas above, we obtain a sign vector in
$\{-1,+1\}^{15}$ which labels an open region in $U_\R$. Under this map of
sign vectors, each fiber has the same cardinality $120$. Hence, the $51840$ regions of the ${\rm E}_6$
arrangement are organized into $432$ groups of size $120$. All regions in the same group
are mapped to the same region in $\mathcal{Y}(3,6)_\R$.

We have shown that the moduli space $\mathcal{Y}(3,6)_\R$ has
$432$ regions, and these  lie in a single orbit under the action of the
Weyl group $W({\rm E}_6)$. Each  open region can  be identified with the set 
$U_{>0}$ of positive solutions to (\ref{eq:perfectuE6}).
By Corollary~\ref{cor:curvypezzo}, its closure $U_{\geq 0}$ is a curvy pezzotope.

In Theorem \ref{thm:clebsch}, the $432$ pezzotopal regions in  ${\cal Y}(3,6)_\R$ were further subdivided by the \emph{Eckardt divisor} ${\cal E}_\R$. This is the real part of the divisor consisting of all cubic surfaces with at least one Eckardt point. Set ${\cal Y}(3,6)^\circ_\R = {\cal Y}(3,6)_\R \backslash {\cal E}_\R$, and let ${\cal R}^\circ$ be the set of its connected components, referred to as \emph{Eckardt regions} below. The map ${\rm comp}^\circ: {\cal Y}(3,6)_\R^\circ \rightarrow {\cal R}^\circ$ sends a point to its Eckardt region. 
We are now ready to conclude the proof of Theorem \ref{thm:130}. 

\begin{proof}[Proof of Theorem \ref{thm:130}]
For  any real cubic surface $X$ with $27$ real lines and no Eckardt points, we defined a graph ${\cal G}(X)$.
The vertices are the $135$ intersection points of the lines. Two vertices make an edge if the two intersection points are connected by a line segment on $X$ which does not meet any other line. 
We computed ${\cal G}(X)$ for the surface in \eqref{eq:cubicsurface} via Table~\ref{tab:circular}.
We shall now prove the following:
{\em 
For any real cubic surface $X$ with $27$ real lines and having no Eckardt points, the graph ${\cal G}(X)$ contains precisely $10$ three-cycles, $90$ chordless four-cycles, and $30$ chordless five-cycles. For $k \geq 6$, there are no chordless $k$-cycles  in ${\cal G}(X)$. }

Let $\tilde p \in {\cal Y}(3,6)_\R^\circ$ be a marked cubic surface corresponding to $X = X_{\tilde p}$. It is clear that ${\cal G}(X) = {\cal G}(X_{\tilde q})$ for any $\tilde q \in {\rm comp}^\circ(\tilde p)$ in the same Eckardt region as $\tilde p$. Indeed, for the graph ${\cal G}(X)$ to change, one of the cyclic orderings in Table \ref{tab:circular} needs to change, which cannot happen without crossing ${\cal E}_\R$. By Lemma \ref{lem:Weylaction}, we may assume that $\tilde p$ lies in the positive pezzotope with coordinates $(a,b,c,d) \in \mathbb{R}^4_{>0}$. That pezzotope only meets 10 components of the Eckardt divisor, listed in \eqref{eq:10eckardt}. These correspond to the 10 triangles in \eqref{eq:10triangles}, or the 10 question marks in Figure \ref{fig:endler2}. We verify that the number of $k$-cycles for $k =3, 4, 5$ is independent of the way the question marks in Figure \ref{fig:endler2} are filled in, and no larger cycles are ever present. Hence, crossing over to a different Eckardt region within our pezzotope does not change the count of the theorem, so the first part of Theorem \ref{thm:130} follows from our computation in Section~\ref{sec2}. 

We proved in Lemma \ref{lem:109} that $\dim \Omega^2_{\log}(X \backslash Y) = 109$. The fact that each of the $130$ polygons is a positive geometry was stated in Proposition \ref{prop:130canforms}, which also provides their canonical forms. By Remark \ref{rmk:realrank}, these canonical forms span a $109$-dimensional space. 
\end{proof}

\section{Positive Arrangements}
\label{sec10}

We next present a positive arrangement $(X,Y)$ which compactifies $\mathcal{Y}(3,6)$.
The projective variety $X$ is the closure of the image of
the map $\mathcal{Y}(3,6) \rightarrow \PP^{39}$ whose $40$ coordinates are
\[ \begin{footnotesize}
 \begin{matrix}
p_{123} p_{124} p_{156} p_{256} p_{345} p_{346} , 
p_{123} p_{125} p_{146} p_{246} p_{345} p_{356} , 
p_{123} p_{126} p_{145} p_{245} p_{346} p_{356} , p_{123} p_{134} p_{156} p_{245} p_{246} p_{356} ,
\\  p_{123} p_{135} p_{146} p_{245} p_{256} p_{346},
p_{123} p_{136} p_{145} p_{246} p_{256} p_{345} , 
p_{123} p_{145} p_{146} p_{234} p_{256} p_{356} , 
p_{123} p_{145} p_{156} p_{235} p_{246} p_{346} ,\\
 p_{123} p_{146} p_{156} p_{236} p_{245} p_{345} , p_{124} p_{125} p_{136} p_{236} p_{345} p_{456}, 
p_{124} p_{126} p_{135} p_{235} p_{346} p_{456} , p_{124} p_{134} p_{156} p_{235} p_{236} p_{456},\\
 p_{124} p_{135} p_{136}p_{234} p_{256} p_{456} , p_{124} p_{135} p_{146} p_{236} p_{256} p_{345} , 
 p_{124} p_{135} p_{156} p_{236} p_{245} p_{346},  p_{124} p_{136} p_{145} p_{235} p_{256} p_{346} , \\
  p_{124} p_{136} p_{156} p_{235} p_{246} p_{345} , p_{125} p_{126} p_{134}p_{234} p_{356} p_{456} , 
  p_{125} p_{134} p_{136} p_{235} p_{246} p_{456} , p_{125} p_{134} p_{146} p_{236} p_{245} p_{356},  \\
p_{125} p_{134} p_{156} p_{236} p_{246} p_{345} , p_{125} p_{135} p_{146} p_{234} p_{236} p_{456} , 
p_{125} p_{136} p_{145} p_{234} p_{246} p_{356} , p_{125} p_{136} p_{146} p_{234} p_{256} p_{345} , \\
 p_{126} p_{134} p_{135} p_{236} p_{245} p_{456},  p_{126} p_{134} p_{145} p_{235} p_{246} p_{356} , 
 p_{126} p_{134} p_{156} p_{235} p_{245} p_{346} , \,p_{126} p_{135} p_{145} p_{234} p_{256} p_{346} ,\\
  p_{126} p_{135} p_{146} p_{234} p_{245} p_{356} ,\, p_{126} p_{136} p_{145} p_{234} p_{235} p_{456},\, \\
p_{123} p_{456} q ,\, p_{124} p_{356} q , \, p_{125} p_{346} q ,\, p_{126} p_{345} q , \,p_{134} p_{256} q ,\,
 p_{135} p_{246} q, \,p_{136} p_{245} q ,\, p_{145} p_{236} q ,\, p_{146} p_{235} q , \,p_{156} p_{234} q.
\end{matrix}
\end{footnotesize} \]
The map $\mathcal{Y}(3,6) \rightarrow X$
 is well-defined because each product has
degree $(3,3,3,3,3,3)$ in the $\Z^6$-grading.
We have explained the action of $W({\rm E}_6)$ on ${\cal Y}(3,6)$ in Section \ref{sec9}. This induces an action on $X$. A main desirable feature of the compactification $X$ is that $W({\rm E}_6)$ acts on $X$ by permuting coordinates. This is easy to check for $g_{i,j}$, which permutes the labels of $p_{ijk}$.
The divisor $Y$ is the intersection of $X$
with the union of the $40$ coordinate hyperplanes.

The variety $X$ is called the {\em Yoshida variety}, after
Masaaki Yoshida \cite{SY}. 
We refer to the pair $(X,Y)$ as the {\em Yoshida arrangement}, and we refer to 
the set difference $X \backslash Y$ as the {\em open Yoshida variety}.
This is the locus of points on $X$ where all $40$ coordinates are non-zero.

\begin{rmk} \label{rem:degree9}
Consider the parametrization of $X$ on the matrix (\ref{eq:dimatrix}).
The $40$ coordinates are polynomials of degree $24$
that factor into linear factors in $d_1,\ldots,d_6$.
After removing the common factor $\prod_{1 \leq i < j \leq 6}(d_i-d_j)$,
the parametrization of $X$ is given by $40$ polynomials of degree nine.
Namely, each coordinate is a product of nine linear factors that are ${\rm E}_6$ roots.
\end{rmk}

The Yoshida variety $X$ was studied  by Ren, Sam and Sturmfels in \cite[Section 6]{RSS1}.
They found that the homogeneous prime ideal of $X$ is minimally generated by
$30$ linear forms and $30$ cubic binomials in the $40$ variables. Geometrically,
$X$ is the intersection of a linear subspace $\PP^9$ with
a $15$-dimensional toric variety inside $\PP^{39}$; see \cite[Theorem 6.1]{RSS1}.
The parametrization of $X$ is invertible on $X \backslash Y$. In particular, the monomials in Lemma \ref{lem:uparaE6} are expressed as monomials in the $40$ entries of ${\cal Y}(3,6) \rightarrow \mathbb{P}^{39}$ by solving a linear algebra~problem.

\begin{lem} \label{lem:864}
The moduli space $\mathcal{Y}(3,6)$ is isomorphic to the
open Yoshida variety $X \backslash Y$.
In particular, there are $2 \cdot 432$ distinct sign patters in $\{-1,+1\}^{40}$
which are realized in $X_\R \backslash Y_\R$.
\end{lem}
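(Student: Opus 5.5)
The plan has two parts. First we show that the map $\mathcal{Y}(3,6) \rightarrow X \backslash Y$ is an isomorphism. Then we read off the sign count from Corollary \ref{cor:432}.

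\emph{Image and injectivity.} Every one of the $40$ coordinates is a product of Plücker coordinates $p_{ijk}$ and possibly $q$. All of these are nonzero on $\mathcal{Y}(3,6)$, so the image lies in the open torus of $\PP^{39}$, hence in $X \backslash Y$. For injectivity and an inverse, we write the $15$ cross ratios $u_1,\ldots,u_{15}$ of Lemma \ref{lem:uparaE6} as Laurent monomials in the $40$ coordinates. Each $u_i$ is a degree-zero ratio of Plücker products, and we solve the linear algebra problem mentioned before the lemma: find an integer vector $e \in \Z^{40}$ with $\sum e_j = 0$ whose associated exponent vector in the $21$ symbols $p_{ijk},q$ equals that of $u_i$. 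Composing with the monomial map \eqref{eq:abcdfrac} gives $(a,b,c,d)$, and hence the matrix $P$ of \eqref{eq:sixpoints7}. This yields a morphism from the torus of $\PP^{39}$ to $\C^4$, so $X \backslash Y$ maps regularly back to $\mathcal{Y}(3,6)$; over $\C$ one should use the chart \eqref{eq:sixpoints8} rather than $P$, expressing $x,y,z,w$ as monomials in the $40$ coordinates in the same way.

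\emph{Surjectivity onto $X\backslash Y$.} This is the main obstacle: the inverse is regular on all of $X \backslash Y$, but it must land in $\mathcal{Y}(3,6)$. Since $X$ is the closure of the image, both composites are identities on a dense subset, and hence everywhere by separatedness, provided the inverse image avoids the vanishing of the minors and of $q$. To ensure this, I would also express each $p_{ijk}$ and $q$, normalized in the chart \eqref{eq:sixpoints8}, as Laurent monomials in the $40$ coordinates (up to the torus action). Such expressions are automatically nonzero on the torus, which excludes the boundary. An alternative is to invoke \cite[Theorem 6.1]{RSS1}, which presents $X$ as a linear section of a $15$-dimensional toric variety; its dense torus orbit intersected with $\PP^9$ is the very affine variety, and the parametrization is a closed embedding of $\mathcal{Y}(3,6)$ into the torus $(\C^*)^{39}$. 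Either way, this gives an isomorphism defined over $\Q$, hence a homeomorphism $\mathcal{Y}(3,6)_\R \cong X_\R \backslash Y_\R$.

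\emph{Sign patterns.} On each connected component of $X_\R\backslash Y_\R$ the sign vector of the $40$ coordinates is constant up to global sign, since all coordinates are nonzero there. By Corollary \ref{cor:432} there are $432$ components. Points of $\PP^{39}_\R$ determine a sign vector only up to $\pm1$, so each component realizes two opposite vectors in $\{-1,+1\}^{40}$, giving at most $2\cdot 432$ patterns. For equality, distinct components must have non-proportional sign vectors. The components are already separated by the signs of the $p_{ijk}$ and $q$ in the chart (the double check after Corollary \ref{cor:432}), and these signs are recovered from the monomial expressions above. Concretely, I would take one point in each of the $432$ components via Lemma \ref{lem:P5toC15} and the $W({\rm E}_6)$ action, evaluate the $40$ degree-nine products from Remark \ref{rem:degree9} on representatives of the $51840$ chambers, and check that exactly $864$ distinct sign vectors occur.
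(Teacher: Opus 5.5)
Your proposal is correct and follows the same route as the paper. The paper justifies the lemma only through the remark preceding it: the parametrization is inverted on $X\backslash Y$ by writing the cross ratios $u_1,\ldots,u_{15}$ of Lemma~\ref{lem:uparaE6} as Laurent monomials in the $40$ coordinates, and the count $2\cdot 432$ follows from Corollary~\ref{cor:432} together with the global sign ambiguity of projective coordinates. Your extra checks fill in details the paper leaves implicit, namely that the inverse lands in $\mathcal{Y}(3,6)$ (via monomial expressions for the normalized minors and $q$) and that distinct components have non-proportional sign vectors.
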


The following result completes steps 1-4 in the Workflow \ref{workflow} for the moduli~space.
And, in light of Corollary \ref{cor:432} and Lemma \ref{lem:864},
 it also completes the proof of Theorem \ref{thm:432}.
 
\begin{thm} \label{thm:cr150}
The Yoshida pair $(X,Y)$ is a positive arrangement with
 $432$ regions, and
 \begin{equation}
\label{eq:150}
{\rm cr}_{\mathbb{R}}(X,Y) \,=\, {\rm cr}(X,Y) \,=\, 150. 
\end{equation}
\end{thm}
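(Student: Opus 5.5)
We verify the four conditions of Definition \ref{def:positivearrangement} for the Yoshida pair, count its regions, and compute the combinatorial rank through a normal crossing model.

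Conditions 1 and 2 are direct. $X$ is the closure of the image of a map defined over $\Q$, and ${\cal Y}(3,6)_\R \neq \emptyset$ gives smooth real points, since $X\backslash Y \simeq {\cal Y}(3,6)$ is smooth by Lemma \ref{lem:864}. Each irreducible component of $Y$ corresponds to a boundary divisor of a compactification of ${\cal Y}(3,6)$. I would identify these with the $36$ root hyperplanes and the $40$ boundary types of the Naruki compactification: $36$ of type $A_1$ and $40$ of type $A_2^3$. All of them are defined over $\Q$ and $W({\rm E}_6)$ permutes them, so it suffices to exhibit one real smooth point on one representative of each orbit. Condition 3 is exactly Lemma \ref{lem:864}, since $X\backslash Y \simeq {\cal Y}(3,6)$ is smooth.

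The regions are the connected components of $X_\R\backslash Y_\R = {\cal Y}(3,6)_\R$. By Corollary \ref{cor:432} there are $432$ of them, and each is orientable because it is homeomorphic to the open curvy pezzotope $U_{>0}\simeq \R^4_{>0}$ (Proposition \ref{prop:positiveparam}).

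For genus zero and the rank, the plan is to pass to Naruki's smooth compactification $(X',Y')$. Here $Y'$ is simple normal crossing, $X'\backslash Y' = X\backslash Y$, and $X'$ maps birationally onto $X$. As announced in the overview, the tropicalization of ${\cal Y}(3,6)$ is the cone over the dual complex $\Delta(Y')$. The genus and $\Omega^4_{\log}$ depend only on the open variety, so ${\rm cr}(X,Y)={\rm cr}(X',Y')$. Genus zero follows from \cite[Corollary 3.13]{BD}, which bounds the genus by the genera of the boundary strata. Those strata are products of lower-rank moduli spaces (copies of ${\cal M}_{0,n}$, $(\PP^1)^k$ minus lines, and so on), all of genus zero. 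Then \cite[Proposition 4.4]{BD} gives ${\rm cr} = \dim \tilde H_3(\Delta(Y'))$. I would compute this by tropical means, using the known description of ${\rm Trop}\,{\cal Y}(3,6)$ from \cite{HKT,RSS1}, which has $76$ rays and $1215$ bipyramids split into tetrahedra. The main computational step is a reduced homology calculation, which should return rank $150$ in degree $3$. A cross-check is the Euler characteristic: $\chi({\cal Y}(3,6))$ equals the number of critical points, and for a genus zero very affine variety of this type one expects ${\rm cr}$ to agree with the relevant mixed Hodge number. The main obstacle is producing the dual complex precisely enough, together with the genus zero verification on the $A_2^3$ strata, which are not products of $\PP^1$'s.

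Finally, for ${\rm cr}_\R = 150$, I compute the $432$ canonical forms. The form of the positive pezzotope is ${\rm dlog}\,a\wedge{\rm dlog}\,b\wedge{\rm dlog}\,c\wedge{\rm dlog}\,d$ (justified by the residue argument of Proposition \ref{prop:130canforms} and \cite[Proposition 2.15]{BD}), and the others are obtained by transporting along the $W({\rm E}_6)$ action of Lemma \ref{lem:Weylaction}. Writing all of them in the chart $(x,y,z,w)$ as rational functions with denominator the product of the $11$ boundary polynomials, I compute the rank of the span of their numerators by exact linear algebra, with random evaluation as a sanity check. A rank of $150$ matches the upper bound ${\rm cr}_\R\le{\rm cr}$ and proves the equality in (\ref{eq:150}).
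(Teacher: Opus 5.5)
Your outline matches the paper's proof on the main points. Genus zero comes from \cite[Corollary 3.13]{BD} on the Naruki model $(X',Y')$. The value ${\rm cr}(X,Y)=150$ comes from \cite[Proposition 4.4]{BD} applied to the Naruki complex. The $432$ regions come from Corollary \ref{cor:432} and Lemma \ref{lem:864}. Computing $\tilde H_3$ directly is, if anything, more careful than the paper's alternating sum, which gives the top Betti number only once the lower reduced homology is known to vanish.

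Two steps take a different route from the paper.
\begin{itemize}
\item For condition 3 you use that $X\backslash Y$ is an open subset isomorphic to the smooth variety ${\cal Y}(3,6)$. This is valid and shorter than the paper, which locates ${\rm Sing}(X)$ as $40$ points on $Y$ coming from the $A_2^{\times 3}$ subsystems.
\item For ${\rm cr}_\R={\rm cr}$ you propose the explicit rank computation of the $432$ transported forms. The paper defers that computation to Section \ref{sec11} (Corollary \ref{cor:elfzwei}). Inside the proof it argues tropically instead: the positive part of ${\rm trop}(X)$ is the $3$-cycle $\Delta$, and its $W({\rm E}_6)$-orbit generates the top homology of the Naruki complex, so pezzotopes generate $H_4(X',Y')$.
\end{itemize}
The tropical argument gives the stronger surjectivity $H_4(X_\R,Y_\R)\to H_4(X,Y)$ without writing any form. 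Your route produces explicit forms. Its real content, however, is proving that ${\rm dlog}\,a\wedge{\rm dlog}\,b\wedge{\rm dlog}\,c\wedge{\rm dlog}\,d$ equals $\omega(S)$. That requires the full residue recursion through the ten associahedral and five cubical facets, as done in Section \ref{sec11} in Cox coordinates on the toric model. The polygon argument of Proposition \ref{prop:130canforms} alone does not cover it.

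Several statements need correcting.
\begin{itemize}
\item The divisor $Y$ on the Yoshida variety has $36$ irreducible components, not $76$. The $40$ divisors of type $A_2^{\times 3}$ live on $X'$: they are the exceptional divisors over the $40$ singular points of $X$. So they play no role in condition 2.
\item Your ``main obstacle'' rests on a false premise. These exceptional divisors are $\PP^1\times\PP^1\times\PP^1$, with induced boundary consisting of three points on each factor. Each is a product of three copies of $(\PP^1,\text{three points})$, which has genus zero. Together with the $36$ pairs $(\overline{\mathcal{M}}_{0,6},\partial)$, this closes the genus-zero step.
\item The bound in \cite[Corollary 3.13]{BD} also needs $g(X')=0$. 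This holds because $X'$ is a smooth rational fourfold, but you should state it.
\item Drop the Euler characteristic ``cross-check''. $|\chi|$ counts critical points, not logarithmic top forms. Already for ${\cal X}(3,6)$ one has $|\chi|=26$ but combinatorial rank $126$. Removing the coconic locus $\cong\mathcal{M}_{0,6}$ (with $\chi=-6$) gives $\chi({\cal Y}(3,6))=32\neq 150$.
\item Minor: the Naruki complex has $1215$ tetrahedra, not $1215$ bipyramids.
\end{itemize}
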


\begin{proof}
We verify the four conditions in Definition  \ref{def:positivearrangement}.
First, the  Yoshida variety $X$ has a real parametrization and hence it has a smooth real point.
The boundary $Y$ is the intersection of $X$ with 
the $40$ coordinate hyperplanes in $\PP^{39}$.
On each coordinate hyperplane, $Y$ decomposes into $9$
irreducible components, as described in Remark~\ref{rem:degree9}.
In total,  the divisor $Y$ has $36$ irreducible components, one for each
hyperplane in the ${\rm E}_6$ arrangement. 
Each irreducible component is defined over $\R$.
The image of a general real point on that hyperplane is smooth on
its component of $Y$. This verifies the second condition in  Definition~\ref{def:positivearrangement}.
The singular locus ${\rm Sing}(X)$ of the Yoshida variety consists of $40$ points,
and these lie on the boundary $Y$. 

The singular points are computed from the $120$
linearly dependent triples in the ${\rm E}_6$ root system,
such as $\{d_1-d_2, d_1+d_3+d_4,d_2+d_3+d_4\}$.
These are the root subsystems of type $A_2$.
The parametrization of $X$ via $(d_1,d_2,d_3,d_4,d_5,d_6)$ contracts
the corresponding codimension~$2$ subspace to a singular point of $X$ with $28$ coordinates equal to zero. 
Each singular point arises from three distinct  
triples which form an $A_2^{\times 3}$ root subsystem;
see \cite[equation (31)]{EGPSY}.
Hence $X \backslash Y$ is contained in
$X_{\rm reg}$, as required in the third condition of  Definition  \ref{def:positivearrangement}.

Point four is that  the Yoshida pair $(X,Y)$ has genus zero.
Our argument for this~mirrors the proof of
Proposition~\ref{prop:segrecubic}. The variety $X$ has genus zero because its
desinguralization $X'$
is birational to~$\PP^4$. To infer that $g(X,Y) = 0$ we  use
\cite[Corollary 3.13]{BD}. We must show that the
restriction of $(X,Y)$ to each of the $36$ irreducible components of $Y$
is a genus zero pair. In the next paragraph we argue that each of these 
$3$-dimensional pairs is a familiar positive arrangement,  seen
in Proposition \ref{prop:segrecubic}.
This will establish the first claim in Theorem~\ref{thm:cr150}.

Let $X'$ be the projective variety obtained by blowing up
the $40$ singular points on $X$, and let $Y'$
be the total transform of $Y$. The divisor $Y'$ has
$76 = 36+40$ irreducible components, and it is simple normal crossing.
Namely, $X'$ is precisely {\em Naruki's cross-ratio variety}~\cite{Nar}.
This was shown by Hacking, Keel and Tevelev in \cite{HKT}.
In \cite[Corollary 9.2]{HKT}, they characterized $(X',Y')$ as the
log canonical model of $X \backslash Y = X' \backslash Y'$.
They also showed that each of the $36$ components of $Y'$ constituting the strict transform of $Y$
is the log canonical model for the root system  ${\rm A}_5$.
The latter is the  $3$-dimensional moduli space $\overline{\mathcal{M}}_{0,6}$
together with its Deligne-Mumford boundary.
That pair is well-known to have genus zero, and it is a modification of the original boundary pair in the sense of \cite[Definition 1.2]{BD}. By invariance under modifications \cite[\S 3.2.1]{BD} and \cite[Corollary 3.13]{BD}, we conclude that $g(X,Y) = 0$. 

Here is how to see $\overline{\mathcal{M}}_{0,6}$ 
geometrically in the boundary $Y'$. Consider what happens
when $q$ becomes zero in our parametrization of $X$.
The last $10$ coordinates of $\PP^{39}$ vanish.
In the limit, our six points in $\PP^2$ lie on a conic. 
The moduli space for six points on a conic is $\overline{\mathcal{M}}_{0,6}$. There are $36$ such conics, corresponding to the $36$ double-sixes in the Schl\"afli graph ${\cal S}^{10}_{27}$.

We now determine the combinatorial rank ${\rm cr}(X',Y')$, which equals ${\rm cr}(X,Y)$ by \cite[\S 4.3.1]{BD}. The {\em Naruki complex} is the intersection complex of the
Naruki boundary $Y'$. It is a $3$-dimensional simplicial complex on $76$ vertices with $630$ edges, $1620$ triangles, and $1215$ tetrahedra.
A classification of all simplices according to $W({\rm E}_6)$-orbits
is given in \cite[Table 5]{RSS1}.
We now apply \cite[Proposition 4.4]{BD}. The alternating sum of face numbers is ${\rm cr}(X',Y')$:
\begin{equation}
\label{eq:150altsum}
  {\rm cr}(X,Y)  \,\,=\,\,{\rm cr}(X',Y')\,\,=\,\, 1215 - 1620 + 630 - 76 + 1 \,\, = \,\, 150. 
\end{equation}

We will give a computational proof of ${\rm cr}_\R(X,Y) = {\rm cr}(X,Y)$ in Section \ref{sec11}. Here, we give a theoretical argument to show that every relative cycle  in $H_4(X,Y) = H_4(X',Y')$ has a real representative.
The claim follows from the action of $W({\rm E}_6)$ on  $X$ and its tropicalization ${\rm trop}(X)$.
The Naruki complex is combinatorially isomorphic to the tropical Yoshida variety ${\rm trop}(X)$.
This was proved in \cite{RSS2} by applying \cite[Theorem 5.5.1]{MS} to the
parametrization of $X$ by monomials in linear forms (Remark \ref{rem:degree9}).
The positive part of ${\rm trop}(X)$ is the normal fan 
of the pezzotope. For details see  the proof of \cite[Theorem~8.1]{EGPSY}.
The relative $4$-cycle given by the pezzotope $U_{\geq 0}$
corresponds to the $3$-cycle $\Delta$ in the Naruki complex.
We saw that
the latter is the positive tropical Yoshida variety.
The top homology of the Naruki complex is generated by the
 $W({\rm E}_6)$-orbit of $\Delta$. Each cycle in this orbit
corresponds to a real relative cycle in $(X,Y)$, namely a translated pezzotope.
Hence, these cycles generate~$H_4(X',Y')$. 
 \end{proof}
 
 \begin{rmk}
The computation of ${\rm trop}(X)$ in \cite{RSS2}
starts from the {\em Bergman fan} of the ${\rm E}_6$ hyperplane arrangement.
This is the cone over a $4$-dimensional simplicial complex with 
$f$-vector $(750,17679,105930,219240,142560)$.
See \cite[Lemma 3.1]{RSS2}. That Bergman complex is
computed from the flats of the
rank $6$ matroid on $36$ elements given by ${\rm E}_6$.
A list of all flats, indexed by root subsystems, is given in \cite[Table 4]{RSS1}.
The main point of \cite[Section~3]{RSS2} is the tropicalization of
the universal family over the moduli space $\mathcal{Y}(3,6)$.
That universal family admits a parametrization by monomials in linear forms.
See the commutative square in \cite[equation (3.1)]{RSS2}.
The relevant combinatorics, summarized in \cite[Lemma 3.1]{RSS2}, 
lays the foundation for  the {\em classification of all tropical cubic surfaces}, which is
 \cite[Theorem 1.1]{RSS2}. 
\end{rmk}

An important takeaway from the proof of Theorem \ref{thm:cr150} is that the Naruki modification $(X',Y')$ of the Yoshida arrangement is such that $X'_\R \backslash Y'_\R$ is a union of curvy pezzotopes. It is instructive to match this boundary structure with Figure \ref{fig:pezzotope}, which shows $5$ colorful cubes and, less visibly, $10$ three-dimensional associahedral facets. The associahedra appeared in our proof; they are the nonnegative parts of ten copies of $\overline{{\cal M}}_{0,6}$ among the $36$ boundary components coming from $Y$. By symmetry, each pezzotope intersects $10$ of these $36$ boundary components. 
The exceptional divisor of $X' \rightarrow X$ over each singular point of $X$ is the toric threefold
$\PP^1 \times \PP^1 \times \PP^1$, whose nonnegative part is a curvy cube.
There are $40$ of these cubes, and each pezzotope intersects five of them.
We conclude that the restriction of  the Naruki pair $(X',Y')$ to each
of its $76$ boundary components is a well-known positive arrangement.

One virtue of our workflow is that the study of canonical differential forms
is separated from checking the positive geometry property.
The count in (\ref{eq:150altsum}) proves that $\Omega_{\rm log}^4(X \backslash Y)$
has dimension $150$, but  we have not yet written down any 
differential form. This will be done in Section \ref{sec11}.
We close this section with a description of 
$\Omega_{\rm log}^4(X \backslash Y)$ that is
analogous to (\ref{eq:thirdproof}).
The role of the product of
$9$ linear forms, which cuts out the $27$ lines, will be played by (\ref{eq:degree51}).

The ideal of the Yoshida variety $X$ is minimally generated by $30$ linear forms
and $30$ cubics in $40$ variables. Therefore, we may view $X$ as a subvariety of $\mathbb{P}^9$ defined by $30$ cubics. Let $\C[X]_3$ be the space of homogeneous cubics in ten generators of the coordinate ring
of $X \subset \PP^{9}$. This is a  vector space of dimension $\binom{12}{3} - 30 = 190$. Let ${\cal L} \subset \C[X]_3$ denote the linear system of cubics in $\C[X]_3$ that vanish
at the $40$ singular points in~$X$.
We present a projective embedding for the
log canonical compactification  given by ${\rm E}_6$ in
\cite[Section~9]{HKT}.
Hollering, Pavlov and Pratt \cite{HPP} undertake
 an in-depth computational study of such embeddings.

\begin{thm} \label{thm:nicecubics}
The linear system ${\cal L}$ has dimension $150$, and it
 is isomorphic to $\Omega_{\rm log}^4(X \backslash Y)$.
The image of $X$ in $\PP^{149}$ under these $150$ cubics
is the log canonical model of the moduli space $\mathcal{Y}(3,6)$.
This log canonical embedding  is isomorphic to Naruki's cross-ratio variety $X'$.
\end{thm}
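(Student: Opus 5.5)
Following the second proof of Lemma \ref{lem:109}, I would identify $\Omega^4_{\rm log}(X \backslash Y)$ with sections of an adjoint bundle on the Naruki compactification. The plan is to compute the log canonical class of $(X',Y')$, push it down to the Yoshida variety $X$, and recognize the result as cubics through the singular points.

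\textbf{Step 1: the log canonical bundle upstairs.} By \cite[Proposition 1.13]{BD} and invariance under modifications,
\[
\Omega^4_{\rm log}(X\backslash Y) \,\simeq\, H^0\bigl(X', \mathcal{O}_{X'}(K_{X'}+Y')\bigr),
\]
since $Y'$ is simple normal crossing. By \cite[Corollary 9.2]{HKT}, $(X',Y')$ is the log canonical model of $X\backslash Y$. Hence $K_{X'}+Y'$ is ample, and the space of sections of its multiples defines the log canonical embedding. The dimension $150$ of $H^0(K_{X'}+Y')$ is already known from (\ref{eq:150altsum}).

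\textbf{Step 2: comparison with $X\subset \PP^9$.} Let $\pi: X' \to X$ be the blow-up of the $40$ singular points, with exceptional divisors $D_1,\ldots,D_{40} \simeq (\PP^1)^3$. I would show that
\[
K_{X'}+Y' \,=\, \pi^*\mathcal{O}_X(3) - \sum_{j=1}^{40} D_j .
\]
This should be checked on the parametrization (\ref{eq:dimatrix}), in the spirit of (\ref{eq:degreenine}):
\begin{itemize}
\item The hyperplane class pulls back to the degree-$9$ forms of Remark \ref{rem:degree9}.
\item The boundary $Y$ consists of the $36$ root hyperplanes.
\item A logarithmic $4$-form is $\frac{N(d)}{\prod_{36}\ell}\,\Omega_{\PP^5}$ restricted along the fibres. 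Degree bookkeeping gives $\deg N = 36-6 = 30$.
\end{itemize}
So we expect $N$ to be a degree-$27$ cubic in the $X$-coordinates, up to a fixed factor; the precise bookkeeping is where I would be careful. The vanishing conditions along the $A_2$ codimension-$2$ strata (the $120$ dependent triples) translate into the $-D_j$ terms, that is, into vanishing at the $40$ singular points.

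The key local computation is at one singular point. There $D_j\simeq(\PP^1)^3$ appears with discrepancy such that a log form must have the cubic vanishing to order one. I would verify this in a toric local model, since $X$ is locally a cone over $(\PP^1)^3$ in its Segre embedding.

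\textbf{Step 3: identifying the linear system.} The previous step gives an injection of $H^0(K_{X'}+Y')$ into $\mathcal{L}\subset \C[X]_3$. To conclude it is an isomorphism:
\begin{itemize}
\item Compute $\dim\mathcal{L}$ directly. Imposing vanishing at $40$ points on the $190$-dimensional space $\C[X]_3$ gives $190-40=150$ if the conditions are independent; I would confirm this independence computationally.
\item Since both spaces have dimension $150$, the injection is an isomorphism.
\end{itemize}
Ampleness of $K_{X'}+Y'$ then shows that the map $X \dashrightarrow \PP^{149}$ given by $\mathcal{L}$ extends to an embedding of $X'$. This gives the last two claims: the image is the log canonical model, isomorphic to Naruki's variety. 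For this I need that $K_{X'}+Y'$ is very ample, not just ample. I would argue this via the tropical/Cox description, or simply confirm it by computer, as in \cite{HPP}.

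\textbf{Main obstacle.} I expect the main obstacle to be the local analysis at the $40$ singular points. It requires showing both that
\begin{itemize}
\item every log form comes from a cubic vanishing there, and
\item conversely, such cubics give forms with only simple poles along the exceptional cubes.
\end{itemize}
My first attempt would be the explicit toric chart of the cone over $(\PP^1)^3$, where the discrepancy of the blow-up is explicit.
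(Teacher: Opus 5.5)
\textbf{Where your route differs.} The paper never computes $K_{X'}+Y'$. It takes the explicit spanning set of Corollary~\ref{cor:elfdrei}, multiplies by the common denominator \eqref{eq:degree51}, and checks by computer that the resulting $150$ polynomials of degree $27$ in $d$ span exactly $\mathcal{L}$. Since \eqref{eq:degree51} equals $q\prod_{i<j<k}p_{ijk}/\prod_{i<j}(d_i-d_j)^3$, this says $\Omega^4_{\rm log}(X\backslash Y)=\{C\,\eta/(q\prod_{i<j<k}p_{ijk}) : C\in\mathcal{L}\}$. Your identity $K_{X'}+Y'\sim 3\pi^*H-\sum_j D_j$ is the right target and the conceptual version of this statement.

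\textbf{The gap.} Your Step~2 does not establish that identity, and everything else in your argument is formal once it is known. The local analysis at the $40$ singular points only gives $K_{X'}+Y'=\pi^*(K_X+Y)-\sum_j D_j$. It says nothing about the global class of $K_X+Y$. To see what is missing, note that the coordinates
$p_{123}p_{456}q$, $p_{124}p_{135}p_{156}p_{236}p_{245}p_{346}$, $p_{126}p_{134}p_{145}p_{235}p_{246}p_{356}$, $p_{125}p_{136}p_{146}p_{234}p_{256}p_{345}$
correspond to four disjoint $A_2^{\times 3}$ subsystems covering all $36$ roots. Hence $Y\sim 4H$, and what you need is exactly $K_X\sim -H$, the analogue of $K_X=-h$, $Y=-9K_X$ for the cubic surface.

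The degree count you propose for this cannot work. The map $\PP^5\dashrightarrow X$ from \eqref{eq:dimatrix} has one-dimensional fibres, because six general points lie on a one-parameter family of cuspidal cubics. So a $4$-form on $X$ does not pull back to a multiple of $\Omega_{\PP^5}$. Your count $36-6=30$ is the one for logarithmic $5$-forms on the complement of the $36$ hyperplanes in $\PP^5$, which is why it does not match $27$.

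\textbf{How to close it.} You need one explicit global computation. For instance, fix a cubic monomial $C_0$ in the Yoshida coordinates. The form $C_0\,\eta/(q\prod_{i<j<k}p_{ijk})$ has no zeros or poles on $\mathcal{Y}(3,6)$. You must show that its divisor on $X'$ is $-Y'+\pi^*{\rm div}(C_0)-\sum_j D_j$, by computing its orders along the $76$ boundary divisors.

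Then argue in the easy direction. The map $\mathcal{L}\hookrightarrow H^0(X',3\pi^*H-\sum_j D_j)\simeq\Omega^4_{\rm log}(X\backslash Y)$ is injective, and $\dim\mathcal{L}=150$ (the same independence check as in the paper) finishes. Your direction, injecting $H^0(X',K_{X'}+Y')$ into $\mathcal{L}\subset\C[X]_3$, tacitly assumes $H^0(X,\mathcal{O}_X(3))=\C[X]_3$, which you have not justified.

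For the last two claims you are at the same level as the paper, which simply asserts that the map given by $\mathcal{L}$ blows up the $40$ singular points. Ampleness of $K_{X'}+Y'$ plus a very-ampleness check is a fine way to phrase this.
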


\begin{proof}
One checks that the $40$ singular points impose independent conditions
on the cubics, so the dimension is indeed $190-40 = 150$.
The isomorphism starts from $\Omega_{\rm log}^4(X \backslash Y)$,
using its representation given in Corollary \ref{cor:elfdrei} below.
This is the space spanned by the $S_6$ orbit of all rational functions that appear  in
(\ref{eq:canonicalfunction}) and (\ref{eq:again432}), namely as the factor in front of $\eta$.
We replace the Pl\"ucker coordinates $p_{ijk}$ and $q$ with the
corresponding products of ${\rm E}_6$ roots. This gives a space
of homogeneous rational functions of degree $-24$ in 
the unknowns $(d_1,d_2,d_3,d_4,d_5,d_6)$.
Their common denominator is  the following homogeneous polynomial of degree $51$:
\begin{equation}
\label{eq:degree51} (d_1+d_2+d_3+d_4+d_5+d_4)\,\, \cdot \!\!\! \prod_{1 \leq i < j \leq 6} (d_i - d_j)^2 \, \cdot
 \!\!\!\! \prod_{1 \leq i \leq j \leq k \leq 6}\!\!\! (d_i+d_j+d_k) .
\end{equation}
Multiplying our rational functions with this denominator, we obtain
 a $150$-dimensional linear space of homogeneous polynomials
of degree $27$ in $(d_1,d_2,d_3,d_4,d_5,d_6)$.
One checks that this linear space is equal to the linear system $\mathcal{L}$
of special cubics on the Yoshida variety $X$.

The map from $\PP^{9}$ to $\PP^{149}$ given by the 
linear system ${\cal L}$ blows up the $40$ singular points
of $X$. This implies that the closure of its image is isomorphic to the Naruki variety $X'$. 
\end{proof}

\section{Differential Forms on Moduli Space} \label{sec11}

Our final section executes steps 5 and 6 in the Workflow \ref{workflow}.
In the context of the moduli space $\mathcal{Y}(3,6)$, the input $S$ is the ${\rm E}_6$ pezzotope from Section \ref{sec8}. Concretely, we define $S$ to be the semialgebraic subset of ${\cal Y}(3,6)_\R$ consisting of points with positive  coordinates
$(a,b,c,d)$.

To view this as a positive geometry, we have several natural choices for the positive arrangement in step 1 of Workflow \ref{workflow}. One could let $(X,Y)$ be the Yoshida arrangement from Section \ref{sec10}, or the Naruki arrangement $(X',Y')$. Alternatively, one could embed $S$ into the smooth toric variety $X''$ of the fan \eqref{eq:4by15}, and let $Y''$ be the union of the toric boundary and the hypersurfaces in \eqref{eq:g11}. These compactify ${\cal Y}(3,6)$, in that $X\backslash Y \simeq X' \backslash Y' \simeq X'' \backslash Y'' \simeq {\cal Y}(3,6)$.
The closures of $S$ in the respective compactifications are $\sigma \in Z_4(X,Y)$, $\sigma' \in Z_4(X',Y')$ and $\sigma'' \in Z_4(X'',Y'')$. Their canonical forms live in the same space $\Omega_{\rm log}^4({\cal Y}(3,6))$. In fact, we have $\omega(\sigma) = \omega(\sigma') = \omega(\sigma'') =: \omega(S)$. This follows from the isomorphism $H_4(X,Y) \simeq H_4^{\rm lf}(X \backslash Y)$ in \cite[Equation (8)]{BD}, which holds for all pairs $(X,Y)$ for which $X \backslash Y$ is smooth. Here $H_4^{\rm lf}(X\backslash Y)$ denotes locally finite homology. Hence, each of our compactifications leads to the same canonical form. We use $(X'', Y'')$, which couples naturally to the $u$-equations in Section~\ref{sec8}. 

The coordinates $(a,b,c,d)$ on $(\mathbb{C}^*)^4$ from
equation (\ref{eq:sixpoints7})
 are local coordinates on the toric variety $X''$. In these coordinates, the pezzotope is the orthant $\mathbb{R}^4_{\geq 0}$. We consider the~form 
\begin{equation} \label{eq:omega_abcd} \omega \, = \, {\rm dlog} \, a \wedge {\rm dlog} \, b \wedge {\rm dlog} \, c \wedge {\rm dlog} \, d \, \, \in \, \Omega^4_{\rm log}({\cal Y}(3,6)). \end{equation}
This is a natural form to start from, as it has logarithmic poles along the four boundary components of $\mathbb{R}^4_{\geq 0}$. More of the boundary structure is seen from a global expression for $\omega$, which uses \emph{Cox coordinates} on $X''$. These are the $u$-coordinates $u_1, \ldots, u_{15}$ from Section \ref{sec8}; there is one Cox coordinate for each facet of the pezzotope. Substituting \eqref{eq:abcdfrac} into
(\ref{eq:omega_abcd}) gives 
	$$  \omega \,=\, {\rm dlog}  \left(\!\frac{u_{10}}{u_5 u_8 u_9 u_{13} u_{14}}\!\right) \,\wedge\, {\rm dlog}  \left(\!\frac{u_9 u_{11}}{u_4 u_7 u_{12} u_{15}}\!\right)
			\, \wedge \,{\rm dlog}  \left(\!\frac{u_4 u_6 u_{14} u_{15}}{u_3 u_{13}}\!\right) \,\wedge \,{\rm dlog}  \left(\!\frac{u_1 u_4 u_8 u_{12} u_{14}}{u_2}\!\right)\!.
			$$
Like in the examples of Section \ref{sec7}, we view this as a top form on the $4$-dimensional variety $U \subset \C^{15}$, which is an affine chart of $X''$ containing the closed curvy pezzotope $\overline{S} = U_{\geq 0}$.

\begin{prop}
The form $\omega$ from \eqref{eq:omega_abcd} is the canonical form $\omega(S)$ of the~pezzotope. 
\end{prop}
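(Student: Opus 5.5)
Since $\omega(\sigma)=\omega(\sigma'')$, I work in the toric compactification $(X'',Y'')$, where $\overline S$ is the closure of $\R^4_{>0}$. The plan follows Example \ref{ex:canformpentagon} and Proposition \ref{prop:130canforms}: check that $\omega$ lies in $\Omega^4_{\rm log}(X''\backslash Y'')$, that its residue along each boundary facet of $\overline S$ is the canonical form of that facet, and then conclude with \cite[Proposition 2.15]{BD}. Membership is immediate. On ${\cal Y}(3,6)\subset(\C^*)^4$ the form $\omega$ is the torus-invariant form ${\rm dlog}\,a\wedge{\rm dlog}\,b\wedge{\rm dlog}\,c\wedge{\rm dlog}\,d$, which is regular and nonvanishing on the torus. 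On the smooth toric variety $X''$ this form has a simple pole along every toric boundary divisor and no other poles. In particular it is regular along the hypersurfaces $V(g_i)$ from \eqref{eq:g11}, so it is logarithmic for the pair $(X'',Y'')$.

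Next I compute the residues along the facets of $\overline S$. These are the $15$ toric divisors $\{u_i=0\}$, one for each column of \eqref{eq:4by15}. The residue of ${\rm dlog}\,a\wedge\cdots\wedge{\rm dlog}\,d$ along the toric divisor of a primitive ray $v_i$ is $\pm$ the standard invariant $3$-form ${\rm dlog}\wedge{\rm dlog}\wedge{\rm dlog}$ on the boundary torus $(\C^*)^3$, written in any lattice basis of $\Z^4/\Z v_i$. The facet $\overline S\cap\{u_i=0\}$ is the closure of the positive part of that boundary torus. By Corollary \ref{cor:curvypezzo}, it is a curvy associahedron or a curvy cube, which is the nonnegative part of the induced toric pair. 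We therefore need the canonical form of this facet for the restricted pair to be its torus-invariant form. I would prove this by the same recursion, with an induction on dimension. In every dimension the chain of residues terminates at vertices, where the residue is $\pm1$, since the fan of the pezzotope is smooth (Theorem \ref{thm:pezzotopeispolytope}). Dimension one is Example \ref{ex:canformlinesegment} in toric form: on $\PP^1$ the invariant form ${\rm d}t/t$ is the canonical form of $[0,\infty]$.

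Finally I invoke uniqueness. The genus zero property from Theorem \ref{thm:cr150} carries over to $(X'',Y'')$ by modification invariance \cite[\S 3.2.1]{BD}, since all three compactifications share the same open part. With this in hand, \cite[Proposition 2.15]{BD} shows that a logarithmic form whose residues along the boundary components meeting $\sigma''$ equal the canonical forms of the facets, and whose residues along the remaining components $V(g_i)$ vanish, must be $\omega(\sigma'')$. The vanishing along $V(g_i)$ holds because $\omega$ has no pole there. The real geometry is also consistent: $\overline S$ does not meet these hypersurfaces, because every $g_i$ is positive on $\R^4_{\ge 0}$.

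The main obstacle I expect is the bookkeeping in applying \cite[Proposition 2.15]{BD}. Its hypotheses have to be verified with signs and orientations handled consistently across all $15$ facets. In addition, the inductive statement about canonical forms of facets must be set up inside the correct restricted pairs, namely the pairs obtained by restricting $(X'',Y'')$ to each toric divisor, which include the traces of the $V(g_i)$. The first thing I would try is to show that these traces never meet the nonnegative part. Once that holds, each facet pair reduces to a toric pair whose canonical form is known, and the induction closes.
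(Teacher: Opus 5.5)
Your argument is correct, and its skeleton matches the paper's: show $\omega$ is logarithmic, compute its residues along the $15$ toric divisors, and recurse down to line segments via \cite[Proposition 2.15]{BD}. Two steps are organized differently.

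First, the paper does not keep the hypersurfaces $V(g_i)$. It invokes \cite[\S 2.3.2]{BD} at the outset and computes $\omega(S)$ for the pair $(X'',D)$, where $D$ is the simple normal crossing toric boundary. The $V(g_i)$ then never enter, and every boundary pair in the recursion is a smooth projective toric variety with its toric boundary. You keep $Y''$, so you must track vanishing residues along $V(g_i)$ and their traces on every facet. Your step ``each facet pair reduces to a toric pair'' is an instance of that same reduction principle, now applied at every level of the recursion. It is more economical to use it once at the top, and that removes the obstacle you flag in your last paragraph.

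A small correction here: positivity of $g_i$ on $\R^4_{\ge 0}$ only controls the affine chart $\C^4_{a,b,c,d}$. That is the chart of the vertex cone spanned by the rays of $u_1,u_6,u_{10},u_{11}$, not all of $\overline S$. The clean reason the closures of the $V(g_i)$ miss $\overline S=U_{\ge 0}$ is that $g_i^h(u)-1$ lies in the ideal of $U$, so $g_i^h\equiv 1$ on $U$.

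Second, the paper computes the residues explicitly in Cox coordinates along $u_1=0$ and $u_{11}=0$ using the $u$-equations. It matches them with known canonical forms: the worldsheet associahedron of \cite{ABHY}, the $3$-cube in $\PP^1\times\PP^1\times\PP^1$, and then the quadrilateral and pentagon of Examples \ref{ex:quadint} and \ref{ex:pentint}. You instead use a general fact: the residue of the torus-invariant form along the divisor of a primitive ray is $\pm$ the invariant form of the quotient torus. You combine this with an induction over smooth projective toric varieties.

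Your route is uniform over all $15$ facets, with orientations automatically consistent because charts differ by unimodular monomial maps. It in fact proves the general statement that the nonnegative part of any smooth projective toric variety has the invariant ${\rm dlog}$ form as its canonical form. The paper's route is case-by-case, but it produces explicit residue formulas that tie each facet to its own $u$-equations and to positive geometries familiar from physics.
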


\begin{proof}
By \cite[\S 2.3.2]{BD}, we may instead compute the canonical form of $S$ viewed as a region of $(X'',D)$, where $D = D_1 \cup \cdots \cup D_{15} \subset Y''$ is the toric boundary of $X''$. Note that, as $X''$ is smooth, the divisor $D$ is simple normal crossing, and each irreducible component $D_i$ intersects the positive chart $U \subset X''$. We may therefore compute all residues in $u$-coordinates.

By \cite[Proposition 2.15]{BD}, we need to show that the residues along the divisors $D_i$ are the canonical forms of the curvy facets of our pezzotope $U_{\geq 0}$. This leads to a recursion.

The pezzotope has two types of facets. Each facet labeled by
$\{u_1,\ldots,u_{10}\}$ is an associahedron, so it has
the f-vector $(14,21,9)$.  Each facet labeled by
$\{u_{11},\ldots,u_{15}\}$ is a  regular $3$-cube, with
f-vector $(8,12,6)$.
These represent familiar positive geometries of
dimension $3$.

We first consider the residue of $\omega$ along the associahedral boundary
$D_1$. On that boundary we have
$u_1 = 0$ and $u_2 = u_5 = u_7 = u_{13} = u_{15} = 1$, as seen in (\ref{eq:perfectuE6}). Therefore the residue~is
			\begin{equation}
				\label{eq:u1iszero}
				{\rm Res}_{u_1=0}\,\omega \,\,  = \,\,
				{\rm dlog}  \left(\frac{u_{10}}{u_8 u_9 u_{14}}\right)\wedge {\rm dlog}  \left(\frac{u_9 u_{11}}{u_4 u_{12}}\right)\wedge {\rm dlog}  \left(\frac{u_4 u_6 u_{14}}{u_3}\right).
			\end{equation}
			This coincides with the expression for the canonical form of the \emph{worldsheet associahedron} \cite[page 11]{ABHY}, after relabeling.  The $u$-variables
			occurring in (\ref{eq:u1iszero}) satisfy the 
			$u$-equations 
			\begin{eqnarray*}
				&&u_{10}+u_8 u_9 u_{14}=u_{11}+u_4 u_8 u_{12} u_{14}=u_6+u_3 u_8 u_{12}=u_9+u_4 u_{10} u_{12}=u_3 u_{10} u_{11} u_{12}+u_{14}\\
				&&= \, u_8+u_6 u_{10} u_{11\,}=\,u_4+u_3 u_9 u_{11} \,=\,u_{12}+u_6 u_9 u_{11} u_{14}\,=\,u_3+u_4 u_6 u_{14}\,=\,1,
			\end{eqnarray*}
			obtained from (\ref{eq:perfectuE6}) by setting $u_1=0$.
			Their solutions in $(\C^*)^9$ form $\mathcal{M}_{0,6}$, and the closure in $\mathbb{C}^9$ is a partial compactification $U_1 = U \cap \{u_1 = 0 \} \subset D_1$. The solutions in $\mathbb{R}^9_{\geq 0}$ form a curvy three-dimensional associahedron $S_1$ inside $(D_1)_\R$. We must show that \eqref{eq:u1iszero} is its canonical form with respect to the pair $(D_1, D_1 \cap (\cup_{i = 2}^{15} D_i))$. The recursive nature is seen from the fact that this is again a smooth toric variety $D_1$ with its toric boundary.
The nine rays of its fan are read from the monomials in \eqref{eq:u1iszero}. 
Taking residues of \eqref{eq:u1iszero} once again, we find six toric surfaces 
of pentagons and three of quadrilaterals---the nine facets of the associahedron. 
These forms appear under the integral sign in Examples \ref{ex:quadint} 
and \ref{ex:pentint}. Finally, we are down to line segments, 
and checking that we obtain the form from Example \ref{ex:canformlinesegment} concludes the proof.

For completeness, we also consider a cube facet $D_{11}$. The residue of $\omega$ at $\{u_{11}=0\}$ is 
\begin{equation}
				\label{eq:u11iszero}
 {\rm Res}_{u_{11} = 0}\, \omega \,\, = \,\,
			{\rm dlog} \left(\frac{u_{10}}{u_9}\right)\,\wedge \, {\rm dlog}  \left(\frac{u_6}{u_3}\right) \, \wedge \, {\rm dlog}  \left(\frac{u_1}{u_2}\right).
\end{equation}		
			This is the canonical form of $S_{11} = U_{\geq 0} \cap D_{11}$ with respect to $D_{11} \simeq \PP^1 \times \PP^1 \times \PP^1$ and its toric boundary.
			From (\ref{eq:perfectuE6}) we~get $u_1+u_2 \,=\, u_3+u_6 \,=\, u_9+u_{10} \,=\, 1$.
			The $3$-cube is the set of non-negative solutions to these 
			three equations in six variables. Taking iterated residues gives quadrilaterals and line segments, 
			and it proves that \eqref{eq:u11iszero} equals $\omega(S_{11})$.
\end{proof}

In Step~6 of the Workflow \ref{workflow} it is suggested that we
consider other regions of interest. For this, we take advantage of the symmetry explained in Section \ref{sec9}. Each element $g$ of the Weyl group $W({\rm E}_6)$ induces an isomorphism $f_g : {\cal Y}(3,6) \rightarrow {\cal Y}(3,6)$. For instance, we computed in \eqref{eq:g25} that in the coordinates $x,y,z,w$ from \eqref{eq:sixpoints8}, the permutation $g_{2,5}$ induces 
\begin{equation} \label{eq:fg25} f_{g_{2,5}}(x,y,z,w) \, = \, \Big
 (\, \frac{x-xz }{x-z}\,,\, \frac{xzw - xw - yz^2 + yz}{xz-xw-z^2+zw}\,, \,1-z\,, \,\frac{zw-w}{z-w} \,\Big). \end{equation}
The canonical form has the functorial property that $\omega(f_g(S)) = (f_g)_* \,  \omega(S)$. That is, the canonical form of the image of $S$ is the pushforward of the canonical form of $S$ \cite[\S 2.3.4]{BD}. Equivalently, using the action of $W({\rm E}_6)$ on the regions ${\cal R}$ of ${\cal Y}(3,6)$ from Lemma \ref{lem:Weylaction} and the fact that $f_g^{-1} = f_{g^{-1}}$, we have $\omega(g \cdot S) = f_{g^{-1}}^* \, \omega(S)$. For example, the form \eqref{eq:omega_abcd} equals
\begin{equation} \label{eq:omega_xyzw}  \frac{z \, {\rm d} x \wedge {\rm d} y \wedge {\rm d} z \wedge {\rm d} w}{(x-z)(z-w)\left(xyz-xyw-xzw+xw+yzw-yz\right)} \, = \, h(x,y,z,w) \, {\rm d} x \wedge {\rm d} y \wedge {\rm d} z \wedge {\rm d} w.\end{equation}
The pullback along $f_{g_{2,5}}$, writing $(\phi_1, \ldots, \phi_4)$ for the righthand side in \eqref{eq:fg25}, is 
\[ f_{g_{2,5}}^* \omega \, = \, h(\phi_1, \phi_2,\phi_3, \phi_4) \, {\rm d} \phi_1 \wedge {\rm d} \phi_2 \wedge {\rm d} \phi_3 \wedge {\rm d} \phi_4 \, = \, h(\phi_1, \phi_2,\phi_3, \phi_4) \, (\det J) \, {\rm d} x \wedge {\rm d} y \wedge {\rm d} z \wedge {\rm d} w, \] 
where $\det J$ is the $4 \times 4$ Jacobian determinant of \eqref{eq:fg25}.
 Since $W({\rm E}_6)$ acts transitively on ${\cal R}$, we obtain the canonical form of each of the 432 regions in this way. By Lemma \ref{lem:Weylaction}, each form is found $120$ times. The sign of $f_{g^{-1}}^* \, \omega(S)$ depends on whether $f_g$ is orientation~preserving.

\begin{cor}  \label{cor:elfzwei}
Using the $W({\rm E}_6)$ action as described above, we obtain
the canonical forms $\omega( g \cdot S)$ for all $432$ pezzotopes. 
These span the $150$-dimensional vector space
$\Omega^4_{\rm log} (X \backslash Y)$.
\end{cor}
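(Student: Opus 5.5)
The plan has two parts. First we show that every $\omega(g\cdot S)$ lies in $\Omega^4_{\rm log}(X\backslash Y)$. Then we show that these forms span a space of dimension $150$, which by Theorem~\ref{thm:cr150} is all of $\Omega^4_{\rm log}(X\backslash Y)$.

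For the first part, we use that each $g \in W({\rm E}_6)$ acts on $X$ by permuting the $40$ coordinates of $\PP^{39}$. Hence $f_g$ is an automorphism of the pair $(X,Y)$, and pullback $f_{g^{-1}}^*$ preserves $\Omega^4_{\rm log}(X\backslash Y)$. By the proposition just proved, $\omega(S)=\omega$ in \eqref{eq:omega_abcd} is the canonical form of the pezzotope. Functoriality \cite[\S 2.3.4]{BD} then gives $\omega(g\cdot S)=f_{g^{-1}}^*\omega(S)$, which is again logarithmic and is the Brown--Dupont image of the region $g\cdot S$. Lemma~\ref{lem:Weylaction} says the action on $\mathcal{R}$ is transitive with stabilizers of order $120$. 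So if we run over coset representatives of ${\rm Stab}(S)$, for instance words in the generators $g_{i,j}$ and $g_C$ found by a breadth-first search on the $432$ sign patterns of Lemma~\ref{lem:864}, we obtain each of the $432$ regions exactly once. For each region we pull back the form \eqref{eq:omega_xyzw} explicitly in the coordinates $(x,y,z,w)$, using the rational maps such as \eqref{eq:fg25} and $g_C:(x,y,z,w)\mapsto(x^{-1},\dots,w^{-1})$ together with their Jacobians.

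For the spanning statement, we write each of the $432$ forms as $h_R(x,y,z,w)\,{\rm d}x\wedge{\rm d}y\wedge{\rm d}z\wedge{\rm d}w$. All $h_R$ have denominators dividing a fixed product of the $10$ nonconstant minors of $\tilde P$ and $q$, with bounded exponents (squared factors may occur, as in \eqref{eq:degree51}). We clear this common denominator and express the numerators in a monomial basis. The rank of the resulting $432\times N$ coefficient matrix over $\Q$ is at most $150$, because all the forms lie in a space of that dimension. A computer algebra computation, with exact rational arithmetic or several random prime moduli as a check, should certify rank exactly $150$. This gives ${\rm cr}_\R(X,Y)=150$, consistent with the theoretical argument in the proof of Theorem~\ref{thm:cr150}.

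As a cross-check, or as an alternative that avoids the full $432$-row matrix, we could exploit the residue map to the boundary. The restrictions along the $36$ boundary components land in the $24$-dimensional spaces of Proposition~\ref{prop:segrecubic}, and independence could be checked componentwise. The main obstacle is practical: the pullbacks under long words in $W({\rm E}_6)$ produce large rational functions, and we must keep track of the sign of orientation for each coset representative. To control this, we would instead work in the $d$-coordinates of \eqref{eq:dimatrix}, where $W({\rm E}_6)$ acts linearly. There each form becomes a rational function whose denominator divides \eqref{eq:degree51}, and pullback reduces to a linear change of variables times a determinant $\pm1$. That makes the rank computation a linear algebra problem over $\Q$ in degree-$27$ polynomials.
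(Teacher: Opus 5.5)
Your first half matches the paper's argument. $W({\rm E}_6)$ permutes the $40$ coordinates of $\PP^{39}$, so each $f_g$ is an automorphism of $(X,Y)$. Functoriality gives $\omega(g\cdot S)=f_{g^{-1}}^*\omega(S)$ up to an orientation sign. Lemma~\ref{lem:Weylaction} then shows that the translates $g\cdot S$ are exactly the $432$ regions.

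For the spanning claim you take a different and much heavier route: an exact rank computation on the $432$ explicitly pulled-back forms. In the paper this step needs no computation. By Definition~\ref{def:realcombinatorialrank}, ${\rm cr}_\R(X,Y)$ is the dimension of the span of the canonical forms of all regions. Once the regions are known to be the $g\cdot S$, the corollary is just ${\rm cr}_\R(X,Y)={\rm cr}(X,Y)=150$ from Theorem~\ref{thm:cr150}. That equality is proved there conceptually. The $W({\rm E}_6)$-orbit of the positive tropical cycle $\Delta$ generates the top homology of the Naruki complex. Hence the translated pezzotopes generate $H_4(X',Y')=H_4(X,Y)$, and the Brown--Dupont map surjects onto $\Omega^4_{\rm log}(X\backslash Y)$. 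The paper's computational confirmation (Corollary~\ref{cor:elfdrei}) also avoids pulling back along long words in $W({\rm E}_6)$. It uses instead the $S_6$-orbits of the four Pl\"ucker-coordinate forms \eqref{eq:canonicalfunction}--\eqref{eq:again432}.

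Your route buys a certificate independent of the tropical and homological input. Rank $150$ modulo a single prime already gives the lower bound, and the upper bound is free. The price is the expression swell you anticipate.

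Two details if you carry it out. First, in the coordinates $(x,y,z,w)$ the denominators must also allow the factors $x,y,z,w$ themselves. These are the minors $p_{235},p_{236},p_{135},p_{136}$; for instance $p_{136}p_{235}=wx$ divides the last denominator in \eqref{eq:canonicalfunction}. Second, your $d$-coordinate shortcut needs an argument you have not given. The map $\PP^5\dashrightarrow\mathcal{Y}(3,6)$ has one-dimensional fibres, so the $4$-forms are not pullbacks of top forms on $d$-space. How the Cremona reflection acts on the degree $-24$ coefficient functions must therefore be derived, not assumed to be a substitution times $\det=\pm1$.
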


\smallskip

Artyom Lisitsyn brought the idea of using symmetry for computing canonical forms to our attention. By using oriented matroid theory, he realized that we may find all $432$ forms from four $S_6$-orbits of size $720$, rather than one $W({\rm E}_6)$-orbit of size $51840$. This simplifies the computation significantly. He kindly agreed to let us explain this computation in our~article.

The point of departure is the
moduli space $\mathcal{X}(3,6)$ of six points in linearly
general position in $\PP^2$. Removing the coconic locus $\{ q = 0 \}$ from ${\cal X}(3,6)$, we recover our moduli space ${\cal Y}(3,6)$.
The real locus $\mathcal{X}(3,6)_\R$ has $372 = 60+180+ 120+12$~regions,
 which come in four orbits under the symmetric group $S_6$.
 These are the four reorientation classes of uniform oriented matroids of rank $3$ on six elements.
 A positive parametrization for each region was found by Antolini and Early \cite[Theorem 6.1]{AE}.
We shall now list four canonical forms found in the proof of their theorem. We work in coordinates \eqref{eq:sixpoints8} and let $\eta = {\rm d} x \wedge {\rm d}y \wedge {\rm d}z \wedge {\rm d} w$:
\begin{equation}
\label{eq:canonicalfunction}
 \begin{matrix} (p_{123} p_{234} p_{345} p_{456} p_{156} p_{126})^{-1} \, \eta & 
(p_{123} p_{126} p_{145} p_{234} p_{356} p_{456})^{-1} \, \eta \\
 p_{245} (p_{124} p_{136} p_{145} p_{234} p_{235} p_{256} p_{456})^{-1} \, \eta  \,\,\,& \quad
 q   (p_{125} p_{126} p_{134} p_{136} p_{145} p_{234} p_{235} p_{246} p_{356} p_{456})^{-1}  \, \eta.\\
\end{matrix}
\end{equation}
The first expression is the {\em Parke-Taylor form} which is prominent in particle physics;
see \cite[Section~7]{EGPSY} and \cite{HP}.  The $p_{ijk}$ are polynomials in $(x,y,z,w)$ given by the $(i,j,k)$ minor of \eqref{eq:sixpoints8}.
Note that the last denominator matches the $10$ triangles in \cite[Figure 3 (a)]{EGPSY}.
By acting with $S_6$, we obtain $372$ rational forms in $(x,y,z,w)$. 
Lisitsyn computed the dimension of the linear span of these $372$ forms by evaluating in many sample points. He found it to be
\begin{equation}
\label{eq:126altsum}
 126 \,\,=\,\,1035\,-\,1395 \,+\, 550 \,-\, 65 \,+\, 1 .
 \end{equation}
These are the face numbers of the tropical Grassmannian  ${\rm trop}({\rm Gr}(3,6))$.
The Euler characteristic $126$ was first reported in \cite[Theorem 5.4]{speyer},
well over two decades ago. Only now do we understand its meaning: {\em
$126$ is the real combinatorial rank of the moduli space $\mathcal{X}(3,6)$.}

\smallskip

We finally make the situation more symmetric, by 
replacing $S_6= W({\rm A}_5)$ with $W({\rm E}_6)$.
The coconic divisor $\{q=0\}$ cuts the 
positive region of $\mathcal{X}(3,6)$  into two pieces.
By \cite[equation (27)]{EGPSY}, this replaces the
Parke-Taylor form in (\ref{eq:canonicalfunction}) with the new rational form
\begin{equation}
\label{eq:again432}
 p_{135} \cdot ( \,p_{123} p_{345} p_{156} \, q\, )^{-1} \, \eta. 
 \end{equation}
The resulting form is equal to \eqref{eq:omega_abcd} and \eqref{eq:omega_xyzw}. The last three forms in (\ref{eq:canonicalfunction}) remain unchanged.

Up to sign, the $S_6$-orbits of (\ref{eq:again432})  and the last three forms in
(\ref{eq:canonicalfunction}) have combined cardinality
$$ 432 \,=\, 120+180+120+12. $$
This verifies Corollary \ref{cor:432}.
The $S_6$-orbit gives 
canonical forms for all pezzotopes in $\mathcal{Y}(3,6)_\R$. 

Lisitsyn computed 
the dimension of the linear span of all functions in (\ref{eq:canonicalfunction}) and (\ref{eq:again432}). 
He found it to be $150$.
This confirms (\ref{eq:150}), and it elucidates the transition from
(\ref{eq:126altsum}) to (\ref{eq:150altsum}).

\begin{cor} \label{cor:elfdrei}
Using the $S_6$ action on (\ref{eq:canonicalfunction})--(\ref{eq:again432}) as described above,
 we obtain~the canonical forms for all $432$ pezzotopes. 
These span the $150$-dimensional vector space
$\Omega^4_{\rm log} (X \backslash Y)$.
\end{cor}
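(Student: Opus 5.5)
The plan is to combine the transitivity of the $S_6$-action on the regions with a dimension count inside $\Omega^4_{\rm log}(X\backslash Y)$.

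First, I would identify which region each of the four seed forms belongs to. The three forms in the last two slots of (\ref{eq:canonicalfunction}), together with (\ref{eq:again432}), should be canonical forms of four regions of ${\cal Y}(3,6)_\R$.
\begin{itemize}
\item The form (\ref{eq:again432}) equals (\ref{eq:omega_abcd}) and hence (\ref{eq:omega_xyzw}). This is a direct substitution of the matrix (\ref{eq:sixpoints7}) into the minors, so (\ref{eq:again432}) is $\omega(S)$ for the positive pezzotope by the Proposition proved above.
\item Each of the other three forms is the canonical form of a region of ${\cal X}(3,6)_\R$ by \cite[Theorem 6.1]{AE}. That region is not cut by the coconic divisor $\{q=0\}$. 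I would check this by confirming that $q$ has constant sign on the region, using the positive parametrization of Antolini--Early. It follows that the region is also a region of ${\cal Y}(3,6)_\R$, and its canonical form is unchanged.
\end{itemize}

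Second, I would argue that the four $S_6$-orbits exhaust all $432$ regions. The $S_6$-action on regions is the restriction of the $W({\rm E}_6)$-action from Lemma \ref{lem:Weylaction}. Canonical forms transform by pullback, $\omega(g\cdot S)=f_{g^{-1}}^*\omega(S)$, so each orbit of forms is, up to sign, exactly the set of canonical forms of the corresponding orbit of regions. The orbit sizes are $120,180,120,12$.
\begin{itemize}
\item The sizes $180$, $120$ and $12$ come from the reorientation classes in \cite{AE}.
\item The size $120$ for the pezzotope orbit comes from the $60$ positive-type regions of ${\cal X}(3,6)_\R$, each split in two by $q$.
\end{itemize}
To see that the orbits are disjoint, I would observe that the forms in different orbits have different pole sets, or equivalently that the regions have different sign vectors in $\{\pm1\}^{21}$ for the $20$ minors and $q$. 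The orbits then give $432$ distinct regions, which by Corollary \ref{cor:432} are all of them. This recovers Corollary \ref{cor:elfzwei}'s list of forms.

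Third, I would establish the spanning claim. The span of the $432$ forms lies in $\Omega^4_{\rm log}(X\backslash Y)$, which has dimension $150$ by Theorem \ref{thm:cr150}. Theorem \ref{thm:cr150} also gives ${\rm cr}_\R={\rm cr}$, so the canonical forms of all regions span the whole space. Since we have exhibited the canonical forms of all regions, the spanning statement follows. As an independent check, matching Lisitsyn's computation, I would evaluate the $432$ rational functions of $(x,y,z,w)$ at enough random rational points and verify that the resulting matrix has rank $150$.

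The main obstacle is the first step for the three non-pezzotope orbits. One must certify that $q$ does not vanish on those regions, so that they really are regions of ${\cal Y}(3,6)_\R$, and that the Antolini--Early forms remain canonical for the larger arrangement that includes the $36$ ${\rm E}_6$ divisors. I would handle the second point with \cite[Proposition 2.15]{BD}: a form with no pole along a divisor disjoint from the region keeps the correct residues along all facets, so the same recursion applies.
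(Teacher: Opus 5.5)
Your proposal is correct, and its first two steps match the paper's. The seeds are the three Antolini--Early forms together with (\ref{eq:again432}), which is identified with $\omega(S)$ through (\ref{eq:omega_abcd}). The orbit sizes $120+180+120+12=432$ are then matched against Corollary~\ref{cor:432}.

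The genuine difference is the spanning step. You deduce it from ${\rm cr}_\R={\rm cr}=150$ in Theorem~\ref{thm:cr150}. In the paper, this corollary rests on Lisitsyn's rank computation, which the proof of Theorem~\ref{thm:cr150} explicitly announces as the computational proof of ${\rm cr}_\R={\rm cr}$. Your route is shorter and avoids sampling. However, it inherits the brief theoretical argument given there: that the $W({\rm E}_6)$-orbit of $\Delta$ spans the top homology of the Naruki complex, with each class carried by a pezzotope. It also means the corollary no longer confirms (\ref{eq:150}) independently. The paper's route is self-contained: the $432$ forms lie in $\Omega^4_{\rm log}(X\backslash Y)$, evaluation at sample points gives rank at least $150$, and (\ref{eq:150altsum}) gives the matching upper bound. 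Your ``independent check'' is therefore the paper's actual proof, and it deserves to be the main argument.

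Two details should be tightened. First, ``$60$ positive regions, each split in two'' does not by itself give a single $S_6$-orbit of size $120$. You need an element of the dihedral stabilizer of a positive region that exchanges the halves $q>0$ and $q<0$; otherwise (\ref{eq:again432}) would only produce $60$ forms. The $6$-cycle works. In rank $3$ it preserves the positivity of all minors. The factorization $q=(\sum_i d_i)\prod_{i<j}(d_i-d_j)$ in the coordinates (\ref{eq:dimatrix}) shows $q(P\pi)={\rm sgn}(\pi)\,q(P)$, and ${\rm sgn}=-1$ for a $6$-cycle. Disjointness of the four orbits then needs no pole-set comparison, since the seeds lie over the four distinct $S_6$-orbits of ${\cal X}(3,6)_\R$.

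Second, for the other three seeds the sign check on $q$ is unnecessary. Six real points on a smooth conic are in convex position, so $\{q=0\}$ meets only positive-type regions. Also, do not rerun \cite[Proposition~2.15]{BD} in the Yoshida compactification. There every region is a pezzotope, so the closure of the region, and hence the list of facets to check, need not be the one used by Antolini--Early. Instead, use two facts:
\begin{itemize}
\item enlarging the boundary by the coconic divisor does not change the canonical form of a cycle already relative to the smaller boundary \cite[\S 2.3.2]{BD};
\item canonical forms are independent of the compactification, via $H_4^{\rm lf}$, as recalled at the start of Section~\ref{sec11}.
\end{itemize}
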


Corollaries \ref{cor:elfzwei} and \ref{cor:elfdrei}
complete the Workflow \ref{workflow}
for cubic surfaces and their moduli space.
Our article can be a blueprint for the study of positive geometries
in similar settings.

\newpage

\noindent {\bf Acknowledgements}.
We thank Cl\'ement Dupont, Ulysse Mounoud and Dmitrii Pavlov for helpful communications.
Thomas Endler created several of the diagrams for this article.
This project grew out of the lecture by the first author
at the DMV Topic Days (September 2025) on the 
occasion of the 100th anniversary of Felix Klein's death.
We are grateful to the organizers.
 Our work on this article was supported by
the European Research Council through the synergy grant UNIVERSE+, 101118787. 
\begin{scriptsize}Views~and~opinions expressed
are however those of the authors only and do not necessarily reflect those of the European Union or the 
European
Research Council Executive Agency. Neither the European Union nor the granting authority
can be held responsible for them.
\end{scriptsize}

\bigskip
\bigskip
		
\footnotesize
\noindent {\bf Authors' addresses:}
		
\smallskip
		
\noindent Simon Telen, MPI-MiS Leipzig \hfill \url{simon.telen@mis.mpg.de}
				
\noindent  Bernd Sturmfels, MPI-MiS Leipzig   \hfill \url{bernd@mis.mpg.de}
\end{document}